\documentclass[11pt]{article}
\usepackage{amsthm, amsmath, amssymb, amsfonts, url, booktabs, tikz, setspace, fancyhdr, amsbsy}
\usepackage[margin = 1in]{geometry}
\usepackage{esint}
\usepackage{verbatim}


\newtheorem{theorem}{Theorem}[section]
\newtheorem{proposition}[theorem]{Proposition}
\newtheorem{lemma}[theorem]{Lemma}
\newtheorem{corollary}[theorem]{Corollary}

\theoremstyle{definition}
\newtheorem{definition}[theorem]{Definition}

\theoremstyle{remark}
\newtheorem*{remark}{Remark}




\newcommand{\norm}[1]{\left\lVert#1\right\rVert}


\newcommand{\R}{\mathbb{R}}
\newcommand{\V}{\mathbb{V}}
\newcommand{\Q}{\mathbb{Q}}
\newcommand{\Z}{\mathbb{Z}}
\newcommand{\defeq}{\mathrel{\mathop:}=}
\newcommand{\eqdef}{\mathrel{\mathop=}:}

\newcommand{\U}{\mathcal{U}}
\newcommand{\G}{\mathcal{G}}
\newcommand{\uu}{\boldsymbol{u}}
\newcommand{\DD}{\boldsymbol{D}}
\newcommand{\UU}{\boldsymbol{U}}
\newcommand{\VV}{\boldsymbol{V}}
\newcommand{\vv}{\boldsymbol{v}}
\newcommand{\SSS}{\boldsymbol{S}}
\newcommand{\q}{\boldsymbol{q}}
\newcommand{\ww}{\boldsymbol{w}}
\newcommand{\hh}{\boldsymbol{h}}
\newcommand{\dx}{\,\mathrm{d}x}
\newcommand{\dy}{\,\mathrm{d}y}
\numberwithin{equation}{section}

\def\ocirc#1{\ifmmode\setbox0=\hbox{$#1$}\dimen0=\ht0
    \advance\dimen0 by1pt\rlap{\hbox to\wd0{\hss\raise\dimen0
    \hbox{\hskip.2em$\scriptscriptstyle\circ$}\hss}}#1\else
    {\accent"17 #1}\fi}

\begin{document}

\title{Finite element approximation of an incompressible chemically reacting non-Newtonian fluid}

\author{Seungchan Ko\thanks{Mathematical Institute, University of Oxford, Andrew Wiles Building, Woodstock Road, Oxford OX2 6GG, UK. Email: \tt{seungchan.ko@maths.ox.ac.uk}} , ~Petra Pust{\v{e}}jovsk{\'a}\thanks{TU Munich, Chair of Numerical Mathematics, Boltzmannstrasse 3, D-85748 Garching b. M\"unchen, Germany. Email: \tt{petra.pustejovska@gmail.com}}
,~and ~Endre S\"uli\thanks{Mathematical Institute, University of Oxford, Andrew Wiles Building, Woodstock Road, Oxford OX2 6GG, UK. Email: \tt{endre.suli@maths.ox.ac.uk}}}

\date{~}

\maketitle

~\vspace{-1.5cm}

\begin{abstract}
We consider a system of nonlinear partial differential equations modelling the steady motion of an incompressible non-Newtonian fluid,
which is chemically reacting. The governing system consists of a steady convection-diffusion equation for the concentration and the
generalized steady Navier--Stokes equations, {\color{black}{where the viscosity coefficient is a power-law type function of the shear-rate, and the coupling between the equations results from the concentration-dependence of the power-law index.}}
This system of nonlinear partial differential equations arises in mathematical models of the synovial fluid found in the cavities of moving joints. We construct a finite element approximation of the model and perform the mathematical analysis of the numerical method in the case of two space dimensions. Key technical tools include discrete counterparts of the Bogovski\u{\i} operator, De Giorgi's regularity theorem in two dimensions, and the Acerbi--Fusco Lipschitz truncation of Sobolev functions, in function spaces with variable integrability exponents.
\end{abstract}

\smallskip

\noindent{\textbf{Keywords:} Non-Newtonian fluid, variable power-law index, synovial fluid, finite element method}

\smallskip

\noindent{\textbf{AMS Classification:} 65N30, 74S05, 76A05}

\begin{section}{Introduction}
During the past decade the mathematical study of non-Newtonian fluids has become an active field of research, stimulated by
the wide range of scientific and industrial problems in which they arise. Examples of non-Newtonian fluids include biological
fluids (such as mucus, blood, and various polymeric solutions), as well as numerous fluids of significance in engineering,
food industry, cosmetics, and agriculture.
{\color{black}{In this paper, we
shall investigate a system of nonlinear partial differential
equations (PDEs) modelling the motion of the synovial fluid (a biological fluid found in the cavities of moving joints)
in a steady shear experiment.}} From the rheological viewpoint, the synovial fluid consists of ultrafiltrated blood plasma diluting
a particular polysaccharide,  called \textit{hyaluronan}.
Though one could model the solution using mixture theory, we shall restrict ourselves to the situation where the solution
can be described as a single-constituent fluid. This perspective is fairly reasonable because the mass concentration of
hyaluronan is negligible, and even if molecules of hyaluronan are accumulated locally, the mass concentration does not exceed 2\%.
Nevertheless, {\color{black}{we still need to consider the experimentally observed
chemical properties of the fluid. In fact, it was already observed in viscosimetric experiments performed in the early 1950s that the synovial
fluid has a strong shear-thinning property, depending on the concentration of hyaluronan in the solution.
Explicitly, the viscosity of the fluid is a function of the concentration as well as of the shear rate.
Therefore, from the viewpoint of mathematical modelling a power-law-like model, where the power-law index is concentration-dependent,
seems reasonable.}}

Denoting by $c$ the concentration of hyaluronan in the solution and by $\boldsymbol{D}\uu:=\frac{1}{2}(\nabla \uu
+ (\nabla\uu)^{\rm T})$, the symmetric gradient of the velocity field $\uu$, it was observed in laboratory experiments
(see \cite{exp}) that {\color{black}{the effect of concentration and the shear rate on the viscosity are not separated
(as, for instance, $\nu(c,|\boldsymbol{D}\uu|^2)\sim f(c)\tilde{\nu}(|\boldsymbol{D}\uu|^2)$), but that the  concentration of hyaluronan affects the level of
shear thinning. For zero concentration, the viscosity becomes constant,
corresponding to the fact that the fluid is composed only of ultrafiltrated blood plasma,
exhibiting properties of a Newtonian fluid. If the concentration of hyaluronan increases,
the fluid displays higher apparent viscosity and, in fact, it thins the shear more markedly. Therefore a new power-law-like model
of the synovial fluid was proposed in \cite{exp2}, where the power-law index was considered to be a function of the concentration.
This new model describes the viscous properties of the synovial fluid more accurately,}} and it naturally reflects the fact that non-Newtonian effects diminish as the concentration of hyaluronan decreases.

{\color{black}{Based on the discussion above, we shall investigate a system of equations describing the motion of a
shear-thinning fluid with a}} nonstandard growth condition on the viscosity. More precisely, we shall consider
the incompressible generalized Navier--Stokes equations with a power-law-like viscosity where the power-law index
is not fixed, but depends on the concentration. To close the system, {\color{black}{we shall assume that the concentration satisfies a convection-diffusion equation. The resulting system of partial differential
equations is therefore fully coupled.}}

{\color{black}{In other words, we consider the following system of PDEs:}}
\begin{alignat}{2}
\text{div}\,{\uu}&=0\qquad &&\mbox{in $\Omega$},\label{eq1}\\
\text{div}\,(\uu\otimes \uu)-\text{div}\,\SSS(c,\DD\uu)&=-\nabla p+\boldsymbol{f}\qquad &&\mbox{in $\Omega$},\label{eq2}\\
\text{div}\,(c\uu)-\text{div}\,\q_c(c,\nabla c,\DD\uu)&=0\qquad &&\mbox{in $\Omega$},\label{eq3}
\end{alignat}
in a bounded open Lipschitz domain $\Omega\subset\R^d$, $d \in \{2,3\}$, where $\uu:\overline{\Omega}\rightarrow\R^d$, $p:\Omega\rightarrow\R$, $c:\overline{\Omega}\rightarrow\R_{\geq0}$ are the velocity, pressure and concentration fields, respectively. In the present context, $\boldsymbol{f}:\Omega\rightarrow\R^d$ denotes a given external force, $\DD\uu$ denotes the symmetric velocity gradient, i.e., $\DD\uu=\frac{1}{2}(\nabla \uu+(\nabla \uu)^{\rm T})$, and $\SSS(c,\DD\uu)$ and $\q_c(c,\nabla c,\DD\uu)$ are the extra stress tensor and the diffusive flux respectively. To complete the problem, we prescribe the following Dirichlet boundary conditions
\begin{equation}\label{bc}
\uu=\boldsymbol{0},\qquad c=c_d\qquad\text{on}\,\,\partial\Omega,
\end{equation}
where $c_d\in W^{1,q}(\Omega)$ for some $q>d$ and $c_d\geq0$ a.e. on $\Omega$. Thanks to the Sobolev embedding $W^{1,q}(\Omega)\hookrightarrow C(\overline{\Omega})$, we can therefore define
\[c^-\defeq\min_{x\in\overline{\Omega}}c_d\,\,\,\,\,\text{and}\,\,\,\,\,c^+\defeq\max_{x\in\overline{\Omega}}c_d.\]
We shall assume that the stress tensor $\SSS:\R_{\geq 0}\times\R^{d\times
d}_{\rm sym}\rightarrow\R^{d\times d}_{\rm sym}$ is a continuous function
satisfying the following growth, strict monotonicity and coercivity
conditions, respectively: there exist positive constants $C_1$, $C_2$ and $C_3$ such that
\begin{equation}\label{S1}
|\SSS(\xi,\boldsymbol{B})|\leq C_1(|\boldsymbol{B}|^{r(\xi)-1}+1),
\end{equation}
\begin{equation}\label{S2}
(\SSS(\xi,\boldsymbol{B_1})-\SSS(\xi,\boldsymbol{B_2}))\cdot(\boldsymbol{B_1}-\boldsymbol{B_2})>0\,\,\,\text{for}\,\,\boldsymbol{B_1}\neq
\boldsymbol{B_2},
\end{equation}
\begin{equation}\label{S3}
\SSS(\xi,\boldsymbol{B})\cdot \boldsymbol{B}\geq C_2(|\boldsymbol{B}|^{r(\xi)}+|\SSS|^{r'(\xi)})-C_3,
\end{equation}
where $r:\R_{\geq0}\rightarrow\R_{\geq0}$ is a H\"older-continuous function with $1<r^-\leq r(\xi)\leq r^+<\infty$ and $r'(\xi)$ is defined as its H\"older conjugate, $\frac{r(\xi)}{r(\xi)-1}$.
We further assume that the concentration flux vector
$\q_c(\xi,\boldsymbol{g},\boldsymbol{B}):\R_{\geq 0}\times\R^d\times\R^{d\times d}_{\rm{sym}}\rightarrow\R^d$ is a
continuous function, which is linear with respect to $\boldsymbol{g}$, and additionally satisfies the following inequalities: there exist positive constants $C_4$ and $C_5$ such that
\begin{align}
|\q_c(\xi,\boldsymbol{g},\boldsymbol{B})|&\leq C_4|\boldsymbol{g}|,\label{q1}\\
\q_c(\xi,\boldsymbol{g},\boldsymbol{B})\cdot \boldsymbol{g}&\geq C_5|\boldsymbol{g}|^2.\label{q2}
\end{align}

The prototypical examples we have in mind are of the following form:
\[\SSS(c,\DD\uu)=\nu(c,|\DD\uu|)\DD\uu,\qquad \q_c(c,\nabla c,\DD\uu)=\boldsymbol{K}(c,|\DD\uu|)\nabla c,\]
{\color{black}{where the viscosity $\nu(c,|\DD\uu|)$, depending on the concentration and on the shear-rate, is of the form:}}
\[\nu(c,|\DD\uu|)\sim\nu_0(\kappa_1+\kappa_2|\DD\uu|^2)^{\frac{r(c)-2}{2}},\]
where $\nu_0,\kappa_1,\kappa_2$ are positive constants.

{\color{black}{The coupled system of generalized Navier--Stokes equations and a convection-diffusion-reaction
equation with diffusion coefficient depending on both the shear rate and the
concentration was first studied in \cite{BMR2008},
{\color{black}{where, however, the shear-thinning index was a fixed constant and the influences
of the concentration and the shear
rate were separated. There, the authors considered the unsteady model and established the long-time existence
of weak solutions subject to large initial data with a}}
constant $r>\frac{9}{5}$ exploiting an $L^{\infty}$ truncation method.

Here we are faced with a model where the shear-thinning index is not a fixed constant or a fixed function, but is
concentration-dependent. The mathematical analysis of the model \eqref{eq1}--\eqref{q2}, where the power law-index
depends on the concentration, was initiated in \cite{BP2013}
by using generalized monotone operator theory for $r^->\frac{3d}{d+2}$.
Recently, in \cite{BP2014}, the authors succeeded in lowering the bound on
$r^-$ to $\frac{d}{2}$ and proved the H\"older-continuity of the concentration. It was emphasized in
\cite{BP2014} that the bound $r^->\frac{d}{2}\geq\frac{2d}{d+2}$
{\color{black}{ensures that one can guarantee H\"older-continuity of $c$ by using De Giorgi's method.
In fact, according to the results in \cite{DMS2008}, in the framework of variable-exponent spaces,
at least some regularity of the power-law exponent is required, not only for the Lipschitz truncation
method, which strongly relies on the continuity of the exponent, but also for the purpose of extending
classical Sobolev embedding theorems, various functional inequalities, and the boundedness of the maximal
operator, to variable-order counterparts of classical function spaces; see the next section for more details.}}}}

As for the finite element approximation of the model \eqref{eq1}--\eqref{q2}, no results have been established so far. We mention, however, some related developments: recently, in \cite{DKS2013}, using various weak compactness techniques, such as Chacon's biting lemma, Young measures,  and a new finite element counterpart of the Lipschitz truncation method, the Diening
et al. proved the convergence of the finite element approximation of a general class of steady incompressible non-Newtonian fluid flow models (not coupled to a convection-diffusion equation, though,) where the viscous stress tensor and the rate-of-strain tensor {\color{black}{were related through a, possibly discontinuous, maximal monotone graph.}}
{\color{black}{In \cite{R2004}, R{\ocirc{u}}{\v{z}}i{\v{c}}ka considered electro-rheological models with a fixed power-law exponent; a fully-implicit time discretization was developed and an error estimate was obtained. Concerning PDEs
with nonlinearities involving a variable exponent, in \cite{DAA}, Duque et al.
focused on a porous medium equation with a variable exponent, which was a given function, and they established the convergence of a sequence of finite element approximations to the problem. Furthermore, in \cite{BBD}, electro-rheological fluids were studied, where
the stress tensor was of power-law type with a variable power-law exponent; a discretization of
the problem was constructed and the convergence of the sequence of discrete solutions to a weak solution was shown.}}

In this paper we consider the construction of a finite element approximation of the system of nonlinear partial differential  equations \eqref{eq1}--\eqref{q2} and, motivated by the ideas in \cite{BP2014},  we develop the convergence analysis of this numerical method in the case of variable-exponent spaces in a two-dimensional domain. We note that the extension of the results of this paper to the case of three space dimensions is beyond the reach of the analysis developed here, because there is currently no finite element counterpart of De Giorgi's estimate for the three-dimensional nonlinear convection-diffusion equation satisfied by the concentration $c$. Nevertheless, at least initially, we shall admit $d \in \{2,3\}$. Subsequently we shall restrict ourselves to the case of $d=2$. Also, as no uniqueness result is currently available for weak solutions of the problem under consideration, we can only show that a subsequence of the sequence of numerical approximations converges to \textit{a} weak solution of the problem.

\end{section}

\begin{section}{Notation and auxiliary results}
In this section, we introduce some function spaces and preliminaries, which will be used throughout. Let $\mathcal{P}$ be the set of all measurable functions $r:\Omega\rightarrow[1,\infty]$; we shall call the function $r\in\mathcal{P}(\Omega)$ a variable exponent. We also define $r^-\defeq$ ess $\inf_{x\in\Omega}r(x)$, $r^+\defeq$ ess $\sup_{x\in\Omega}r(x)$ and for simplicity, we only consider the case
\begin{equation}\label{pcondition}
1<r^-\leq r^+<\infty,
\end{equation}
as $r^-=1$ and $r^+=\infty$ are of no physical relevance in the PDE model under consideration here.

Since we are considering the case of a power-law index depending on concentration, we need to work in Lebesgue and Sobolev spaces with variable exponents. To be more precise, we introduce the following variable-exponent Lebesgue spaces, equipped with the corresponding Luxembourg norms:
\begin{align*}
L^{r(\cdot)}(\Omega)&\defeq \left\{u\in L^1_{\rm{loc}}(\Omega):\int_{\Omega}|u(x)|^{r(x)}\dx<\infty\right\},\\
\norm{u}_{L^{r(\cdot)}(\Omega)}=\norm{u}_{r(\cdot)}&\defeq \inf\left\{\lambda>0:\int_{\Omega}\bigg|\frac{u(x)}{\lambda}\bigg|^{r(x)}\dx\leq1\right\}.
\end{align*}
In the same way, we introduce the following variable-exponent Sobolev spaces
\begin{align*}
W^{1,r(\cdot)}(\Omega)&\defeq \left\{u\in W^{1,1}(\Omega)\cap L^{r(\cdot)}(\Omega):|\nabla u|\in L^{r(\cdot)}\right\},\\
\norm{u}_{W^{1,r(\cdot)}(\Omega)}=\norm{u}_{1,r(\cdot)}&\defeq \inf\left\{\lambda>0:\int_{\Omega}\left[\bigg|\frac{u(x)}{\lambda}\bigg|^{r(x)}+\bigg|\frac{\nabla u(x)}{\lambda}\bigg|^{r(x)}\right]\dx\leq1\right\}.
\end{align*}

It is easy to show that all the above spaces are Banach spaces, and because of \eqref{pcondition}, they are all separable and reflexive; see \cite{DHHR2011}. We also define the dual space $L^{r(\cdot)}(\Omega)^*=L^{r'(\cdot)}(\Omega)$ where the dual exponent $r'\in\mathcal{P}(\Omega)$ is defined by $\frac{1}{r(x)}+\frac{1}{r'(x)}=1.$ Regarding duality, we have the following analogue of the Riesz representation theorem in variable-exponent Lebesgue spaces, see \cite{Riesz}.
\begin{theorem}\label{Riesz}
Suppose that $1<r^-\leq r^+<\infty$. Then, for any linear functional $F\in L^{r(\cdot)}(\Omega)^*$, there exists a unique function $f\in L^{r'(\cdot)}(\Omega)$ such that
\[F(u)=\int_{\Omega}f(x)u(x) \dx\qquad\forall\, u\in L^{r(\cdot)}(\Omega).\]
\end{theorem}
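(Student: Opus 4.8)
The plan is to establish separately the two continuous inclusions $L^{r'(\cdot)}(\Omega)\hookrightarrow L^{r(\cdot)}(\Omega)^*$ and $L^{r(\cdot)}(\Omega)^*\hookrightarrow L^{r'(\cdot)}(\Omega)$, the first being a straightforward consequence of H\"older's inequality and the second resting on a Radon--Nikod\'ym argument together with a truncation estimate. For the first inclusion, recall the variable-exponent H\"older inequality (see \cite{DHHR2011}): if $g\in L^{r'(\cdot)}(\Omega)$ and $u\in L^{r(\cdot)}(\Omega)$, then $gu\in L^1(\Omega)$ and $\int_\Omega\abs{gu}\dx\le 2\norm{g}_{r'(\cdot)}\norm{u}_{r(\cdot)}$. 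Hence $F_g(u)\defeq\int_\Omega gu\dx$ is a bounded linear functional on $L^{r(\cdot)}(\Omega)$ with $\norm{F_g}\le 2\norm{g}_{r'(\cdot)}$, and $g\mapsto F_g$ is linear. It is injective: since $\Omega$ is bounded, every $u\in L^\infty(\Omega)$ — in particular every characteristic function of a measurable subset of $\Omega$ — belongs to $L^{r(\cdot)}(\Omega)$, so $F_g\equiv 0$ forces $\int_E g\dx=0$ for all measurable $E\subseteq\Omega$, whence $g=0$ a.e. This gives the uniqueness asserted in the theorem, and it remains to prove surjectivity.

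Given $F\in L^{r(\cdot)}(\Omega)^*$, define the set function $\nu(E)\defeq F(\chi_E)$ for measurable $E\subseteq\Omega$. Because $r^+<\infty$, the Luxemburg norm is controlled by the modular: for $h\in L^{r(\cdot)}(\Omega)$ one has $\norm{h}_{r(\cdot)}\le\max\{\rho(h)^{1/r^-},\rho(h)^{1/r^+}\}$, where $\rho(h)\defeq\int_\Omega\abs{h}^{r(x)}\dx$; in particular $\norm{\chi_E}_{r(\cdot)}\le\max\{\abs{E}^{1/r^-},\abs{E}^{1/r^+}\}\to 0$ as $\abs{E}\to 0$. Finite additivity of $\nu$ follows from linearity of $F$, and countable additivity follows from continuity of $F$ together with $\norm{\chi_{\bigcup_{k>n}E_k}}_{r(\cdot)}\to 0$ for pairwise disjoint sets $E_k$. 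Thus $\nu$ is a finite signed measure; moreover $\abs{E}=0$ implies $\chi_E=0$ in $L^{r(\cdot)}(\Omega)$, hence $\nu(E)=0$, so $\nu$ is absolutely continuous with respect to Lebesgue measure. By the Radon--Nikod\'ym theorem there exists $f\in L^1(\Omega)$ with $\nu(E)=\int_E f\dx$. By linearity, $F(s)=\int_\Omega fs\dx$ for every simple function $s$, and, since simple functions approximate bounded functions uniformly and $\Omega$ is bounded, $F(g)=\int_\Omega fg\dx$ for every bounded measurable function $g$ on $\Omega$.

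It remains to prove that $f\in L^{r'(\cdot)}(\Omega)$; granted this, the identity $F(u)=\int_\Omega fu\dx$ extends from simple functions to all of $L^{r(\cdot)}(\Omega)$, because both sides are continuous on $L^{r(\cdot)}(\Omega)$ (the right-hand side by H\"older's inequality) and simple functions are dense in $L^{r(\cdot)}(\Omega)$ when $r^+<\infty$ (see \cite{DHHR2011}). For each integer $n\ge 1$, set $\Omega_n\defeq\{x\in\Omega:\abs{f(x)}\le n\}$, $f_n\defeq f\chi_{\Omega_n}$ and $u_n\defeq\abs{f_n}^{r'(\cdot)-1}\op{sgn}(f)$; since $\abs{f_n}\le n$ and $r'(\cdot)$ is bounded (because $r^->1$), $u_n$ is bounded. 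Using the pointwise identity $(r'(x)-1)\,r(x)=r'(x)$ one computes $\int_\Omega\abs{u_n}^{r(x)}\dx=\int_\Omega\abs{f_n}^{r'(x)}\dx\eqdef\rho_n$ and $\int_\Omega fu_n\dx=\rho_n$; since $u_n$ is bounded, $\int_\Omega fu_n\dx=F(u_n)$, and therefore
\[\rho_n=F(u_n)\le\norm{F}\,\norm{u_n}_{r(\cdot)}\le\norm{F}\,\max\{\rho_n^{1/r^-},\rho_n^{1/r^+}\}.\]
As $r^->1$, this inequality forces $\rho_n\le C$ for some finite constant $C=C(\norm{F},r^-,r^+)$ independent of $n$. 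Since $\abs{f_n}^{r'(x)}$ increases to $\abs{f}^{r'(x)}$ almost everywhere, the monotone convergence theorem yields $\int_\Omega\abs{f}^{r'(x)}\dx\le C$, i.e. $f\in L^{r'(\cdot)}(\Omega)$, completing the argument.

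The step I expect to be the main obstacle is the last one. In contrast with the constant-exponent case, the Luxemburg norm and the modular $\rho(\cdot)$ are related only by inequalities (the unit-ball property and its consequences), so both the reconstruction of the duality pairing and the quantitative bound on $\norm{f}_{r'(\cdot)}$ have to be extracted through a case distinction according to whether $\rho_n\le 1$ or $\rho_n>1$; in addition, at every stage one must check — which is precisely the role of the truncation at level $n$ and of the boundedness of $\Omega$ — that the test functions used genuinely belong to $L^{r(\cdot)}(\Omega)$.
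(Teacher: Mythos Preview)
Your argument is correct. The paper does not supply its own proof of this statement: it is quoted as a known result with a reference to the literature (``see \cite{Riesz}''), so there is nothing in the paper to compare against. The route you take --- H\"older's inequality for the embedding $L^{r'(\cdot)}\hookrightarrow (L^{r(\cdot)})^*$, a Radon--Nikod\'ym argument to produce the density $f\in L^1(\Omega)$, and a truncation/test-function computation using the norm--modular inequalities to upgrade $f$ to $L^{r'(\cdot)}(\Omega)$ --- is exactly the standard proof one finds in the variable-exponent literature (e.g.\ \cite{DHHR2011}), and each step is sound under the stated hypotheses $1<r^-\le r^+<\infty$ on a bounded $\Omega$. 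Your closing remark about the case split $\rho_n\le 1$ versus $\rho_n>1$ is well placed; writing it out explicitly (if $\rho_n>1$ then $\rho_n\le \norm{F}\rho_n^{1/r^-}$, whence $\rho_n\le \norm{F}^{(r^-)'}$) would make the bound fully transparent.
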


Additionally, we introduce some function spaces that are frequently used in connection with mathematical models of incompressible fluids. Henceforth, $X(\Omega)^d$ will denote the space of $d$-component vector-valued functions with components from $X(\Omega)$. We also define the space of tensor-valued functions $X(\Omega)^{d\times d}$. Finally, we define the following spaces:
\begin{align*}
W^{1,r(\cdot)}_0(\Omega)&\defeq \left\{u\in W^{1,r(\cdot)}(\Omega):u=0\,\,\text{on}\,\,\partial\Omega\right\},\\
W^{1,r(\cdot)}_{0,\rm{div}}(\Omega)^d&\defeq \left\{\uu\in W^{1,r(\cdot)}_0(\Omega)^d:{\rm{div}}\,\uu=0\right\},\\
L^{r(\cdot)}_0(\Omega)&\defeq \left\{f\in L^{r(\cdot)}(\Omega):\int_{\Omega}f(x)\dx=0\right\}.
\end{align*}

Throughout the paper, we shall denote the duality pairing between $f\in X$ and $g\in X^*$ by  $\langle g,f\rangle$, and for two vectors $\boldsymbol{a}$ and $\boldsymbol{b}$, $\boldsymbol{a}\cdot \boldsymbol{b}$ denotes their scalar product; and, similarly, for two tensors $\mathbb{A}$ and $\mathbb{B}$, $\mathbb{A} \cdot \mathbb{B}$ signifies their scalar product. Also, for any Lebesgue measurable set $Q\subset\R^d$, $|Q|$ denotes the standard Lebesgue measure of the set $Q$, and $C$ signifies a generic positive constant, which may change at each appearance.

Next we define some technical tools required in this paper. First we introduce the subset $\mathcal{P}^{\rm{log}}(\Omega)\subset\mathcal{P}(\Omega)$: it will denote the set of all log-H\"older-continuous functions
defined on $\Omega$, that is the set of all functions $r$ defined on $\Omega$ such that
\begin{equation}\label{logh}
|r(x)-r(y)|\leq\frac{C_{\rm{log}}(r)}{-\log|x-y|}\qquad \forall\, x,y\in\Omega:0<|x-y|\leq\frac{1}{2}.
\end{equation}
It is obvious that classical H\"older-continuous functions on $\Omega$ automatically belong to this class. Also we define, for any $u\in L^1(\R^d)$, the Hardy--Littlewood maximal operator by
\[(Mu)(x)\defeq\sup_{r>0}\frac{1}{B_r(x)}\int_{B_r(x)}|u(y)|\dy, \qquad x \in \mathbb{R}^d,\]
where $B_r(x)$ is the open ball in $\mathbb{R}^d$ of radius $r$ centred at $x \in \mathbb{R}^d$.
Similarly, for any $\uu\in W^{1,1}(\R^d)^d$, we define $M(\nabla\uu)\defeq M(|\nabla\uu|)$.

Keeping in mind the above definition, we state the following lemma, which summarizes basic properties of Lebesgue and Sobolev spaces with a log-H\"older-continuous variable exponent. {\color{black}{For a proof, we refer to \cite{DHHR2011}, which is also an extensive source of information about variable-exponent spaces.}}

\begin{lemma}
Let $\Omega\subset\R^d$ be a bounded open Lipschitz domain and let $r\in\mathcal{P}^{\rm{log}}(\Omega)$ satisfy \eqref{pcondition}. Then the following properties hold:
\begin{itemize}
\item Density theorem, i.e.,
\[\overline{C^{\infty}(\overline{\Omega})}^{\norm{\cdot}_{1,r(\cdot)}}=W^{1,r(\cdot)}(\Omega).\]
\item Embedding theorem, i.e., if $1<r^-\leq r^+<d$ then
\[W^{1,r(\cdot)}(\Omega)\hookrightarrow L^{q(\cdot)}(\Omega)\,\,\,{\rm{provided}}\,\,\,{\rm{that}}\,\,\,1\leq q(x)\leq\frac{dr(x)}{d-r(x)}\eqdef r^*(x)\qquad\forall\, x\in\overline{\Omega}.\]
The embedding is compact whenever $q(x)<r^*(x)$ for all $x\in\overline{\Omega}$.
\item H\"older's inequality, i.e.,
\[\norm{fg}_{s(\cdot)}\leq2\norm{f}_{r(\cdot)}\norm{g}_{q(\cdot)},\,\,\,\text{with}\,\,\,r,q,s\in\mathcal{P}(\Omega),\,\,\,\frac{1}{s(x)}=\frac{1}{r(x)}+\frac{1}{q(x)}, \quad x \in \Omega.\]

\item Poincar\'e's inequality, i.e.,
\[\|u\|_{r(\cdot)}\leq C(d,C_{\rm{log}}(r))\,{\rm{diam}}(\Omega)\|\nabla u\|_{r(\cdot)}\qquad\forall\, u\in W^{1,r(\cdot)}_0(\Omega).\]

\item Korn's inequality, i.e.,
\[\norm{\nabla \uu}_{r(\cdot)}\leq C(\Omega, C_{\rm{log}}(r))\norm{\DD\uu}_{r(\cdot)}\qquad \forall\, \uu\in W^{1,r(\cdot)}_0(\Omega)^d,\]
where $C_{\rm{log}}(r)$ is the constant appearing in the definition of the class of log-H\"older-continuous functions.
\end{itemize}
\end{lemma}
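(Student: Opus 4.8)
The plan is to derive all five properties from two facts, each essentially equivalent in strength to the log-H\"older hypothesis \eqref{logh}: (i) the Hardy--Littlewood maximal operator is bounded on $L^{r(\cdot)}(\R^d)$, $\norm{Mf}_{r(\cdot)}\le C\norm{f}_{r(\cdot)}$ (Diening's theorem), and, closely related, so are Calder\'on--Zygmund singular integral operators; and (ii) the oscillation bound $\abs{B}^{r^-_B-r^+_B}\le C$ for every ball $B$ of radius at most $\tfrac12$, where $r^{\pm}_{B}$ are the essential infimum and supremum of $r$ over $B\cap\Omega$ (immediate from \eqref{logh}); fact (ii) governs the comparison between the Luxemburg norm and the modular on small scales, and fact (i) converts pointwise domination by $M$ into norm estimates. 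H\"older's inequality needs neither and can be settled first: normalising $\norm{f}_{r(\cdot)}=\norm{g}_{q(\cdot)}=1$ and writing $\tfrac{1}{s(x)}=\tfrac{1}{r(x)}+\tfrac{1}{q(x)}$, the pointwise Young inequality gives $\abs{fg}^{s(x)}\le\abs{f}^{r(x)}+\abs{g}^{q(x)}$, whence $\int_\Omega\abs{fg(x)/2}^{s(x)}\dx\le\tfrac12\int_\Omega(\abs{f}^{r(x)}+\abs{g}^{q(x)})\dx\le1$ since $s(x)\ge1$; thus $\norm{fg}_{s(\cdot)}\le2$, and the general case follows by homogeneity.

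For the density theorem I would use the standard three-step scheme. First, truncation $u\mapsto\max(-k,\min(k,u))$, together with the lattice property of $W^{1,r(\cdot)}(\Omega)$ and dominated convergence in the modular, reduces matters to bounded $u$. Second, since $\Omega$ is Lipschitz, a partition of unity, flattening of the boundary, and a small inward translation of each local piece produce functions that are smooth near the (shifted) boundary. Third, one mollifies and shows $\varphi_\varepsilon*v\to v$ in $L^{r(\cdot)}(\Omega)$ for $v\in\{u,\nabla u\}$: for a radially decreasing mollifier $\varphi_\varepsilon*v\le C\,Mv$ pointwise, so fact (i) yields the uniform bound $\norm{\varphi_\varepsilon*v}_{r(\cdot)}\le C\norm{v}_{r(\cdot)}$; combined with convergence $\varphi_\varepsilon*v\to v$ in $L^{r^-}(\Omega)$ (valid because $L^{r(\cdot)}(\Omega)\hookrightarrow L^{r^-}(\Omega)$ on the bounded domain) and an equi-integrability argument in the modular, this gives convergence in $L^{r(\cdot)}(\Omega)$.

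The embedding, Poincar\'e and Korn inequalities all flow from the boundedness of $M$ (or of the Riesz transforms) via explicit pointwise representations. For $u\in W^{1,r(\cdot)}_0(\Omega)$ one has the Sobolev pointwise bound $\abs{u(x)}\le C\int_\Omega\abs{x-y}^{1-d}\abs{\nabla u(y)}\dy$, whose right-hand side is a constant times the Riesz potential $(I_1\abs{\nabla u})(x)$, and a dyadic decomposition of this potential over annuli centred at $x$ gives $(I_1g)(x)\le C\,{\rm{diam}}(\Omega)(Mg)(x)$; applying fact (i) at once gives Poincar\'e's inequality, with constant depending on $d$ and $C_{\rm{log}}(r)$ through the maximal bound, and gives $W^{1,r(\cdot)}_0(\Omega)\hookrightarrow L^{r(\cdot)}(\Omega)$. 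The sharp gain $W^{1,r(\cdot)}(\Omega)\hookrightarrow L^{r^*(\cdot)}(\Omega)$ comes from the variable-exponent Sobolev inequality for $I_1$ (again a consequence of \eqref{logh}, via the standard splitting of $I_1g$ into a local part controlled by a power of $Mg$ and a bounded tail), preceded by a bounded extension to $\R^d$ in the non-zero-trace case; the stated range $q(x)\le r^*(x)$ then follows since $L^{r^*(\cdot)}(\Omega)\hookrightarrow L^{q(\cdot)}(\Omega)$ on the bounded domain. For the compactness claim when $q(x)<r^*(x)$ for all $x\in\overline{\Omega}$: continuity of $q$ and $r^*$ on the compact set $\overline{\Omega}$ gives $\delta>0$ with $q(x)\le r^*(x)-\delta$; a bounded sequence in $W^{1,r(\cdot)}(\Omega)$ is bounded in $W^{1,r^-}(\Omega)$, hence has a subsequence converging in $L^1(\Omega)$ and a.e.\ while staying bounded in $L^{r^*(\cdot)}(\Omega)$, and a uniform Young inequality $t^{q(x)}\le\mu\,t^{r^*(x)}+C_\mu$ (available since $q(x)\le r^*(x)-\delta$) together with Vitali's theorem upgrades this to convergence in $L^{q(\cdot)}(\Omega)$. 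For Korn's inequality, I would extend $\uu\in C^\infty_c(\Omega)^d$ by zero, use the identity $\partial_i\partial_j u_k=\partial_i(\DD\uu)_{jk}+\partial_j(\DD\uu)_{ik}-\partial_k(\DD\uu)_{ij}$ to express each $\partial_i u_j$ as a fixed linear combination of second-order Riesz-transform operators applied to the components of $\DD\uu$, invoke their $L^{r(\cdot)}(\R^d)$-boundedness (a refinement of Diening's theorem, or Diening's extrapolation theorem applied to the classical weighted $L^p$ Korn inequality), and pass to the limit using the density theorem and the fact that on $W^{1,r(\cdot)}_0(\Omega)$ the zero extension commutes with $\DD$.

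The decisive obstacle is fact (i): the $L^{r(\cdot)}$-boundedness of the maximal operator and of Calder\'on--Zygmund operators under \eqref{logh}. This is the genuinely deep ingredient of the whole theory (Diening's theorem, with \eqref{logh} essentially sharp), and it is precisely what forces the restriction $r\in\mathcal{P}^{\rm{log}}(\Omega)$ throughout: without it the mollification step of the density theorem, the embedding, Poincar\'e's inequality and Korn's inequality all fail in general. Everything else---the pointwise Sobolev representation, the dyadic domination $I_1\le C\,{\rm{diam}}(\Omega)\,M$, pointwise Young, and the truncation and compactness arguments---is standard and essentially exponent-independent. Accordingly, in the write-up I would cite \cite{DHHR2011} (and \cite{DKS2013} for the finite-element-relevant refinements) for fact (i) and the extrapolation theorem, and supply details only for the reductions sketched above.
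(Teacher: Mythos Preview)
Your sketch is correct and follows the standard route through Diening's maximal theorem and its consequences, but note that the paper does not actually prove this lemma: it is stated as a summary and the reader is referred to \cite{DHHR2011} for proofs. Your proposal therefore goes further than the paper, supplying the outline that the paper deliberately omits; since you, too, ultimately defer the deep ingredient (boundedness of $M$ on $L^{r(\cdot)}$ under \eqref{logh}) to \cite{DHHR2011}, the two treatments are consistent, with yours simply being more explicit about the reductions.
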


Next, we recall the following generalization of McShane's extension theorem (cf. Corollary 1 in \cite{McShane}) to variable-exponent spaces and the boundedness of the maximal operator in variable-exponent context.
\begin{lemma}\label{pext}
{\rm{(Variable index extension \cite{CFN2003}})} Let $\Omega\subset\R^d$ be an bounded open  Lipschitz domain and suppose that $r\in\mathcal{P}^{\rm{log}}(\Omega)$ is arbitrary with $r^->1$. Then, there exists an extension $q\in\mathcal{P}^{\rm{log}}(\R^d)$ such that $q^-=r^-$ and $q^+=r^+$, and the Hardy--Littlewood maximal operator $M$ is bounded from $L^{q(\cdot)}(\R^d)$ to $L^{q(\cdot)}(\R^d)$.
\end{lemma}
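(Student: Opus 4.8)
The plan is to establish the two assertions in turn: first construct the extension $q$ explicitly by a McShane-type (infimal-convolution) formula \cite{McShane}, and then deduce the boundedness of $M$ on $L^{q(\cdot)}(\R^d)$ from the maximal-function theorem for variable exponents on the whole space. For the construction, note first that, since $r$ obeys the pointwise bound \eqref{logh} and $\Omega$ is bounded, $r$ is uniformly continuous on $\Omega$ and extends continuously to $\overline{\Omega}$; we work with this representative. I would then fix a bounded, nondecreasing, concave function $\omega_0\colon[0,\infty)\to[0,\infty)$ with $\omega_0(0)=0$ and $\omega_0(t)=\omega_0(\tfrac12)$ for $t\geq\tfrac12$, chosen so that $\omega_0$ dominates the (uniform, since $\overline\Omega$ is compact) modulus of continuity of $r$ on $\overline\Omega$ and so that $\omega_0(t)\leq \widetilde C/(-\log t)$ for $0<t\leq\tfrac12$ with $\widetilde C$ a fixed multiple of $C_{\rm{log}}(r)$; such an $\omega_0$ exists because the log-H\"older modulus $t\mapsto C_{\rm{log}}(r)/(-\log t)$ is, up to a constant factor, concave near the origin (see \cite{DHHR2011}). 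Finally I would set
\[
q(x)\defeq\min\!\Big(r^+,\ \inf_{y\in\overline\Omega}\big(r(y)+\omega_0(|x-y|)\big)\Big),\qquad x\in\R^d .
\]

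Next I would check the four required properties, in order. (i) $q=r$ on $\overline\Omega$: the choice $y=x$ gives ``$\leq$'', while for ``$\geq$'' one uses $r(x)-r(y)\leq\omega_0(|x-y|)$ together with $r(x)\leq r^+$. (ii) $r^-\leq q\leq r^+$ on $\R^d$, with both bounds attained on $\Omega$ (where $q=r$); hence $q^-=r^-$ and $q^+=r^+$. (iii) A concave nondecreasing function vanishing at the origin is subadditive, so the infimal convolution $x\mapsto\inf_{y\in\overline\Omega}(r(y)+\omega_0(|x-y|))$ has modulus of continuity at most $\omega_0$, and taking the minimum with a constant does not increase this; hence $|q(x)-q(x')|\leq\omega_0(|x-x'|)$ for all $x,x'$, so $q$ satisfies \eqref{logh} on $\R^d$, i.e.\ $q\in\mathcal{P}^{\rm{log}}(\R^d)$. (iv) If $\operatorname{dist}(x,\overline\Omega)\geq\tfrac12$ then $\omega_0(|x-y|)=\omega_0(\tfrac12)$ for every $y\in\overline\Omega$, so $q(x)=\min(r^+,r^-+\omega_0(\tfrac12))=:q_\infty$ is constant there; since $\{x:\operatorname{dist}(x,\overline\Omega)<\tfrac12\}$ is bounded (here the boundedness of $\Omega$ is used), $q$ is constant outside a ball, and therefore satisfies the log-H\"older decay condition at infinity, $|q(x)-q_\infty|\leq C/\log(e+|x|)$ for all $x\in\R^d$.

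With $q\in\mathcal{P}^{\rm{log}}(\R^d)$ satisfying $1<q^-\leq q^+<\infty$ and the decay condition of (iv) in hand, the boundedness of $M$ on $L^{q(\cdot)}(\R^d)$ is precisely the conclusion of the maximal-function theorem of Cruz-Uribe, Fiorenza and Neugebauer \cite{CFN2003}, which I would quote directly; this completes the proof. The genuinely delicate point is the design of the modulus $\omega_0$: it must be engineered so that the extension simultaneously reproduces $r$ on $\Omega$, preserves the exponent bounds, stays log-H\"older, and is eventually constant, and it is the last of these requirements that forces $\omega_0$ to be taken bounded, which in turn is exactly what supplies the decay at infinity needed to apply \cite{CFN2003}. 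If one did not take that result as a black box, the main obstacle would shift to proving the underlying modular estimate $\int_{\R^d}(Mf)(x)^{q(x)}\dx\leq C$ for all $f$ with $\int_{\R^d}|f|^{q(\cdot)}\dx\leq1$ --- handled by splitting the averaging balls according to whether their radius is $\leq1$ (comparing $q(x)$ with the infimum of $q$ over the ball via \eqref{logh} and Jensen's inequality) or $>1$ (comparing $q(x)$ with $q_\infty$ via the decay condition, the error being an integrable tail), followed by a Calder\'on--Zygmund/good-$\lambda$ argument.
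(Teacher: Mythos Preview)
The paper does not supply its own proof of this lemma; it merely quotes the result from \cite{CFN2003} and points to McShane's extension theorem \cite{McShane} as the underlying mechanism. Your proposal is correct and makes explicit exactly the McShane-type infimal-convolution construction that the paper alludes to, together with the verification that the resulting extension is eventually constant (hence satisfies the decay condition at infinity needed for \cite{CFN2003}); so your argument is precisely what the cited references deliver, and nothing further is required.
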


Another relevant auxiliary result concerns the Bogovski\u{\i} operator.
{\color{black}{The following result guarantees the existence of the Bogovski\u{\i} operator in the variable-exponent setting,}} (see \cite{DHHR2011}).
\begin{theorem}\label{contibog}
Let $\Omega\subset\R^d$ be a bounded open Lipschitz domain and suppose that $r\in\mathcal{P}^{\rm{log}}(\Omega)$ with $1<r^-\leq r^+<\infty$. Then, there exists a bounded linear operator $\mathcal{B}:L^{r(\cdot)}_0(\Omega)\rightarrow W^{1,r(\cdot)}_0(\Omega)^d$ such that for all $f\in L^{r(\cdot)}_0(\Omega)$ we have
\begin{align*}
{\rm{div}}\,(\mathcal{B}f)&=f,\\
\|\mathcal{B}f\|_{1,r(\cdot)}&\leq C\|f\|_{r(\cdot)},
\end{align*}
where $C$ depends on $\Omega$, $r^-$, $r^+$, and $C_{\rm{log}}(r)$.
\end{theorem}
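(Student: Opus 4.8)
The plan is to reduce the variable-exponent statement to the classical constant-exponent theory by an extrapolation/interpolation argument based on the boundedness of the Hardy--Littlewood maximal operator, which is available here because $r \in \mathcal{P}^{\mathrm{log}}(\Omega)$ with $1 < r^- \le r^+ < \infty$. First I would recall that on a bounded Lipschitz domain $\Omega$ one can decompose $\Omega$ as a finite union of domains that are star-shaped with respect to a ball, so it suffices to treat that model case; on a domain star-shaped with respect to a ball $B$, the classical Bogovskiĭ formula gives an explicit integral operator $\mathcal{B}$, built from a fixed kernel $\omega \in C_c^\infty(B)$ with $\int \omega = 1$, of the form
\[
(\mathcal{B}f)(x) = \int_\Omega f(y)\, \boldsymbol{N}(x,y)\, \dy, \qquad \boldsymbol{N}(x,y) = \frac{x-y}{|x-y|^d}\int_{|x-y|}^{\infty} \omega\!\left(y + \rho\,\frac{x-y}{|x-y|}\right)\rho^{d-1}\,\mathrm{d}\rho,
\]
so that $\operatorname{div}(\mathcal{B}f) = f$ for $f \in L^{r(\cdot)}_0(\Omega)$ (this identity is purely algebraic and independent of the integrability exponent, so it transfers verbatim once the operator is shown to map into $W^{1,r(\cdot)}_0$).

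The analytic heart is the gradient bound $\|\nabla \mathcal{B}f\|_{r(\cdot)} \le C\|f\|_{r(\cdot)}$, which combined with Poincaré's inequality (available from the preceding lemma) yields $\|\mathcal{B}f\|_{1,r(\cdot)} \le C\|f\|_{r(\cdot)}$. The key structural fact is that $\partial_j (\mathcal{B}f)_i$ is a Calderón--Zygmund-type singular integral operator of convolution type plus a lower-order (weakly singular, hence harmless) remainder; in particular each component of $\nabla\mathcal{B}$ is bounded on every classical $L^p(\mathbb{R}^d)$ for $1 < p < \infty$ and, more to the point, is bounded on the weighted space $L^p_w(\mathbb{R}^d)$ for every Muckenhoupt weight $w \in A_p$, with operator norm depending only on $p$, $d$, the kernel $\omega$, and the $A_p$-characteristic $[w]_{A_p}$. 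Granting this weighted bound, I would invoke the Rubio de Francia extrapolation theorem in its variable-exponent form: if a sublinear operator $T$ is bounded on $L^{p_0}_w$ for all $w \in A_{p_0}$ (some fixed $p_0$), and $r \in \mathcal{P}^{\mathrm{log}}(\Omega)$ with $1 < r^- \le r^+ < \infty$, then $T$ is bounded on $L^{r(\cdot)}(\Omega)$. The hypothesis of that theorem is precisely that the maximal operator be bounded on $L^{r(\cdot)}$ and on $L^{(r(\cdot)/p_0)'}$ type spaces, which is guaranteed by the log-Hölder continuity (using Lemma~\ref{pext} to extend $r$ off $\Omega$ if one prefers to work on all of $\mathbb{R}^d$ and then restrict). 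This gives $\|\nabla\mathcal{B}f\|_{r(\cdot)} \le C\|f\|_{r(\cdot)}$ with $C = C(\Omega, r^-, r^+, C_{\mathrm{log}}(r))$, since the extrapolation constant depends on $r$ only through $r^\pm$ and the norms of the relevant maximal operators, which in turn depend only on $d$, $r^\pm$, and $C_{\mathrm{log}}(r)$.

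It remains to patch the pieces together: writing $\Omega = \bigcup_{k=1}^{m} \Omega_k$ with each $\Omega_k$ star-shaped with respect to a ball, one uses a standard partition-of-unity / iterated-correction argument (decompose $f$ into pieces supported in the $\Omega_k$ with vanishing mean, correcting the means by subtracting suitable multiples of fixed bump functions, and observe that all the bookkeeping stays inside $L^{r(\cdot)}_0$ since $L^{r^+}(\Omega)\hookrightarrow L^{r(\cdot)}(\Omega)\hookrightarrow L^{r^-}(\Omega)$ on the bounded domain $\Omega$, so mean-value operations are controlled). Linearity of $\mathcal{B}$ and the support/boundary properties giving $\mathcal{B}f \in W^{1,r(\cdot)}_0(\Omega)^d$ are inherited from the classical construction. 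The main obstacle — and the one place where the variable-exponent setting genuinely enters — is justifying the weighted $A_p$ bound for the singular-integral part of $\nabla\mathcal{B}$ with the remainder handled separately, and then correctly quoting the extrapolation theorem with explicit control of the constant in terms of $C_{\mathrm{log}}(r)$; everything else is a transcription of the classical Bogovskiĭ theory. (In practice one would simply cite \cite{DHHR2011}, where exactly this chain of reasoning is carried out.)
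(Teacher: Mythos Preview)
The paper does not actually prove this theorem: it is stated as an auxiliary result and simply attributed to \cite{DHHR2011}. Your proposal is a correct outline of precisely the argument carried out in that reference (reduction to star-shaped domains, the classical Bogovski\u{\i} integral formula, identification of $\nabla\mathcal{B}$ as a Calder\'on--Zygmund operator plus a harmless remainder, and then Rubio de Francia extrapolation using boundedness of the maximal operator under the log-H\"older hypothesis), so there is nothing to compare against beyond noting that you and the cited source agree.
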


Using this notation, the weak formulation of the problem \eqref{eq1}--\eqref{q2}, with the nonlinear terms satisfying the assumptions above, is as follows.

{\bf{Problem (Q).}} For $\boldsymbol{f}\in (W^{1,r^-}_0(\Omega)^d)^*$, $c_d\in W^{1,q}(\Omega)$, $q>d$, and a H\"older-continuous function $r$, with $1<r^-\leq r(c)\leq r^+<\infty$ for all $c\in[c^-,c^+]$, find $(c-c_d)\in W^{1,2}_0(\Omega)\cap C^{0,\alpha}(\overline{\Omega})$, for some $\alpha \in (0,1)$, $\uu\in W^{1,r(c)}_0(\Omega)^d$, $p\in L^{r'(c)}_0(\Omega)$ such that
\begin{alignat*}{2}
\int_{\Omega}\SSS(c,\DD\uu)\cdot\nabla\boldsymbol{\psi}-(\uu\otimes \uu)\cdot\nabla\boldsymbol{\psi}\dx-\langle{\rm{div}}\,\boldsymbol{\psi},p\rangle&=\langle \boldsymbol{f},\boldsymbol{\psi}\rangle\qquad&&\forall\,\boldsymbol{\psi}\in W^{1,\infty}_0(\Omega)^d,\\
\int_{\Omega}q\,{\rm{div}}\,\uu\dx&=0\qquad &&\forall\, q\in L^{r'(c)}_0(\Omega),\\
\int_{\Omega}\q_c(c,\nabla c,\DD\uu)\cdot\nabla\varphi-c\uu\cdot\nabla\varphi\dx&=0\qquad &&\forall\,\varphi\in W^{1,2}_0(\Omega).
\end{alignat*}

Let us now state the `continuous' inf-sup condition, which has an important role in the mathematical analysis of incompressible flow problems.

\begin{proposition}
Let $\Omega\subset\R^d$ be a bounded open Lipschitz domain and $r\in\mathcal{P}^{\rm{log}}(\Omega)$ with $1<r^-\leq r^+<\infty$. Then, there exists a constant $\alpha_r>0$ such that
\[\sup_{0\neq \vv\in W^{1,r(c)}_0(\Omega)^d,\,\,\|\vv\|_{1,r(c)}\leq1} \langle{\rm{div}}\,\vv,q\rangle\geq\alpha_r\|q\|_{r'(c)}\qquad\forall\, q\in L^{r'(c)}_0(\Omega).\]
\end{proposition}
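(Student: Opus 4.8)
The plan is to derive the inf-sup (Ne\v{c}as) inequality directly from the continuity of the Bogovski\u{\i} operator provided by Theorem~\ref{contibog}. The point is that the Bogovski\u{\i} operator furnishes, for each datum, an explicit right inverse of the divergence with a norm bound, which is exactly what is needed to bound the supremum from below. So I would not attempt to prove the inequality from scratch (e.g. via a Ne\v{c}as-type argument on $\nabla q$), but rather present it as a short corollary of the already-stated Bogovski\u{\i} theorem.

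Concretely, fix $q\in L^{r'(c)}_0(\Omega)$ with $q\neq 0$. Since $r'(c)$ is log-H\"older-continuous (being the H\"older conjugate of the H\"older-continuous function $r$ composed with $c$, and $r'$ inherits log-H\"older regularity because $r^->1$ keeps $r'$ bounded and the conjugation map is Lipschitz on $[r^-,r^+]$), Theorem~\ref{contibog} applies with exponent $r'(c)$: there is a bounded linear operator $\mathcal{B}\colon L^{r'(c)}_0(\Omega)\to W^{1,r'(c)}_0(\Omega)^d$ with ${\rm div}\,(\mathcal{B}q)=q$ and $\|\mathcal{B}q\|_{1,r'(c)}\le C\|q\|_{r'(c)}$. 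Wait --- one must be slightly careful about which exponent the test velocity lives in: the statement asks for the supremum over $\vv\in W^{1,r(c)}_0(\Omega)^d$, so I would instead invoke Theorem~\ref{contibog} with the exponent $r(c)$ itself and test against $q\in L^{r'(c)}_0(\Omega)=L^{r(c)}_0(\Omega)^*$ --- that is, take $\vv_0=\mathcal{B}\tilde q$ for a suitable datum $\tilde q\in L^{r(c)}_0(\Omega)$, or, more cleanly, use that the Bogovski\u{\i} operator from $L^{r(c)}_0$ to $W^{1,r(c)}_0$ exists and simply pair its image against $q$. Choosing $\vv_0=\mathcal{B}q$ (viewing $q\in L^{r(c)}_0(\Omega)$ when $q$ is taken as the \emph{datum}) gives ${\rm div}\,\vv_0=q$ and $\|\vv_0\|_{1,r(c)}\le C\|q\|_{r(c)}$; then
\[
\langle {\rm div}\,\vv_0,q\rangle=\int_\Omega q^2\dx,
\]
and normalising $\vv_0$ by its norm yields $\sup_{\|\vv\|_{1,r(c)}\le 1}\langle {\rm div}\,\vv,q\rangle\ge \frac{1}{\|\vv_0\|_{1,r(c)}}\int_\Omega q^2\dx\ge \frac{1}{C}\cdot\frac{\|q\|_{r(c)}^2}{\|q\|_{r(c)}}$, whence $\alpha_r=1/C$. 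A small subtlety is that $\int q^2$ versus $\|q\|_{r(c)}^2$ and the Luxembourg norm do not interact by a clean Cauchy--Schwarz identity in variable-exponent spaces, so the honest route is the duality one: by the variable-exponent Riesz/Hahn--Banach duality (Theorem~\ref{Riesz}) pick $g\in L^{r(c)}(\Omega)$ with $\|g\|_{r(c)}\le 1$ and $\int_\Omega q g\dx\ge \frac12\|q\|_{r'(c)}$, project $g$ onto $L^{r(c)}_0$ by subtracting its mean (which only helps, since $q$ has mean zero), set $\vv_0=\mathcal{B}g$, and compute $\langle {\rm div}\,\vv_0,q\rangle=\int q g\dx\ge\frac12\|q\|_{r'(c)}$ while $\|\vv_0\|_{1,r(c)}\le C\|g\|_{r(c)}\le C$.

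Thus the proof has three steps: (i) check that $r'(c)$, equivalently $r(c)$, is log-H\"older-continuous on $\overline\Omega$ so that Theorem~\ref{contibog} is applicable; (ii) use variable-exponent duality (Theorem~\ref{Riesz} together with the duality $L^{r(c)}(\Omega)^*=L^{r'(c)}(\Omega)$) to pick a near-extremal $g\in L^{r(c)}_0(\Omega)$ for the functional $q\mapsto\int q g$; (iii) apply the Bogovski\u{\i} operator to $g$ to produce an admissible test field and read off $\alpha_r=1/(2C)$ with $C$ the Bogovski\u{\i} constant, which depends only on $\Omega,r^-,r^+,C_{\rm log}$. The main --- indeed only --- genuine obstacle is step (i): one must confirm that composing the H\"older-continuous $r$ with the (merely H\"older-continuous, by De Giorgi) concentration $c$, and then taking the H\"older conjugate, still lands in $\mathcal{P}^{\rm log}(\Omega)$. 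This is where $r^->1$ is used (it bounds $r'$ away from $\infty$, making $t\mapsto t/(t-1)$ Lipschitz on $[r^-,r^+]$), and where the uniform continuity of $c$ on $\overline\Omega$ together with the modulus-of-continuity estimate for $r\circ c$ must be checked to dominate the $C_{\rm log}/(-\log|x-y|)$ bound; since H\"older continuity is stronger than log-H\"older continuity near the diagonal, this is routine but should be stated. Everything else is a one-line normalisation.
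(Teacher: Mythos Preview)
Your final approach---pick a near-extremal $g\in L^{r(\cdot)}_0(\Omega)$ via the duality formula $\tfrac12\|q\|_{r'(\cdot)}\le\sup_{\|g\|_{r(\cdot)}\le 1}\int_\Omega qg\,\mathrm{d}x$, then apply the Bogovski\u{\i} operator of Theorem~\ref{contibog} to $g$ and normalise---is exactly how the paper proves it (the paper simply says the proposition is ``a direct consequence of Theorem~\ref{contibog} and the norm-conjugate formula''). One small simplification: your step~(i) is unnecessary here, since the hypothesis already places the exponent in $\mathcal{P}^{\mathrm{log}}(\Omega)$, so Theorem~\ref{contibog} applies directly; the verification that the composed exponent $r\circ c$ is log-H\"older belongs to the \emph{application} of the proposition later in the paper, not to its proof.
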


This is a direct consequence of Theorem \ref{contibog} and the following norm-conjugate formula:

\begin{lemma}
Let $r\in\mathcal{P}^{\rm{log}}(\Omega)$ be a variable exponent with $1<r^-\leq r^+<\infty$; then we have
\[\frac{1}{2}\|f\|_{r(\cdot)}\leq\sup_{g\in L^{r'(\cdot)}(\Omega),\,\,\|g\|_{r'(\cdot)}\leq1}\int_{\Omega}|f||g|\dx,\]
for all measurable functions $f \in  L^{r(\cdot)}(\Omega)$.
\end{lemma}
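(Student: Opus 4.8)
The plan is to produce, for any given nonzero $f\in L^{r(\cdot)}(\Omega)$, a single explicit competitor $g$ lying in the closed unit ball of $L^{r'(\cdot)}(\Omega)$ for which $\int_\Omega|f||g|\dx$ is \emph{equal} to $\|f\|_{r(\cdot)}$, so that the factor $\tfrac12$ in the statement is generous slack. Since both sides of the claimed inequality are positively homogeneous of degree one in $f$, and the case $f=0$ is trivial, I will first normalise so that $\|f\|_{r(\cdot)}=1$. Writing $\rho_{r(\cdot)}(h)\defeq\int_\Omega|h(x)|^{r(x)}\dx$ for the associated modular, I will invoke the standard unit-ball property (see \cite{DHHR2011}), namely $\|h\|_{r(\cdot)}\le1\iff\rho_{r(\cdot)}(h)\le1$ for every variable exponent, together with the fact that, since $r^+<\infty$ here, one moreover has $\rho_{r(\cdot)}(f)=1$ whenever $\|f\|_{r(\cdot)}=1$.

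Next I will take $g(x)\defeq|f(x)|^{r(x)-1}$. This is measurable; and since $r^->1$ one has the pointwise bound $|g|\le|f|^{r(x)}+1$, so $g\in L^1(\Omega)\subset L^1_{\mathrm{loc}}(\Omega)$ because $\Omega$ is bounded and $\rho_{r(\cdot)}(f)<\infty$. Using the pointwise identity $(r(x)-1)r'(x)=r(x)$, which is just the definition $r'(x)=r(x)/(r(x)-1)$ rearranged (valid as $r(x)>1$), I will compute
\[
\rho_{r'(\cdot)}(g)=\int_\Omega|f(x)|^{(r(x)-1)r'(x)}\dx=\int_\Omega|f(x)|^{r(x)}\dx=\rho_{r(\cdot)}(f)=1 .
\]
Hence $g\in L^{r'(\cdot)}(\Omega)$, and by the easy direction of the unit-ball property applied to the exponent $r'\in\mathcal{P}(\Omega)$ — which satisfies $1<r'^-\le r'^+<\infty$ precisely because $1<r^-\le r^+<\infty$ — it follows that $\|g\|_{r'(\cdot)}\le1$. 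Finally, since this $g$ is admissible in the supremum,
\[
\sup_{\|g\|_{r'(\cdot)}\le1}\int_\Omega|f||g|\dx\;\ge\;\int_\Omega|f(x)|\,|f(x)|^{r(x)-1}\dx\;=\;\int_\Omega|f(x)|^{r(x)}\dx\;=\;1\;=\;\|f\|_{r(\cdot)},
\]
which is in fact stronger than the asserted bound.

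I do not expect any genuine obstacle: the argument is a short computation once the test function $g=|f|^{r(\cdot)-1}$ is identified. The only delicate point is the equality $\rho_{r(\cdot)}(f)=1$ for norm-one $f$, which truly uses $r^+<\infty$ from \eqref{pcondition} — for an unbounded exponent one obtains only $\rho_{r(\cdot)}(f)\le1$ and the constant would degrade. Should one wish to sidestep the equality case of the norm–modular relation, an alternative is to apply the same reasoning to the truncations $f\,\mathbf{1}_{\{1/n\le|f|\le n\}}$ over an exhaustion of $\Omega$, where everything is elementary, and then pass to the limit using monotone convergence of the Luxembourg norms; but citing \cite{DHHR2011} is cleaner.
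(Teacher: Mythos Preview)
Your proof is correct. The paper does not actually supply a proof of this lemma; it is stated as a known norm-conjugate formula, implicitly taken from the general theory in \cite{DHHR2011}. Your argument is the standard direct one: normalise $f$, take the explicit competitor $g=|f|^{r(\cdot)-1}$, and use the modular identity $\rho_{r(\cdot)}(f)=1$ for norm-one $f$ (valid since $r^+<\infty$) to verify both that $g$ lies in the unit ball of $L^{r'(\cdot)}(\Omega)$ and that the pairing equals $1$. As you observe, this yields the inequality with constant $1$ rather than $\tfrac12$; the weaker constant in the paper's statement is the one that falls out of the generalised H\"older inequality (which carries a factor of $2$ in the variable-exponent setting), but your direct construction sidesteps that loss.
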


Thanks to the above `continuous' inf-sup condition, we can restate {\bf{Problem (Q)}} in the following (equivalent) divergence-free setting.

{\bf{Problem (P).}} For $\boldsymbol{f}\in (W^{1,r^-}_0(\Omega)^d)^*$, $c_d\in W^{1,q}(\Omega)$, $q>d$, and H\"older-continuous function $r$, with $1<r^-\leq r(c)\leq r^+<\infty$ for all $c\in[c^-,c^+]$, find $(c-c_d)\in C^{0,\alpha}(\overline{\Omega})\cap W^{1,2}_0(\Omega)$, $\uu\in W^{1,r(c)}_{0,{\rm{div}}}(\Omega)^d$, such that
\begin{align*}
\int_{\Omega}\SSS(c,\DD\uu)\cdot\nabla\boldsymbol{\psi}-(\uu\otimes \uu)\cdot\nabla\boldsymbol{\psi}\dx&=\langle \boldsymbol{f},\boldsymbol{\psi}\rangle&&\forall\,\boldsymbol{\psi}\in W^{1,\infty}_{0,{\rm{div}}}(\Omega)^d,\\
\int_{\Omega}\q_c(c,\nabla c,\DD\uu)\cdot\nabla\varphi-c\uu\cdot\nabla\varphi\dx&=0&&\forall\,\varphi\in W^{1,2}_0(\Omega).
\end{align*}

The existence of a weak solution to problem {\bf{(P)}} was proved in \cite{BP2014} in the case when the variable exponent $x \mapsto r(x)$ is bounded below by $r^->\frac{d}{2}$. As it will be made clear later, we can only perform the convergence analysis of a finite element approximations of this problem in two dimension.
\end{section}

\begin{section}{Finite element approximation}
In this section, we will construct finite element spaces, which we shall use in this paper and state the Galerkin approximation of the problem \eqref{eq1}--\eqref{q2}. The existence of a finite element solution in the discretely divergence-free setting will be established by using Brouwer's fixed point theorem. Next, we shall prove a discrete inf-sup condition to ensure the existence of a discrete pressure. Finally we will state and prove discrete counterparts of some well-known theorems, which will be key tools in the convergence analysis of the finite element approximation of the problem under consideration.

\begin{subsection}{Finite element spaces}
Let $\{\G_n\}$ be a family of shape-regular partitions of $\overline{\Omega}$ satisfying the following properties:
\begin{itemize}
\item {\bf{Affine equivalence}}: For every element $E\in \G_n$, there exists a nonsingular affine mapping
\[\boldsymbol{F}_E:E\rightarrow\hat{E},\]
where $\hat{E}$ is the standard reference $d$-simplex in $\R^d$.
\item {\bf{Shape-regularity}}: For any element $E\in \G_n$, {\color{black}{the ratio of ${\rm{diam}}\,E$ to the radius of the inscribed ball is bounded below uniformly by a positive constant, with respect to all $\G_n$ and $n\in\mathbb{N}$.}}
\end{itemize}

For a given partition $\G_n$, the finite element spaces are defined by
\begin{align*}
\V^n&=\V(\G_n)\defeq \{\VV\in C(\overline{\Omega})^d:\VV_{|E}\circ \boldsymbol{F}^{-1}_E\in\hat{\mathbb{P}}_{\V},E\in \G_n\,\,\text{and}\,\,\VV_{|\partial\Omega}=\boldsymbol{0}\},\\
\Q^n&=\Q(\G_n)\defeq \{Q\in L^{\infty}(\Omega):Q_{|E}\circ \boldsymbol{F}^{-1}_E\in\hat{\mathbb{P}}_{\Q},E\in \G_n\},\\
\Z^n&=\Z(\G_n)\defeq\{Z\in C(\overline{\Omega}):Z_{|E}\circ \boldsymbol{F}^{-1}_E\in\hat{\mathbb{P}}_{\Z},E\in \G_n\,\,\text{and}\,\,Z_{|\partial\Omega}=0\},
\end{align*}
where $\hat{\mathbb{P}}_{\V}\subset W^{1,\infty}(\hat{E})^d$, $\hat{\mathbb{P}}_{\Q}\subset L^{\infty}(\hat{E})$ and $\hat{\mathbb{P}}_{\Z}\subset W^{1,\infty}(\hat{E})$ are finite-dimensional subspaces.

$\V^n$ and $\Z^n$ are assumed to have finite and locally supported bases; for example, in the case of $\V^n$, for each $n\in\mathbb{N}$, there exists an $N_n\in\mathbb{N}$ such that
\[\V^n=\text{span}\{\VV^n_1,\ldots,\VV^n_{N_n}\}\]
and for each basis function $\VV^n_i$, $i=1,\ldots,N_n$, we have that if there exists an $E\in \G_n$ with $\VV^n_i\neq0$ on $E$, then
\[\text{supp}\,\VV^n_j\subset\bigcup\{E'\in G_n:E'\cap E\neq\emptyset\}\eqdef S_E.\]
For the pressure space $\Q^n$, we assume that $\Q^n$ has a basis consisting of discontinuous piecewise polynomials; i.e., for each $n\in\mathbb{N}$, there exists an $\tilde{N}_n\in\mathbb{N}$ such that
\[\Q^n={\rm{span}}\{Q^n_1,\ldots,Q^n_{\tilde{N}_n}\}\]
and for each basis function $Q^n_i$, we have that
\[{\rm{supp}}\,Q^n_i=E\qquad{\rm{for}}\,\,\,{\rm{some}}\,\,\,E\in\G_n.\]

We assume further that $\V^n$ contains continuous piecewise linear functions and $\Q^n$ contains piecewise constant functions.

Note further, by shape-regularity, that
\[\exists m\in\mathbb{N}:|S_E|\leq m|E|\,\,\,{\rm{for}}\,\,\,{\rm{all}}\,\,\,E\in\G_n,\]
where $m$ is independent of $n$. We denote by $h_E$ the diameter of $E$.

We also introduce the subspace $\V^n_{\rm{div}}$ of discretely divergence-free functions. More precisely, we define
\[\V^n_{\rm{div}}\defeq\{\VV\in\V^n:\langle{\rm{div}}\,\VV,Q\rangle=0\,\,\,\forall\, Q\in\Q^n\},\]
and the subspace of $\Q^n$ consisting of vanishing integral mean-value approximations:
\[\Q^n_0\defeq\{Q\in\Q^n:\int_{\Omega}Q\dx=0\}.\]

Throughout this paper, we assume that all finite element spaces introduced above have the following properties.

\textbf{Assumption 1} (Approximability): For all $s\in[1,\infty)$,
\begin{align*}
\inf_{\VV\in\V^n}\norm{\vv-\VV}_{1,s}&\rightarrow0\qquad \qquad \forall\, \vv\in W^{1,s}_0(\Omega)^d\,\,\text{as}\,\,n\rightarrow\infty, \\
\inf_{Q\in\Q^n}\norm{q-Q}_s&\rightarrow0 \qquad \qquad \forall\, q\in L^s(\Omega)\,\,\text{as}\,\,n\rightarrow\infty,\\
\inf_{Z\in\Z^n}\norm{z-Z}_{1,s}&\rightarrow0\qquad \qquad \forall\, z\in W^{1,s}_0(\Omega)\,\,\text{as}\,\,n\rightarrow\infty.
\end{align*}
For this, a necessary condition is that the maximal mesh size vanishes, i.e. we have $\max_{E\in\G_n}h_E\rightarrow0$ as $n\rightarrow\infty$.

\textbf{Assumption 2} (Existence of a projection operator $\Pi^n_{\rm{div}}$): For each $n\in\mathbb{N}$, there exists a linear projection operator $\Pi^n_{\rm{div}}:W^{1,1}_0(\Omega)^d\rightarrow\V^n$ such that:
\begin{itemize}
\item $\Pi^n_{\rm{div}}$ preserves the divergence structure in the dual of the discrete pressure space, in other words, for any $\vv\in W^{1,1}_0(\Omega)^d$, we have
\[\langle{\rm{div}}\,\vv,Q\rangle=\langle{\rm{div}}\,\Pi^n_{\rm{div}}\vv,Q\rangle\qquad\forall\, Q\in\Q^n.\]
\item $\Pi^n_{\rm{div}}$ is locally $W^{1,1}$-stable, i.e., there exists a constant $c_1>0$, independent of $n$, such that
\begin{equation}\label{3.2}
\hspace{-4mm}\fint_E|\Pi^n_{\rm{div}}\vv|+h_E|\nabla\Pi^n_{\rm{div}}\vv|\dx\leq c_1\fint_{S_E}|\vv|+h_E|\nabla \vv|\dx\qquad\forall\, \vv\in W^{1,1}_0(\Omega)^d\,\,\,{\rm{and}}\,\,\,\forall\, E\in \G_n.
\end{equation}

\end{itemize}
Note that \eqref{3.2} together with Poincar\'e's inequality implies a simpler version of local $W^{1,1}(\Omega)^d$-stability:
\begin{equation}\label{simple}
\fint_E|\nabla\Pi^n_{\rm{div}}\vv|\dx\leq c\fint_{S_E}|\nabla \vv|\dx.
\end{equation}
Note further that the local $W^{1,1}(\Omega)^d$-stability of $\Pi^n_{\rm{div}}$ implies its local and global $W^{1,s}(\Omega)^d$-stability for $s\in[1,\infty]$. Indeed, since the function $t\mapsto t^s$ {\color{black}{is a convex function for $s\in[1,\infty)$, by the equivalence of norms in finite-dimensional spaces, standard scaling arguments and Jensen's inequality, we have that}}
\begin{align*}
|\Pi^n_{\rm{div}}\vv(x)|+h_E|\nabla\Pi^n_{\rm{div}} \vv(x)|
&\leq \|\Pi^n_{\rm{div}}\vv(x)\|_{L^{\infty}(E)}+h_E\norm{\nabla\Pi^n_{\rm{div}}\vv}_{L^{\infty}(E)}\\
&\leq c\fint_E |\Pi^n_{\rm{div}}\vv(x)|+h_E|\nabla\Pi^n_{\rm{div}} \vv|\dx \\
&\leq c\fint_{S_E} |\vv(x)|+h_E|\nabla \vv|\dx\\
&\leq c\left(\fint_{S_E}|\vv(x)|^s+h^s_E|\nabla \vv|^s\dx\right)^{\frac{1}{s}}
\qquad\forall\, E\in \G_n.
\end{align*}
Raising this inequality to the $s$-th power and integrating over $E$ gives
\begin{align}\label{sta_vs}
\int_E |\Pi^n_{\rm{div}}\vv|^s+h^s_E|\nabla \Pi^n_{\rm{div}} \vv|^s\dx\leq c\int_{S_E}|\vv|^s+h^s_E|\nabla \vv|^s\dx.
\end{align}
Summing over all elements $E\in \G_n$, considering the locally finite overlap of patches (which is the consequence of the assumed shape-regularity of $\G_n$), and Poincar\'e's inequality implies, for any $s\in[1,\infty)$, that
\begin{equation}\label{sta_v}
\norm{\Pi^n_{\rm{div}} \vv}_{1,s}\leq c_s\norm{\vv}_{1,s}\qquad\forall\, \vv\in W^{1,s}_0(\Omega)^d,
\end{equation}
with a constant $c_s>0$ independent of $n>0$. With a similar argument, the same inequality can be derived for $s=\infty$. Note further that the approximability (Assumption 1) and inequality \eqref{sta_v} imply the convergence of $\Pi^n_{\rm{div}} \vv$ to $\vv$. In fact, for any $\VV\in\V^n$,
\begin{align*}
\|\vv-\Pi^n_{\rm{div}} \vv\|_{1,s}
&\leq\norm{\vv-\VV}_{1,s}+\|\VV-\Pi^n_{\rm{div}} \vv\|_{1,s}\\
&=\norm{\vv-\VV}_{1,s}+\|\Pi^n_{\rm{div}}(\vv-\VV)\|_{1,s}\\
&\leq\norm{\vv-\VV}_{1,s}+c_s\norm{\vv-\VV}_{1,s}.
\end{align*}
Hence we have
\begin{equation}\label{v_conv}
\|\vv-\Pi^n_{\rm{div}} \vv\|_{1,s}\leq C\inf_{\VV\in\V^n}\|\vv-\VV\|_{1,s}\rightarrow0\qquad\forall\, \vv\in W^{1,s}_0(\Omega)^d\,\,\text{as}\,\,n\rightarrow\infty.
\end{equation}

\textbf{Assumption 3} (Existence of a projection operator $\Pi^n_{\Q}$): For each $n\in\mathbb{N}$, there exists a linear projection operator $\Pi^n_{\Q}:L^1(\Omega)\rightarrow\Q^n$ such that $\Pi^n_{\Q}$ is locally $L^1$-stable; i.e., there exists a constant $c_2>0$, independent of $n$, such that
\begin{equation}\label{Q_L1sta}
\fint_E|\Pi^n_{\Q}q|\dx\leq c_2\fint_{S_E}|q|\dx
\end{equation}
for all $q\in L^1(\Omega)$ and all $E\in\G_n$.

Note that with the same argument as above, we have
\begin{equation}\label{Q_sta1}
\int_E|\Pi^n_{\Q}q|^{s'}\dx\leq c_{s'}\int_{S_E}|q|^{s'}\dx\qquad \forall\, E\in\G_n,\quad \forall\, q\in L^{s'}(\Omega), \quad\forall\, s'\in(1,\infty),
\end{equation}
and summing over all $E\in\G_n$ yields
\begin{equation}\label{q_sta}
\|\Pi^n_{\Q}q\|_{s'}\leq c_{s'}\|q\|_{s'}\qquad \forall\, q\in L^{s'}(\Omega), \quad\forall\, s'\in(1,\infty).
\end{equation}
Also, the stability of $\Pi^n_{\Q}$ and Assumption 1 imply that $\Pi^n_{\Q}$ satisfies
\begin{equation}\label{q_conv}
\|q-\Pi^n_{\Q}q\|_{s'}\rightarrow0,\qquad{\rm{as}}\,\,\,n\rightarrow\infty\,\,\,{\rm{for}}\,\,\,{\rm{all}}\,\,\,q\in L^{s'}(\Omega)\,\,\,{\rm{and}}\,\,\,s'\in(1,\infty).
\end{equation}

\begin{remark}
{\color{black}{According to \cite{BBDR2012}, the following pairs of velocity-pressure finite element spaces satisfy Assumptions 1, 2 and 3, for example:}}
\begin{itemize}
\item {\color{black}{The conforming Crouzeix--Raviart Stokes element, i.e., continuous piecewise quadratic plus cubic bubble velocity and discontinuous piecewise linear pressure approximation}} (compare e.g. with \cite{BF2012});
\item The space of continuous piecewise quadratic polynomials for the velocity and piecewise constant pressure approximation; see, \cite{BF2012}.
\end{itemize}
\end{remark}

Our final assumption is the existence of a projection operator for the concentration space.

\textbf{Assumption 4} (Existence of a projection operator $\Pi^n_{\Z}$): For each $n\in\mathbb{N}$, there exists a linear projection operator $\Pi^n_{\Z}:W^{1,1}_0(\Omega)\rightarrow\Z^n$ such that
\[\fint_E|\Pi^n_{\Z}z|+h_E|\nabla\Pi^n_{\Z}z|\dx\leq c_3\fint_{S_E}|z|+h_E|\nabla z|\dx\qquad\forall\, z\in W^{1,1}_0(\Omega)\,\,\,{\rm{and}}\,\,\,\forall\, E\in \G_n,\]
where $c_3$ does not depend on $n$.

Similarly as above, the projection operator $\Pi^n_{\Z}$ is globally $W^{1,s}$-stable for $s\in[1,\infty]$, and thus, by approximability,
\begin{equation}\label{z_conv}
\|\Pi^n_{\Z}z-z\|_{1,s}\rightarrow0 \qquad\forall\, z\in W^{1,s}_0(\Omega).
\end{equation}
\end{subsection}

\begin{subsection}{Stability of projection operators in variable-exponent spaces}
In this subsection, we shall state and prove some important auxiliary results regarding projection operators in the variable-exponent context. A first key step is to prove stability of the projection operator $\Pi^n_{\rm{div}}$. The main difficulty lies in the fact that we are dealing with variable-exponent spaces, so several classical results are not applicable.

To overcome this problem, we need a technical tool concerning variable-exponent spaces, which is also called the {\it{key estimate}}. We begin with a brief introduction to the {\it{key estimate}}.

In recent years, the field of variable-exponent spaces $L^{r(\cdot)}$ has been the subject of active research. A major breakthrough was the identification of the condition on the exponent $r$
which guarantees boundedness of the Hardy-Littlewood maximal operator $M$ on $L^{r(\cdot)}$: {\it{log-H\"older-continuity}}, which then enables the use of  tools from harmonic analysis.
The motivation for the {\it{key estimate}} comes from the integral version of Jensen's inequality, which states that, for every real-valued convex function $\psi$ defined on $[0,\infty)$, and every cube $Q$, we have
\[\psi\left(\fint_Q|f(y)|\dy\right)\leq\fint_Q\psi(|f(y)|)\dy.\]
{\color{black}{Therefore, we need to identify a suitable substitute for Jensen's inequality in
the context of variable-exponent spaces, which is called the {\it{key estimate}}, and is stated in the next theorem; see \cite{DS2013}.
%
%
}}

\begin{theorem}\label{keyestimate}
{\rm{(Key estimate)}}. Let $r\in\mathcal{P}^{\rm{log}}(\R^d)$ with $r^+<\infty$. Then, for every $m>0$, there exists a constant $c_1>0$, which depends only on $m$, $C_{\rm{log}}(r)$ and $r^+$, such that
\begin{equation}\label{key}
\left(\fint_Q|f(y)|\dy\right)^{r(x)}\leq c_1\fint_Q|f(y)|^{r(y)}\dy+c_1|Q|^m
\end{equation}
for every cube (or ball) $Q\subset\R^n$ with $\ell(Q)\leq1$, all $x\in Q$ and all $f\in L^1(Q)$ with
\[\fint_Q|f|\dy\leq|Q|^{-m}.\]
\end{theorem}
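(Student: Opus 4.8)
The plan is to reduce to a single ``generic'' configuration by peeling off two degenerate regimes, then to use log-H\"older continuity to shift the exponent around inside $Q$ at the cost of a harmless multiplicative constant, and finally to close the estimate by a reverse-Jensen lower bound. I may assume $f\ge0$, and I write $\lambda:=\fint_Q f$; by hypothesis $\lambda\le|Q|^{-m}$.

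First I would dispose of two trivial cases. If $\ell(Q)$ is bounded below --- say $\mathrm{diam}(Q)>\tfrac12$ --- then $\ell(Q)$, hence $|Q|$, lies in a fixed interval $[\rho_0,1]$ with $\rho_0=\rho_0(d)>0$, so $|Q|^{-m}$ is bounded above and $|Q|^m$ is bounded below by constants depending only on $d,m$; thus $\lambda^{r(x)}\le\max(1,|Q|^{-mr^+})$ is bounded and \eqref{key} holds simply by taking $c_1$ large enough (the term $c_1|Q|^m$ does the job). If instead $\lambda\le|Q|^m$, then $\lambda\le1$ and $r(x)\ge1$ give $\lambda^{r(x)}\le\lambda\le|Q|^m$, and again we are done. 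Therefore from now on I may assume $\mathrm{diam}(Q)\le\tfrac12$ (equivalently $|Q|\le\rho_0(d)$) and $|Q|^m<\lambda\le|Q|^{-m}$, so that $\max(\lambda,1/\lambda)\le|Q|^{-m}$.

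Next comes the exponent-shift step. For $x,y\in Q$, the bound $\max(\lambda,1/\lambda)\le|Q|^{-m}$ gives $\lambda^{r(x)-r(y)}\le|Q|^{-m|r(x)-r(y)|}$; since $|x-y|\le\mathrm{diam}(Q)\le\tfrac12$ the log-H\"older inequality \eqref{logh} applies, and combining $|r(x)-r(y)|\le C_{\rm{log}}(r)/(-\log|x-y|)$ with $-\log|x-y|\ge\tfrac1{2d}(-\log|Q|)$ (valid once $|Q|$ is small, using $|x-y|\le\sqrt d\,|Q|^{1/d}$) yields $\lambda^{r(x)-r(y)}\le\exp\!\big(2dm\,C_{\rm{log}}(r)\big)=:c_0$, hence $\lambda^{r(x)}\le c_0\,\lambda^{r(y)}$ for every $y\in Q$. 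Now set $g:=f/\lambda$, so $\fint_Q g=1$, and I would establish the auxiliary lower bound $\fint_Q g(y)^{r(y)}\dy\ge 2^{-r^+}$: one of $\fint_Q g\chi_{\{g\ge1\}}\dy$ and $\fint_Q g\chi_{\{g<1\}}\dy$ is $\ge\tfrac12$, and in the first case, with $E:=\{g\ge1\}\cap Q$, using $g^{r(\cdot)}\ge g^{r^-}$ on $E$ and Jensen's inequality for the convex map $t\mapsto t^{r^-}$ on $E$ gives $\fint_Q g^{r(y)}\dy\ge 2^{-r^-}(|E|/|Q|)^{1-r^-}\ge 2^{-r^-}$ (using $|E|\le|Q|$ and $1-r^-\le0$); the other case is identical with $r^+$ and $\{g<1\}$ in place of $r^-$ and $E$. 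Combining this with the exponent shift,
\[
\fint_Q f(y)^{r(y)}\dy=\fint_Q\lambda^{r(y)}g(y)^{r(y)}\dy\ge c_0^{-1}\lambda^{r(x)}\fint_Q g(y)^{r(y)}\dy\ge c_0^{-1}2^{-r^+}\lambda^{r(x)},
\]
which rearranges to $\big(\fint_Q f\big)^{r(x)}=\lambda^{r(x)}\le 2^{r^+}c_0\,\fint_Q f(y)^{r(y)}\dy$; this is \eqref{key} (with no additive term --- that term is needed only in the two trivial cases above). One then takes $c_1$ to be the maximum of $2^{r^+}\exp(2dm\,C_{\rm{log}}(r))$ and the constants produced in the trivial cases, and notes that it depends only on the fixed $d$, on $m$, on $r^+$ and on $C_{\rm{log}}(r)$. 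For balls rather than cubes the argument is unchanged, since $\mathrm{diam}(Q)\sim|Q|^{1/d}$ in both cases.

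\textbf{Where the difficulty lies.} No step is deep; the real work is the bookkeeping around the hypotheses. Specifically, \eqref{logh} is only assumed for $|x-y|\le\tfrac12$, so one must genuinely quarantine the ``$\mathrm{diam}(Q)$ not small'' regime rather than invoking log-H\"older blindly; and the two-sided control $\max(\lambda,1/\lambda)\le|Q|^{-m}$ used in the exponent shift is precisely why the regime $\lambda\le|Q|^m$ has to be treated on its own --- the one-sided bound $\lambda\le|Q|^{-m}$ does not by itself tame $\lambda^{r(x)-r(y)}$ when $r(y)>r(x)$. I expect this case analysis, together with the explicit verification that $c_0$ depends only on $m$, $r^+$ and $C_{\rm{log}}(r)$ (and the fixed $d$), to be the only genuine obstacle.
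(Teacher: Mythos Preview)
The paper does not prove this theorem; it is quoted from \cite{DS2013} and used as a black box. So there is no in-paper proof to compare against, and your proposal stands as an independent, self-contained argument.

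Your argument is essentially correct. The three ingredients --- quarantining the two degenerate regimes, the log-H\"older exponent shift $\lambda^{r(x)-r(y)}\le c_0$, and the reverse-Jensen lower bound $\fint_Q g^{r(\cdot)}\ge 2^{-r^+}$ --- combine cleanly, and the constant tracking is accurate (no hidden $r^-$-dependence sneaks in, since the Case~1 bound $2^{-r^-}$ is itself $\ge 2^{-r^+}$). The reverse-Jensen step is a nice touch: the more common route (as in the Diening--Harjulehto--H\"ast\"o--R{\r u}{\v z}i{\v c}ka monograph) instead passes through the constant exponent $r_Q^-:=\inf_Q r$, uses classical Jensen for $t\mapsto t^{r_Q^-}$ together with the pointwise bound $f^{r_Q^-}\le f^{r(\cdot)}+1$, and then controls the factor $(\fint_Q f)^{r(x)-r_Q^-}$ via log-H\"older. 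Your approach avoids introducing $r_Q^-$ at the cost of the extra splitting $\{g\ge1\}\cup\{g<1\}$; both are short.

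One genuine bookkeeping point: your first ``trivial'' threshold $\mathrm{diam}(Q)>\tfrac12$ is not quite aligned with what you need later. The inequality $-\log|x-y|\ge\tfrac{1}{2d}(-\log|Q|)$ follows from $|x-y|\le\sqrt{d}\,|Q|^{1/d}$ only once $|Q|\le d^{-d}$ (equivalently $\ell(Q)\le 1/d$), which for $d\ge2$ is strictly stronger than $\mathrm{diam}(Q)\le\tfrac12$. The fix is trivial --- take the first degenerate regime to be $\ell(Q)>\min(1/d,\,1/(2\sqrt d))$, so that in the main case both $\mathrm{diam}(Q)\le\tfrac12$ (for \eqref{logh}) and $|Q|\le d^{-d}$ hold --- but as written the two thresholds do not match. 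You flag this yourself (``valid once $|Q|$ is small''), so just make the cut-off consistent from the start.
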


As a next step, we shall prove the stability of the projection operator $\Pi^n_{\rm{div}}$ in the variable-exponent context.

\begin{proposition}\label{sta_variable}
Let $r\in\mathcal{P}^{\rm{log}}(\R^d)$ with $r^+<\infty$. Then, there exists a constant $C>0$, which depends on $\Omega$, $C_{\rm{log}}(r)$ and $r^+$, such that, for all $\vv\in W^{1,r(\cdot)}_0(\Omega)^d$,
\[\int_{\Omega}|\nabla\Pi^n_{\rm{div}}\vv(x)|^{r(x)}\dx\leq C\int_{\Omega}|\nabla \vv(x)|^{r(x)}\dx+C\max_{E\in\G_n}h_E^{d+1}.\]
\end{proposition}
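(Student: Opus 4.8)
The plan is to localize the estimate element-by-element, apply the local $W^{1,1}$-stability \eqref{simple} of $\Pi^n_{\rm{div}}$ on each patch $S_E$, and then pass from the $L^1$-average to the variable-exponent integrand by invoking the \emph{key estimate} (Theorem~\ref{keyestimate}). Fix $\vv\in W^{1,r(\cdot)}_0(\Omega)^d$; by density (and by the already-established convergence \eqref{v_conv}) it suffices to argue for smooth $\vv$ and then pass to the limit, so all the averages below are finite. Fix an element $E\in\G_n$ and a point $x\in E$. First I would use the fact that $\nabla\Pi^n_{\rm{div}}\vv$ restricted to $E$ is a polynomial of bounded degree: by equivalence of norms on finite-dimensional spaces together with a scaling argument, $|\nabla\Pi^n_{\rm{div}}\vv(x)|\le c\,\fint_E|\nabla\Pi^n_{\rm{div}}\vv|\dy$, and then \eqref{simple} gives $|\nabla\Pi^n_{\rm{div}}\vv(x)|\le c\,\fint_{S_E}|\nabla\vv|\dy$. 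Raising to the power $r(x)$,
\[
|\nabla\Pi^n_{\rm{div}}\vv(x)|^{r(x)}\le c^{r^+}\paren{\fint_{S_E}|\nabla\vv(y)|\dy}^{r(x)}.
\]

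The central step is to control the right-hand side by $\fint_{S_E}|\nabla\vv(y)|^{r(y)}\dy$ plus an error. This is exactly where Theorem~\ref{keyestimate} enters, applied on the ball/cube containing $S_E$ (using shape-regularity and $|S_E|\le m|E|$, so $S_E$ is comparable to a cube of side $\ell\le \mathrm{diam}(\Omega)$, which we may assume $\le 1$ after a harmless rescaling of $\Omega$). The key estimate requires the smallness hypothesis $\fint_{S_E}|\nabla\vv|\dy\le|S_E|^{-m}$. To handle this I would split into two cases at each element. If the hypothesis holds, Theorem~\ref{keyestimate} (with, say, $m=1$, and noting $r(x)$ for $x\in E\subset S_E$) yields
\[
\paren{\fint_{S_E}|\nabla\vv(y)|\dy}^{r(x)}\le c_1\fint_{S_E}|\nabla\vv(y)|^{r(y)}\dy + c_1|S_E|.
\]
If instead $\fint_{S_E}|\nabla\vv|\dy>|S_E|^{-m}$, then $|S_E|<\paren{\fint_{S_E}|\nabla\vv|}^{-1/m}$ is tiny and one can absorb the whole contribution: since $|\nabla\vv|\ge 1$ on a set of positive measure in $S_E$ forces $\fint_{S_E}|\nabla\vv|^{r(y)}\dy$ to be comparable to $\fint_{S_E}|\nabla\vv|^{r(y)}\dy$ anyway — more carefully, one bounds $\paren{\fint_{S_E}|\nabla\vv|}^{r(x)}\le\paren{\fint_{S_E}|\nabla\vv|}^{r^+}\le \fint_{S_E}|\nabla\vv|^{r^+}\le$ (by Hölder with the variable exponent, since on this "bad" element the average is large) a constant multiple of $\fint_{S_E}|\nabla\vv|^{r(y)}\dy$, using that where $|\nabla\vv|\ge1$ we have $|\nabla\vv|^{r^+}\le|\nabla\vv|\cdot|\nabla\vv|^{r^+-1}$ and trading the excess integrability against the $r(y)$ exponent; I expect this case to be dispatched cleanly, perhaps by simply choosing $m$ large enough that the bad case contributes at most $\max_E h_E^{d+1}$ after summation. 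Either way, the element-wise bound becomes
\[
\int_E|\nabla\Pi^n_{\rm{div}}\vv(x)|^{r(x)}\dx\le c\,|E|\fint_{S_E}|\nabla\vv(y)|^{r(y)}\dy + c\,|E|\,|S_E|\le c\int_{S_E}|\nabla\vv(y)|^{r(y)}\dy + c\,h_E^{d}\cdot h_E,
\]
where I used $|E|\le|S_E|\le m|E|\le c\,h_E^d$ and $|S_E|\le c\,h_E^d\le c\,h_E$ is not quite enough — rather $|E||S_E|\le c h_E^{2d}$; in dimension $d=2,3$ this is $\le c h_E^{d+1}$ only for $d\le?$, so more precisely one keeps the bound $|E||S_E|\le c\,h_E^{d+1}$ by noting $|E||S_E|\le c h_E^d\cdot h_E^d$ and bounding one factor $h_E^{d-1}\le \mathrm{diam}(\Omega)^{d-1}$ and the other by $h_E$, giving $c\,h_E\le c\,h_E^{d+1}$ after re-inserting — in any case the precise bookkeeping produces the stated $h_E^{d+1}$.

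Finally I would sum over all $E\in\G_n$. The main term $\sum_E\int_{S_E}|\nabla\vv|^{r(y)}\dy\le C\int_\Omega|\nabla\vv|^{r(y)}\dy$ by the finite-overlap property of the patches $\{S_E\}$ (itself a consequence of shape-regularity, the constant $m$). The error term sums to $\sum_E c\,h_E^{d+1}\le c\,(\max_E h_E)\sum_E h_E^d\le c\,(\max_E h_E)\,|\Omega|\le C\max_{E\in\G_n}h_E^{d+1}$ if one is slightly less lossy — or, keeping the worst element, $\le C\,N_n\max_E h_E^{d+1}$; to land exactly on $C\max_E h_E^{d+1}$ one instead notes $\sum_E h_E^{d+1}\le \max_E h_E \cdot \sum_E|E|\cdot c\le C\max_E h_E$, and absorbs the extra powers into the statement's constant, which is why the statement is written with $\max_E h_E^{d+1}$ (a crude but sufficient bound). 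The main obstacle I anticipate is precisely the verification of the smallness hypothesis of the key estimate uniformly over all elements and all admissible $\vv$: one cannot expect $\fint_{S_E}|\nabla\vv|\le|S_E|^{-m}$ to hold on every patch, so the two-case splitting (good patches handled by Theorem~\ref{keyestimate}, bad patches handled by a direct Hölder/absorption argument exploiting that bad patches are extremely small) is the technical heart of the argument. Everything else — the local stability \eqref{simple}, norm equivalence on polynomials, finite overlap — is routine.
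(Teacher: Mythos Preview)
Your plan is the paper's proof: localize to each element, combine the inverse inequality for polynomials with the local $W^{1,1}$-stability \eqref{simple} to reach $(\fint_{S_E}|\nabla\vv|)^{r(x)}$, apply the key estimate (Theorem~\ref{keyestimate}), and sum over $E$ using the finite-overlap property of the patches. The paper carries out precisely these four steps and nothing more; in particular it performs \emph{no} good/bad case split and simply invokes Theorem~\ref{keyestimate} without commenting on the hypothesis $\fint_Q|f|\le|Q|^{-m}$.

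Two remarks on the places where your write-up diverges from the paper. First, you are right to flag the hypothesis of the key estimate as the delicate point, but your proposed bad-case argument does not work: the step $\fint_{S_E}|\nabla\vv|^{r^+}\le C\fint_{S_E}|\nabla\vv|^{r(y)}$ is false in general, since $r^+\ge r(y)$ and the inequality $t^{r^+}\le t^{r(y)}$ fails for $t>1$. The standard resolution (which the paper suppresses) is to observe that whenever the modular $\int_\Omega|\nabla\vv|^{r(y)}\,\mathrm{d}y$ is a priori bounded---as it is in every application of the proposition in the paper---one has $\fint_{S_E}|\nabla\vv|\le|S_E|^{-1}\!\int_{S_E}(|\nabla\vv|^{r(y)}+1)\le C|S_E|^{-1}$, so the hypothesis holds for any $m>1$ once $h_E$ is small enough.

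Second, your error-term accounting is over-complicated. The paper simply writes the additive error from the key estimate as $Ch_E^{d+1}$ (choosing the parameter $m$ accordingly, since $|S_E|\sim h_E^d$), integrates this constant over $E$ to obtain $|E|\,h_E^{d+1}\le|E|\max_{E'\in\G_n}h_{E'}^{d+1}$, and sums over $E$ to get $|\Omega|\max_{E}h_E^{d+1}$. There is no need to juggle $|E||S_E|$ or to compare $h_E^{2d}$ with $h_E^{d+1}$.
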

\begin{proof}
For $E\in\G_n$, by equivalence of norms in finite-dimensional spaces and a standard scaling argument,
\begin{align*}
\int_E|\nabla\Pi^n_{\rm{div}}\vv|^{r(x)}\dx
&\leq C\int_E\left(\fint_E|\nabla\Pi^n_{\rm{div}}\vv(y)|\dy\right)^{r(x)}\dx
\leq C\int_E\left(\fint_{S_E}|\nabla \vv(y)|\dy\right)^{r(x)}\dx\\
&\leq C\int_E\left(\fint_{S_E}|\nabla \vv(y)|^{r(y)}\dy+Ch_E^{d+1}\right)\dx\\
&\leq C\int_E\fint_{S_E}|\nabla \vv(y)|^{r(y)}\dy\dx+C\, |E| \max_{E\in\G_n}h_E^{d+1}\\
&=C\int_{S_E}|\nabla\vv(y)|^{r(y)}\dy+C\, |E| \max_{E\in\G_n}h_E^{d+1},
\end{align*}
where we have used \eqref{simple} in the second inequality and \eqref{key} in the third inequality. Summing up the above inequalities over $E\in \G_n$, we have
\[\int_\Omega|\nabla\Pi^n_{\rm{div}}\vv(x)|^{r(x)}\dx\leq C\int_{\Omega}|\nabla \vv(x)|^{r(x)}\dx+C\,|\Omega|\max_{E\in\G_n}h_E^{d+1}.\]
That completes the proof.
\end{proof}

Next, we shall investigate the stability of the projection operator $\Pi^n_{\Q}$ in variable-exponent Lebesgue spaces. To this end we shall first present some auxiliary results. The first of these is referred to as the {\it{local-to-global result}}, which is a generalization of an analogous result in classical $L^r$ spaces. In the classical Lebesgue space setting the following is easily seen to hold:
\[\|f\|_r=\left(\sum_i\|\chi_{\Omega_i}f\|^r_r\right)^{\frac{1}{r}}=\norm{\sum_i\chi_{\Omega_i}\frac{\|\chi_{\Omega_i}f\|_r}{\|\chi_{\Omega_i}\|_r}}_r,\]
where $\Omega=\cup_i\Omega_i$ and $\Omega_i\cap\Omega_j\neq\emptyset$ for $i\neq j$.
This raises the question whether in a variable-exponent space $L^{r(\cdot)}(\Omega)$ one has
\[\|f\|_{r(\cdot)}\approx\norm{\sum_i\chi_{\Omega_i}\frac{\|\chi_{\Omega_i}f\|_{r(\cdot)}}{\|\chi_{\Omega_i}\|_{r(\cdot)}}}_{r(\cdot)}.\]
This statement is indeed true, provided that $r\in\mathcal{P}^{\rm{log}}$ and $\{\Omega_i\}$ is locally $N$-finite in the sense of the following definition.
\begin{definition}
Let $N\in\mathbb{N}$. A family $\mathcal{Q}$ of measurable sets $Q\subset\R^d$ is called {\it{locally $N$-finite}} if
\[\sum_{Q\in\mathcal{Q}}\chi_Q\leq N\]
almost everywhere in $\R^d$.
\end{definition}

Let us now state the norm-equivalence theorem precisely; for its proof, see Chapter 7 in \cite{DHHR2011}.
\begin{theorem}\label{local-global}
Let $r\in\mathcal{P}^{\rm{log}}(\R^d)$ and let $\mathcal{Q}$ be a locally $N$-finite family of cubes or balls $Q\subset\R^n$. Then,
\[\norm{\sum_{Q\in\mathcal{Q}}\chi_Qf}_{r(\cdot)}\approx\norm{\sum_{Q\in\mathcal{Q}}\chi_Q\frac{\|\chi_Qf\|_{r(\cdot)}}{\|\chi_Q\|_{r(\cdot)}}}_{r(\cdot)}\]
for all $f\in L^{r(\cdot)}_{\rm{loc}}(\R^n)$. The constants, not explicitly indicated in this norm-equivalence (henceforth referred to as `implicit constants'), only depend on $C_{\rm{log}}(r)$, $d$ and $N$.
\end{theorem}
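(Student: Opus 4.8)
The plan is to prove the two inequalities hidden in $\approx$ separately, in each case passing to the modular level via the unit-ball property of the Luxembourg norm: $\|u\|_{r(\cdot)}\le1$ if and only if $\rho(u):=\int_{\R^d}|u(x)|^{r(x)}\dx\le1$, and, since $r^+<\infty$, $\rho(u/\|u\|_{r(\cdot)})=1$ whenever $0<\|u\|_{r(\cdot)}<\infty$. Write $g:=\sum_{Q\in\mathcal Q}\chi_Qf$, $h:=\sum_{Q\in\mathcal Q}a_Q\chi_Q$ with $a_Q:=\|\chi_Qf\|_{r(\cdot)}/\|\chi_Q\|_{r(\cdot)}$, and $\rho_Q(u):=\int_Q|u(x)|^{r(x)}\dx$. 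First I would reduce to summed local modulars: since every point lies in at most $N$ members of $\mathcal Q$, the elementary bound $\big(\sum_{i\le k}t_i\big)^{s}\le k^{s-1}\sum_it_i^{s}$ for $s\ge1$ gives $\rho(g)\approx\sum_{Q}\rho_Q(f)$ and $\rho(h)\approx\sum_Q\int_Qa_Q^{r(y)}\dy$. By the standard scaling reduction one may also assume $\ell(Q)\le1$ for every $Q\in\mathcal Q$ (splitting larger cubes into unit subcubes, which preserves local $N$-finiteness up to a dimensional factor), so that both the log-H\"older machinery and Theorem~\ref{keyestimate} are available on each cube.

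The heart of the argument is a per-cube comparison: for each cube $Q$ with $\ell(Q)\le1$ and a parameter $m>0$ at our disposal,
\[\rho_Q(f)\;\lesssim\;\int_Qa_Q^{r(y)}\dy+c_1|Q|^{1+m}\qquad\text{and}\qquad\int_Qa_Q^{r(y)}\dy\;\lesssim\;\rho_Q(f)+c_1|Q|^{1+m}.\]
The unit-ball property applied on $Q$ supplies the normalisations $\rho_Q\big(f/\|\chi_Qf\|_{r(\cdot)}\big)=1=\rho_Q\big(\chi_Q/\|\chi_Q\|_{r(\cdot)}\big)$, and log-H\"older continuity \eqref{logh} together with $\ell(Q)\le1$ pins down $\|\chi_Q\|_{r(\cdot)}\approx|Q|^{1/r(x_0)}$ for any $x_0\in Q$, with implicit constant controlled by $C_{\rm{log}}(r)$ and $d$ (the $r^-$ present in that estimate being harmless since $r^-\ge1$). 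Hence $a_Q$ is, up to such constants, the average-type object $|Q|^{-1/r(x_0)}\|\chi_Qf\|_{r(\cdot)}$, so $a_Q^{r(y)}$ is literally an exponent $r(y)$ applied to an average; Theorem~\ref{keyestimate}, used once the normalisation $\fint_Q|f|\le|Q|^{-m}$ is in force, converts it into $a_Q^{r(y)}\lesssim\fint_Q|f(z)|^{r(z)}\,\mathrm{d}z+|Q|^{m}$ for $y\in Q$, and — applied with the roles of $f$ and the constant $a_Q$ interchanged — also yields the reverse comparison. Integrating over $Q$ gives the displayed inequalities.

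To conclude, I would sum the per-cube bounds over $Q\in\mathcal Q$, use the local $N$-finiteness of $\mathcal Q$ to reassemble $\sum_Q\rho_Q(\cdot)$ into a global modular via the first-step equivalences, and combine this with the global normalisation $\rho(g)\le1$ (respectively $\rho(h)\le1$). The latter forces $\rho_Q(f)\lesssim1$ on every cube, which lets one split $\mathcal Q$ into the ``good'' cubes (those with $\fint_Q|f|\le|Q|^{-m}$, where Theorem~\ref{keyestimate} applies) and the remaining ``bad'' ones (whose local modulars are already comparably large, so their contribution to $\sum_Q\int_Qa_Q^{r(y)}\dy$ is dominated directly by $\sum_Q\rho_Q(f)$). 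One then obtains $\rho(h)\lesssim\rho(g)+C\sum_Q|Q|^{1+m}$ together with its reverse, and choosing $m$ large enough — and scaling $f$ so that the normalisation threshold is met — makes $\sum_Q|Q|^{1+m}\lesssim1$; the unit-ball property then turns the modular inequalities back into the asserted norm equivalence, with implicit constants depending only on $C_{\rm{log}}(r)$, $d$ and $N$. (An alternative to the whole scheme is a duality argument: by Theorem~\ref{Riesz}, $\|g\|_{r(\cdot)}\approx\sup_{\|\varphi\|_{r'(\cdot)}\le1}\int g\varphi$, and testing against $\varphi$ while applying the variable-exponent H\"older inequality on each cube identifies the relevant quantity as a pairing of $h$ with a discretised test function; this, however, needs a reverse H\"older estimate on a single cube, which is again a Jensen-type statement requiring \eqref{key}.)

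The main obstacle is exactly the substitute for Jensen's integral inequality. In $L^{r(\cdot)}$ the averaging map $f\mapsto h$ is, at the elementary level, comparable to the identity in norm in neither direction, and controlling it in both directions is precisely what Theorem~\ref{keyestimate} is designed to do; but invoking it forces the normalisation $\fint_Q|f|\le|Q|^{-m}$, and the bookkeeping required to reduce to that case — the good/bad-cube dichotomy, keeping the error terms $\sum_Q|Q|^{1+m}$ summable (which is where the $\ell(Q)\le1$ reduction is used), and tracking that the implicit constants stay dependent only on $C_{\rm{log}}(r)$, $d$ and $N$ — is the technical weight of the proof.
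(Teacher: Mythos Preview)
The paper does not actually prove this theorem; it merely states the result and refers the reader to Chapter~7 of \cite{DHHR2011} for the proof. There is therefore no in-paper argument to compare your proposal against.

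Your sketch follows the general spirit of a modular argument, but there is a genuine gap in the handling of the error terms. You assert that ``choosing $m$ large enough \dots\ makes $\sum_Q|Q|^{1+m}\lesssim1$'', but this is false for a general locally $N$-finite family on $\R^d$: the integer grid of unit cubes is locally $2^d$-finite and satisfies $\sum_Q|Q|^{1+m}=\sum_Q1=\infty$ for every $m>0$. Your proposed reduction to $\ell(Q)\le1$ by subdividing larger cubes does not help here (the grid already has $\ell(Q)=1$), and in any case such a subdivision does not preserve the quantity $\sum_Q a_Q\chi_Q$ you need to estimate, since the coefficients $a_Q$ are tied to the original cubes, not to their pieces.

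The proof in \cite{DHHR2011} circumvents this by working with the full definition of $\mathcal{P}^{\rm log}(\R^d)$, which on the whole space includes, in addition to the local bound \eqref{logh} quoted here, a log-decay condition at infinity, $|r(x)-r_\infty|\le C/\log(e+|x|)$. This extra hypothesis feeds into a sharpened version of the key estimate in which the error term carries an integrable weight of the type $(e+|x|)^{-m}$ rather than the bare $|Q|^m$ of Theorem~\ref{keyestimate}; it is precisely this that makes the accumulated error globally summable. Without that ingredient (or an equivalent device, such as restricting to families supported in a fixed bounded set) the scheme you outline does not close.
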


To be able to make use of the formula appearing on the right-hand side of the norm-equivalence stated in Theorem \ref{local-global}, we need to compute the variable-exponent norm $\|\chi_Q\|_{r(\cdot)}$ of the characteristic function
$\chi_Q$. Some related results are presented in Chapter 4 of \cite{DHHR2011}; what we need here is the following theorem
stated therein.
\begin{theorem}\label{charnorm}
Let $r\in\mathcal{P}^{\rm{log}}(\R^d)$. Then, for every cube or ball $Q\subset\R^d$,
\[\|\chi_Q\|_{r(\cdot)}\approx|Q|^{\frac{1}{r(x)}}\qquad{\rm{if}}\,\,\,|Q|\leq2^d\,\,\,{\rm{and}}\,\,\,x\in Q.\]
The implicit constants only depend on $C_{\rm{log}}(r)$.
\end{theorem}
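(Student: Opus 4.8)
The plan is to estimate the modular $\rho_{r(\cdot)}(\chi_Q/\lambda)=\int_Q\lambda^{-r(y)}\dy$ directly at the candidate value $\lambda_0\defeq|Q|^{1/r(x)}$, and then convert the resulting two-sided bound into the claimed norm equivalence by the norm--modular unit-ball property. First, fixing $x\in Q$ one computes
\[
\rho_{r(\cdot)}\!\paren{\frac{\chi_Q}{\lambda_0}}=\int_Q|Q|^{-r(y)/r(x)}\dy=|Q|^{-1}\int_Q|Q|^{\frac{r(x)-r(y)}{r(x)}}\dy ,
\]
so the heart of the matter is to show $|Q|^{(r(x)-r(y))/r(x)}\approx1$, uniformly over $y\in Q$ and over all cubes or balls $Q$ with $|Q|\le2^d$, with implicit constants controlled by $C_{\rm{log}}(r)$ (and $d$, $r^\pm$). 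Writing $\ell\defeq|Q|^{1/d}$, so that $\ell\le2$ and $|x-y|\le\sqrt d\,\ell$, one has
\[
\abs{\frac{r(x)-r(y)}{r(x)}\log|Q|}\leq\frac{d}{r^-}\,\abs{r(x)-r(y)}\,\abs{\log\ell}.
\]

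I would then split into two regimes. When $\ell$ is small, so that $\sqrt d\,\ell\le\tfrac12$, the log-H\"older bound \eqref{logh} gives $|r(x)-r(y)|\le C_{\rm{log}}(r)/(-\log(\sqrt d\,\ell))$, and since $(-\log\ell)/(-\log(\sqrt d\,\ell))$ stays bounded on this range, the right-hand side above is bounded by a constant depending only on $C_{\rm{log}}(r)$, $d$, $r^-$. When $\ell$ is of order one, $\sqrt d\,\ell\in(\tfrac12,2]$, then $|\log\ell|$ is bounded and $|r(x)-r(y)|\le r^+-r^-$, so the product is again bounded. Consequently $|Q|^{(r(x)-r(y))/r(x)}\approx1$, whence $\rho_{r(\cdot)}(\chi_Q/\lambda_0)\approx|Q|^{-1}\cdot|Q|=1$; the adaptation of this geometric estimate from cubes to balls is routine.

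It remains to pass from this modular estimate to the norm estimate. From $\rho_{r(\cdot)}(\chi_Q/\lambda_0)\le C_1$ and the elementary scaling inequality $\rho_{r(\cdot)}(g/\mu)\le\mu^{-r^-}\rho_{r(\cdot)}(g)$ for $\mu\ge1$, taking $\mu\defeq\max(1,C_1)^{1/r^-}$ yields $\rho_{r(\cdot)}(\chi_Q/(\mu\lambda_0))\le1$, so by the unit-ball property $\norm{\chi_Q}_{r(\cdot)}\le\mu\lambda_0\le C|Q|^{1/r(x)}$. For the lower bound, if $\norm{\chi_Q/\lambda_0}_{r(\cdot)}\le1$ then $\rho_{r(\cdot)}(\chi_Q/\lambda_0)\le\norm{\chi_Q/\lambda_0}_{r(\cdot)}^{r^-}$, so $\norm{\chi_Q}_{r(\cdot)}\ge c_1^{1/r^-}\lambda_0$, while if $\norm{\chi_Q/\lambda_0}_{r(\cdot)}>1$ the bound $\norm{\chi_Q}_{r(\cdot)}\ge\lambda_0$ is immediate. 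Combining gives $\norm{\chi_Q}_{r(\cdot)}\approx|Q|^{1/r(x)}$; under the standing hypothesis $1<r^-\le r^+<\infty$ the dependence of the implicit constants on $r^\pm$ and $d$ is absorbed, leaving only dependence on $C_{\rm{log}}(r)$.

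I expect the genuine obstacle to be the uniform two-sided bound on $|Q|^{(r(x)-r(y))/r(x)}$: the small-cube regime is where log-H\"older continuity is indispensable, since it must exactly counterbalance the divergence of $|\log|Q||$, and one must keep careful track that all constants depend only on the log-H\"older data. Everything else --- the modular computation and the modular-to-norm conversion via the standard scaling inequalities --- is routine.
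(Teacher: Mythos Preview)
The paper does not prove this statement; it is quoted without proof from Chapter~4 of \cite{DHHR2011}. Your argument is the standard one found there: evaluate the modular at the candidate $\lambda_0=|Q|^{1/r(x)}$, use log-H\"older continuity to show $|Q|^{(r(x)-r(y))/r(x)}\approx1$ uniformly over $y\in Q$, and convert to a norm estimate via the unit-ball property.

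One remark on the constants. Your final sentence overstates matters: the standing hypothesis $1<r^-\le r^+<\infty$ makes the constants finite but does not make the dependence on $r^\pm$ disappear. The $r^-$-dependence is in fact avoidable, since $r(x)\ge1$ already gives $1/r(x)\le1$, and the modular scaling in the norm conversion can be run with exponent $1$ rather than $r^-$; the $d$-dependence is geometric and harmless. However, in the large-$\ell$ regime (where, incidentally, $\ell\le2$ gives $\sqrt d\,\ell\le 2\sqrt d$, not $2$) you bound $|r(x)-r(y)|$ by $r^+-r^-$; since the local condition \eqref{logh} says nothing for $|x-y|>\tfrac12$, some such control is genuinely needed. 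In \cite{DHHR2011} the class $\mathcal P^{\log}(\R^d)$ also carries a log-H\"older decay condition at infinity, which supplies the missing bound and yields constants depending only on the log-H\"older data; under your argument as written the implicit constant also depends on $r^+$, which is acceptable for the paper's purposes but slightly weaker than what is asserted.
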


Finally, we need the next lemma, which will be useful for computing a variable-exponent norm locally. To state it, we define a piecewise constant approximation of a given exponent $r(\cdot)$ by
\[r_{\rm{loc}}\defeq\sum_{E\in\G_n}r(x_E)\chi_E=\sum_{E\in\G_n}r_E\chi_E,\]
where $x_E\defeq{\rm{arg\,min}}_Er$, i.e., $r_E:=r(x_E) \leq r(x)$ for all $x \in E$. {\color{black}{What we need here is the fact that the norms $\|\cdot\|_{r(\cdot)}$ and $\|\cdot\|_{r_{\rm{loc}}(\cdot)}$ are equivalent.
To this end, we quote the following result from \cite{BBD}.}}
\begin{lemma}\label{normeq}
The norms $\|\cdot\|_{r_{\rm{loc}}(\cdot)}$ and $\|\cdot\|_{r(\cdot)}$ are equivalent on $\Q^n$.
\end{lemma}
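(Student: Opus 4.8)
The plan is to prove the norm-equivalence $\|Q\|_{r_{\rm loc}(\cdot)} \approx \|Q\|_{r(\cdot)}$ on $\Q^n$ by combining the three preceding tools: the local-to-global principle (Theorem \ref{local-global}), the estimate for the norm of a characteristic function (Theorem \ref{charnorm}), and the key estimate (Theorem \ref{keyestimate}). The rough idea is that both norms, when localized element by element using Theorems \ref{local-global} and \ref{charnorm}, reduce to expressions involving $|E|^{1/r(x)}$ versus $|E|^{1/r_E}$ on each $E\in\G_n$, and by log-H\"older continuity these two quantities differ only by a bounded multiplicative factor, because $\bigl||E|^{1/r(x)-1/r_E}\bigr| = |E|^{|1/r(x)-1/r_E|}$ and $|1/r(x)-1/r_E| \le C_{\rm log}(r)/(-\log h_E) \cdot (\text{something})$ controls the exponent so that $h_E^{|1/r(x)-1/r_E|}$ stays bounded. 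Since the partition $\G_n$ is shape-regular, the family $\{S_E\}_{E\in\G_n}$ is locally $N$-finite with $N$ independent of $n$, so the implicit constants in Theorem \ref{local-global} are uniform in $n$.

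First I would fix $Q\in\Q^n$, and without loss of generality rescale the domain so that $\ell(E)\le 1$ (equivalently $|E|\le 2^d$) for every $E\in\G_n$ — otherwise pass to a fine enough $n$, or absorb the finitely many coarse levels into constants. Then I would apply Theorem \ref{local-global} with the family $\mathcal{Q}=\{E : E\in\G_n\}$ (which is disjoint up to measure zero, hence $1$-finite) to both $\|\cdot\|_{r(\cdot)}$ and $\|\cdot\|_{r_{\rm loc}(\cdot)}$, obtaining
\[
\|Q\|_{r(\cdot)} \approx \Bigl\| \sum_{E\in\G_n} \chi_E \frac{\|\chi_E Q\|_{r(\cdot)}}{\|\chi_E\|_{r(\cdot)}} \Bigr\|_{r(\cdot)}, \qquad
\|Q\|_{r_{\rm loc}(\cdot)} \approx \Bigl\| \sum_{E\in\G_n} \chi_E \frac{\|\chi_E Q\|_{r_{\rm loc}(\cdot)}}{\|\chi_E\|_{r_{\rm loc}(\cdot)}} \Bigr\|_{r_{\rm loc}(\cdot)}.
\]
On a single element $E$, since $r_{\rm loc}\equiv r_E$ is constant there, $\|\chi_E Q\|_{r_{\rm loc}(\cdot)} = \|\chi_E Q\|_{L^{r_E}(E)}$ and $\|\chi_E\|_{r_{\rm loc}(\cdot)} = |E|^{1/r_E}$ exactly, while Theorem \ref{charnorm} gives $\|\chi_E\|_{r(\cdot)} \approx |E|^{1/r(x)}$ for $x\in E$. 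The next step is to compare $\|\chi_E Q\|_{r(\cdot)}$ with $\|\chi_E Q\|_{L^{r_E}(E)}$: since $r_E \le r(x)\le r^+$ on $E$ and $|E|$ is bounded, one has $\|g\|_{L^{r_E}(E)} \le C\,\|g\|_{L^{r(\cdot)}(E)}$ trivially, and the reverse inequality with a constant depending only on $r^+$, $C_{\rm log}(r)$, and $\mathrm{diam}(\Omega)$ follows from the variable-exponent H\"older inequality (embedding $L^{r(\cdot)}(E)\hookrightarrow L^{r_E}(E)$ and back, on a set of finite measure) — this is where I would invoke standard embedding estimates between variable-exponent and constant-exponent Lebesgue spaces on bounded sets. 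Collecting these, the inner coefficients $\|\chi_E Q\|_{r(\cdot)}/\|\chi_E\|_{r(\cdot)}$ and $\|\chi_E Q\|_{r_{\rm loc}(\cdot)}/\|\chi_E\|_{r_{\rm loc}(\cdot)}$ agree up to a bounded factor on each $E$, uniformly in $E$ and $n$.

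Finally I would compare the two outer norms $\|\cdot\|_{r(\cdot)}$ and $\|\cdot\|_{r_{\rm loc}(\cdot)}$ applied to the resulting piecewise-constant function $g:=\sum_E \chi_E a_E$. Since $0\le r_E\le r(x)$ on $E$, for the modular one has $\int_\Omega |g|^{r_E}\,\mathrm{d}x \le \int_\Omega |g|^{r(x)}\,\mathrm{d}x + |\Omega|$ after splitting into the regions where $|g|\le 1$ and $|g|>1$, giving $\|g\|_{r_{\rm loc}(\cdot)}\le C\|g\|_{r(\cdot)}$; the reverse direction is more delicate and is precisely where log-H\"older continuity enters — here I would either apply the key estimate of Theorem \ref{keyestimate} to $f=|g|$ on each $E$ (noting $g$ is constant on $E$, so $\fint_E|g|=|g|_E$ and the hypothesis $\fint_E|g|\le|E|^{-m}$ can be arranged by normalization, with the extra $c_1|E|^m$ term summing to $C\max_E h_E^{dm}$, absorbed into the equivalence) or cite the standard fact that $\||g|^{r(\cdot)/r_{\rm loc}(\cdot)}\|$-type corrections are controlled on log-H\"older partitions. \textbf{The main obstacle} I anticipate is this last reverse modular comparison: controlling $\int_E |g|^{r(x)}\,\mathrm{d}x$ by $C\int_E |g|^{r_E}\,\mathrm{d}x$ plus a negligible term requires that the oscillation $r(x)-r_E$ of the exponent over $E$ be small relative to $-\log h_E$, which is exactly the content of log-H\"older continuity and is the reason the hypothesis $r\in\mathcal{P}^{\rm log}$ is essential; everything else is bookkeeping with uniform constants guaranteed by shape-regularity. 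Since the reference \cite{BBD} already contains this argument, I would present the proof at the level of indicating which tools combine and defer the delicate modular estimate to that reference, or reproduce it in a few lines using Theorem \ref{keyestimate}.
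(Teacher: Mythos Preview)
The paper does not give its own proof of this lemma --- it simply quotes the result from \cite{BBD} --- so your sketch is a genuine attempt, and the overall idea (log-H\"older continuity controls the oscillation $r(x)-r_E$ on each element) is correct. There is, however, a real gap in your Step~2. You assert that the reverse inequality $\|\chi_E Q\|_{L^{r(\cdot)}(E)}\le C\|\chi_E Q\|_{L^{r_E}(E)}$ ``follows from the variable-exponent H\"older inequality (embedding $L^{r(\cdot)}(E)\hookrightarrow L^{r_E}(E)$ and back)''. This is false for general functions: since $r_E\le r(x)$ on $E$, H\"older gives only the forward embedding $L^{r(\cdot)}(E)\hookrightarrow L^{r_E}(E)$; there is no ``and back''. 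A function $f=M\chi_F$ with $F\subset E$ small and $M$ large shows that $\|f\|_{L^{r(\cdot)}(E)}/\|f\|_{L^{r_E}(E)}$ is unbounded, regardless of log-H\"older continuity of $r$.

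What rescues the argument --- and what you never invoke --- is the hypothesis $Q\in\Q^n$: on each $E$ the function $Q$ lies in a fixed finite-dimensional polynomial space, so by the standard inverse estimate $\|Q\|_{L^\infty(E)}\le C|E|^{-1/r_E}\|Q\|_{L^{r_E}(E)}$. After normalizing so that $\|Q\|_{r_{\rm loc}(\cdot)}=1$, this gives $\|Q\|_{L^\infty(E)}\le C h_E^{-d/r^-}$, and then log-H\"older continuity yields $|Q(x)|^{r(x)-r_E}\le (C h_E^{-d/r^-})^{C_{\rm log}(r)/(-\log h_E)}\le C'$ for $x\in E$, whence $\int_E|Q|^{r(x)}\,\mathrm{d}x\le C'\int_E|Q|^{r_E}\,\mathrm{d}x$. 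This is exactly the mechanism you correctly identify in Step~3 for the piecewise-constant function $g$; the point is that it is already needed in Step~2, and once you have it there the detour through Theorems~\ref{local-global} and~\ref{charnorm} is unnecessary: summing the last inequality over $E\in\G_n$ directly gives $\int_\Omega|Q|^{r(x)}\,\mathrm{d}x\le C'$ and hence $\|Q\|_{r(\cdot)}\le C$. The opposite direction $\|Q\|_{r_{\rm loc}(\cdot)}\le C\|Q\|_{r(\cdot)}$ is immediate from $r_{\rm loc}\le r$ and $|\Omega|<\infty$, without using $Q\in\Q^n$ at all.
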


Now we are ready to prove the stability of $\Pi^n_{\Q}$ in the variable-exponent context. The precise statement of the stability property is encapsulated in the following proposition.
\begin{proposition}\label{Q_mainsta}
For a sequence of exponents $\{r^n\}_{n\in\mathbb{N}}$, assume that $r^n\rightarrow r$ in $C^{0,\alpha}(\overline{\Omega})$ for some $\alpha \in (0,1)$. Then, there exists a constant $C$, independent of $n$, such that
\[\|\Pi^n_{\Q}q\|_{r^n(\cdot)}\leq C\|q\|_{r^n(\cdot)}\qquad \forall\, q\in L^{r^n(\cdot)}(\Omega).\]
\end{proposition}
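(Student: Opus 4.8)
The plan is to transfer the $L^1$-stability of $\Pi^n_{\Q}$ (inequality \eqref{Q_L1sta}) into a bound on the variable-exponent norm by working elementwise, replacing $r^n$ by its piecewise-constant approximation $r^n_{\rm loc}$ on the fine mesh, applying the classical local-to-global formula (Theorem \ref{local-global}), and invoking the key estimate (Theorem \ref{keyestimate}) on each element to control the averages. First I would fix $n$ and normalize: by scaling it suffices to bound $\|\Pi^n_{\Q}q\|_{r^n(\cdot)}$ for $q$ with $\|q\|_{r^n(\cdot)}\le 1$, i.e. $\int_\Omega |q|^{r^n(x)}\,\mathrm dx\le 1$. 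Since $\Pi^n_{\Q}q\in\Q^n$, Lemma \ref{normeq} lets me replace $\|\Pi^n_{\Q}q\|_{r^n(\cdot)}$ by the equivalent quantity $\|\Pi^n_{\Q}q\|_{r^n_{\rm loc}(\cdot)}$, where $r^n_{\rm loc}$ is constant on each element $E$; the constant in this equivalence is controlled by $C_{\rm log}(r^n)$, which is uniformly bounded because $r^n\to r$ in $C^{0,\alpha}(\overline\Omega)$ forces the H\"older (hence log-H\"older) seminorms of $r^n$ to be uniformly bounded in $n$. This uniformity of $C_{\rm log}(r^n)$ and of $(r^n)^+$ is what makes all the implicit constants below $n$-independent.

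Next I would estimate $\int_E |\Pi^n_{\Q}q|^{r^n_E}\,\mathrm dx$ on each element $E$, with $r^n_E := r^n(x_E)$ the minimal value of $r^n$ on $E$. By equivalence of norms on the finite-dimensional space $\hat{\mathbb P}_{\Q}$ and a scaling argument, $|\Pi^n_{\Q}q|$ is bounded on $E$ by its average $\fint_E|\Pi^n_{\Q}q|\,\mathrm dy$, which by \eqref{Q_L1sta} is $\le c_2\fint_{S_E}|q|\,\mathrm dy$. Raising to the power $r^n_E$ and integrating over $E$ gives
\[
\int_E|\Pi^n_{\Q}q|^{r^n_E}\,\mathrm dx \le C\,|E|\left(\fint_{S_E}|q|\,\mathrm dy\right)^{r^n_E}.
\]
Now I apply the key estimate \eqref{key} to the ball/cube $S_E$ with exponent $r^n$ (extended log-H\"older-continuously to $\R^d$), exploiting that $r^n_E = r^n(x_E) \le r^n(y)$ for $y\in E\subset S_E$: either $\fint_{S_E}|q|\,\mathrm dy\le |S_E|^{-m}$, in which case \eqref{key} yields $(\fint_{S_E}|q|)^{r^n(x)} \le c_1\fint_{S_E}|q|^{r^n(y)}\,\mathrm dy + c_1|S_E|^m$; or $\fint_{S_E}|q|\,\mathrm dy > |S_E|^{-m}$, a case one handles separately (and this is where I expect the main technical friction). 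Choosing $m$ large, say $m = d+1$, and summing $C\,|E|\,(\,c_1\fint_{S_E}|q|^{r^n(y)}\,\mathrm dy + c_1 |S_E|^m)$ over all $E\in\G_n$, using $|S_E|\le m_0|E|$ from shape-regularity and the finite overlap of the patches $\{S_E\}$, produces
\[
\sum_{E\in\G_n}\int_E|\Pi^n_{\Q}q|^{r^n_E}\,\mathrm dx \le C\int_\Omega|q|^{r^n(x)}\,\mathrm dx + C\max_{E\in\G_n}h_E^{\,d(d+1)} \le C.
\]
This says $\|\Pi^n_{\Q}q\|_{r^n_{\rm loc}(\cdot)}\le C$ (modular control on a bounded set gives norm control since $(r^n)^+<\infty$ uniformly), and then Lemma \ref{normeq} converts this back to $\|\Pi^n_{\Q}q\|_{r^n(\cdot)}\le C$, which is the claim after undoing the normalization.

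The main obstacle is the excluded regime of the key estimate, namely elements $E$ where $\fint_{S_E}|q|\,\mathrm dy > |S_E|^{-m}$: there \eqref{key} does not apply directly, and one must argue that on such "large" patches the bound $(\fint_{S_E}|q|)^{r^n_E}\le C\fint_{S_E}|q|^{r^n(y)}\,\mathrm dy$ holds for a different reason. The standard fix is to split $q = q\chi_{\{|q|\le 1\}} + q\chi_{\{|q|>1\}}$; on the set where $|q|>1$ one has $|q|^{r^n_E}\le |q|^{r^n(y)}$ pointwise, so that part of the average is controlled by the modular directly (via Jensen with the convex function $t\mapsto t^{r^n_E}$ and then $|q|^{r^n_E}\le|q|^{r^n(y)}$), while on the set where $|q|\le 1$ the average $\fint_{S_E}|q\chi_{\{|q|\le1\}}|\le 1 \le$ (any negative power of a small quantity is harmless since $|S_E|\le 2^d$, so here we are in the admissible regime of the key estimate after all). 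Carrying out this dichotomy carefully, with all constants tracked through shape-regularity and the uniform log-H\"older bound on $\{r^n\}$, completes the argument; everything else is the routine elementwise-to-global summation already rehearsed in the proof of Proposition \ref{sta_variable}.
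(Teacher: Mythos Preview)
Your approach is correct in outline and genuinely different from the paper's. You work at the \emph{modular} level: normalize, pass to the piecewise-constant exponent $r^n_{\rm loc}$ via Lemma~\ref{normeq}, then on each element bound $\int_E|\Pi^n_{\Q}q|^{r^n_E}$ by $|E|\bigl(\fint_{S_E}|q|\bigr)^{r^n_E}$ and invoke the key estimate (Theorem~\ref{keyestimate}), exactly mirroring the proof of Proposition~\ref{sta_variable}. The paper instead works entirely at the \emph{norm} level: it applies the local-to-global formula (Theorem~\ref{local-global}) to $\|\Pi^n_{\Q}q\|_{r^n(\cdot)}$, passes to the local constant exponent $r^n_E$ via Lemma~\ref{normeq}, uses the classical constant-exponent stability \eqref{Q_sta1} on each element, and then compares $\|\chi_{S_E}\|_{r^n(\cdot)}$ with $\|\chi_E\|_{r^n(\cdot)}$ through Theorem~\ref{charnorm} before applying Theorem~\ref{local-global} once more. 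The paper's route avoids the ``excluded regime'' of the key estimate altogether, at the cost of invoking the heavier norm-level machinery of Theorems~\ref{local-global} and~\ref{charnorm}; your route is more hands-on and reuses the template already set up for $\Pi^n_{\rm div}$, at the cost of having to justify that template once more.

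One small correction to your handling of the excluded regime: the claim ``on the set where $|q|>1$ one has $|q|^{r^n_E}\le |q|^{r^n(y)}$ pointwise'' is not quite right, because $r^n_E=\min_E r^n$ need not be $\le r^n(y)$ for $y\in S_E\setminus E$. The clean fix is different from what you sketch: after normalizing so that $\int_\Omega|q|^{r^n}\le 1$, the part $q_2:=q\chi_{\{|q|>1\}}$ satisfies $\int_\Omega|q_2|\le\int_\Omega|q_2|^{r^-}\le\int_\Omega|q|^{r^n}\le 1$, hence $\fint_{S_E}|q_2|\le|S_E|^{-1}$ and the key estimate applies to $q_2$ with $m=1$; for $q_1:=q\chi_{\{|q|\le1\}}$ you already have $\fint_{S_E}|q_1|\le 1\le|S_E|^{-m}$ once $h_E$ is small. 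With that adjustment your argument goes through, and the uniformity in $n$ follows, as you note, from the uniform bound on $C_{\rm log}(r^n)$ implied by $r^n\to r$ in $C^{0,\alpha}(\overline\Omega)$.
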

\begin{proof}
Let $q\in L^{r^n(\cdot)}(\Omega)$. Then, by Theorem \ref{local-global} and Lemma \ref{normeq},
\begin{align*}
\|\Pi^n_{\Q}q\|_{r^n(\cdot)}
&=\norm{\sum_{E\in\G^n}\chi_E\Pi^n_{\Q}q}_{r^n(\cdot)}\\
&\leq C\norm{\sum_{E\in\G^n}\chi_E\frac{\|\chi_E\Pi^n_{\Q}q\|_{r^n(\cdot)}}{\|\chi_E\|_{r^n(\cdot)}}}_{r^n(\cdot)}
\leq C\norm{\sum_{E\in\G^n}\chi_E\frac{\|\chi_E\Pi^n_{\Q}q\|_{r^n_{\rm{loc}}(\cdot)}}{\|\chi_E\|_{r^n(\cdot)}}}_{r^n(\cdot)}.
\end{align*}
By the definition of the variable-exponent norm, one has that $\|\chi_E\Pi^n_{\Q}q\|_{r^n_{\rm{loc}}(\cdot)}\leq\|\chi_E\Pi^n_{\Q}q\|_{r^n_E}$ for each $E \in \G^n$.
Therefore, by \eqref{Q_sta1},
\[\|\Pi^n_{\Q}q\|_{r^n(\cdot)}\leq C\norm{\sum_{E\in\G^n}\chi_E\frac{\|\chi_E\Pi^n_{\Q}q\|_{r^n_E}}{\|\chi_E\|_{r^n(\cdot)}}}_{r^n(\cdot)}
\leq C\norm{\sum_{E\in\G^n}\chi_E\frac{\|\chi_{S_E}q\|_{r^n_E}}{\|\chi_E\|_{r^n(\cdot)}}}_{r^n(\cdot)}.\]
Here the constant $C$ might depend on $r^n_E$, but since $1<r^-\leq r(x)\leq r^+<\infty$, we can choose a uniform constant $C$ independent of $n$ and $E$.

At this stage, we claim that
\[\|\chi_{S_E}q\|_{r^n_E}\leq\|\chi_{S_E}q\|_{r^n_{\rm{loc}}(\cdot)}.\]
Indeed, if this were not the case, then, by the definition of the Luxembourg norm, we would have that
\[\int_{\Omega}\bigg|\frac{\chi_{S_E}q}{\|\chi_{S_E}q\|_{r^n_E}}\bigg|^{r^n_{\rm{loc}}(x)}\dx<1.\]
However, by writing $S_E=E\cup E_1\cup\cdots\cup E_j$, we have that
\[\int_{\Omega}\bigg|\frac{\chi_{S_E}q}{\|\chi_{S_E}q\|_{r^n_E}}\bigg|^{r^n_{\rm{loc}}(x)}\dx=\int_{E}\bigg|\frac{\chi_{S_E}q}{\|\chi_{S_E}q\|_{r^n_E}}\bigg|^{r^n_E}\dx+\sum_{i=1}^j\int_{E_i}\bigg|\frac{\chi_{S_E}q}{\|\chi_{S_E}q\|_{r^n_E}}\bigg|^{r^n_{\rm{loc}}(x)}\dx \geq1,\]
which is a contradiction. Hence together with Lemma \ref{normeq} again, the above claim implies that
\[\|\Pi^n_{\Q}q\|_{r^n(\cdot)}\leq C\norm{\sum_{E\in\G^n}\chi_E\frac{\|\chi_{S_E}q\|_{r^n(\cdot)}}{\|\chi_E\|_{r^n(\cdot)}}}_{r^n(\cdot)}.\]

Next we claim that
\[\|\chi_{S_E}\|_{r^n(\cdot)}\leq C\|\chi_E\|_{r^n(\cdot)}.\]
By Theorem \ref{charnorm}, for any $x \in E$,
\[\|\chi_E\|_{r^n(\cdot)}\geq C|E|^{\frac{1}{r^n(x)}}\geq C|E|^{\frac{1}{r^n_E}}\geq C|S_E|^{\frac{1}{r^n_E}}\geq C|S_E|^{\frac{1}{r^n_{S_E}}}\geq C\|\chi_{S_E}\|_{r^n(\cdot)},\]
and hence the claim is proved. Therefore, together with Theorem \ref{local-global} again, we have
\[\|\Pi^n_{\Q}q\|_{r^n(\cdot)}\leq C\norm{\sum_{E\in\G^n}\chi_{S_E}\frac{\|\chi_{S_E}q\|_{r^n(\cdot)}}{\|\chi_{S_E}\|_{r^n(\cdot)}}}_{r^n(\cdot)}
\leq C\norm{\sum_{E\in\G_n}\chi_{S_E}q}_{r^n(\cdot)}\leq C\|q\|_{r^n(\cdot)}\]
by the finite overlap property of the patches. Note that the constant $C$ above depends on $C_{\rm{log}}(r^n)$, and therefore also on $n$. However, since $r^n\rightarrow r$ in $C^{0,\alpha}(\overline{\Omega})$, this constant can be bounded uniformly by a new constant, which is independent of $n$. Thus the proof is complete.
\end{proof}

\end{subsection}

\begin{subsection}{Discrete inf-sup condition}

 The aim of this subsection is to state and prove a discrete inf-sup condition, which plays an important role in our proof of the existence of the discrete pressure and the analysis of its approximation properties. The key technical tools required in the proof of the discrete inf-sup condition are the existence of a Bogovski\u{\i} operator, stated in Theorem \ref{contibog}, and the stability property of $\Pi^n_{\rm{div}}$ shown in the previous subsection.

\begin{proposition}\label{dis_infsup}
Assume that $1<r^- \leq r^+ < \infty$ and $r^n\rightarrow r$ in $C^{0,\alpha}(\overline{\Omega})$ for some $\alpha \in (0,1)$. Then, there exists a constant $\beta>0$, independent of $n$, such that
\[\sup_{0\neq \VV\in\V^n,\,\,\|\VV\|_{1,r^n(\cdot)}\leq1} \langle{\rm{div}}\,\VV,Q\rangle\geq\frac{1}{\beta}\|Q\|_{(r^n)'(\cdot)}\qquad\forall\, Q\in \Q^n_0,\,\,\,n\in\mathbb{N}.\]
\end{proposition}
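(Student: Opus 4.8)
The plan is to reduce the discrete inf-sup condition to its continuous counterpart (the Proposition following Theorem~\ref{contibog}) using the stable projection operator $\Pi^n_{\rm{div}}$, exactly as in the classical Fortin-operator argument, but being careful that all stability constants are uniform in $n$ thanks to the hypothesis $r^n \to r$ in $C^{0,\alpha}(\overline{\Omega})$. First I would fix $Q \in \Q^n_0 \subset L^{(r^n)'(\cdot)}_0(\Omega)$ and invoke the continuous inf-sup condition to produce $\vv \in W^{1,r^n(\cdot)}_0(\Omega)^d$ with $\|\vv\|_{1,r^n(\cdot)} \le 1$ and $\langle {\rm{div}}\,\vv, Q \rangle \ge \alpha_{r^n}\|Q\|_{(r^n)'(\cdot)}$. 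Since $r^n \to r$ in $C^{0,\alpha}$, the log-H\"older constants $C_{\rm{log}}(r^n)$ are bounded uniformly in $n$, and hence the constant $\alpha_{r^n}$ coming from Theorem~\ref{contibog} can be bounded below by a single $\alpha_0 > 0$ independent of $n$.

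Next I would push $\vv$ into the discrete space by setting $\VV := \Pi^n_{\rm{div}}\vv \in \V^n$. By the divergence-preserving property in Assumption~2, $\langle {\rm{div}}\,\VV, Q \rangle = \langle {\rm{div}}\,\vv, Q \rangle \ge \alpha_0 \|Q\|_{(r^n)'(\cdot)}$ for all $Q \in \Q^n \supset \Q^n_0$. To control $\|\VV\|_{1,r^n(\cdot)}$ I would combine Poincar\'e's and Korn's inequalities in the variable-exponent space with the variable-exponent stability of $\Pi^n_{\rm{div}}$ from Proposition~\ref{sta_variable}: the modular estimate there gives $\int_\Omega |\nabla \Pi^n_{\rm{div}}\vv|^{r^n(x)}\dx \le C\int_\Omega |\nabla \vv|^{r^n(x)}\dx + C\max_{E \in \G_n} h_E^{d+1}$, and I would translate this modular bound into a Luxembourg-norm bound using the standard unit-ball characterization of the norm, yielding $\|\nabla \VV\|_{r^n(\cdot)} \le C(1 + o(1))$ as $n \to \infty$ since $\max_E h_E \to 0$. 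Rescaling $\VV$ by this bounded factor produces an admissible test function $\widetilde{\VV} = \VV/\|\VV\|_{1,r^n(\cdot)}$ with unit norm, and the supremum on the left-hand side is at least $\langle {\rm{div}}\,\widetilde{\VV}, Q\rangle \ge (\alpha_0/C)\|Q\|_{(r^n)'(\cdot)}$, giving the claim with $\beta = C/\alpha_0$.

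The main technical obstacle is the conversion between the modular inequality of Proposition~\ref{sta_variable} and a genuine norm bound with a constant uniform in $n$, because the additive error term $C\max_{E}h_E^{d+1}$ prevents a clean homogeneous modular-to-norm passage; I would handle this by noting that for $\|\vv\|_{1,r^n(\cdot)} \le 1$ the modular $\int_\Omega|\nabla\vv|^{r^n}\dx$ is itself bounded (by $1$, say, using $\|\nabla\vv\|_{r^n(\cdot)}\le C$ via Korn and $r^+ < \infty$), so the right-hand side of Proposition~\ref{sta_variable} is bounded by a constant independent of $n$ for all $n$ large, and then the norm-modular equivalence $\|g\|_{r^n(\cdot)} \le \max(\rho(g), \rho(g)^{1/r^-})$ (valid uniformly since $1 < r^- \le r^n \le r^+ < \infty$) converts this to a uniform norm bound. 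A secondary point requiring care is that $\Pi^n_{\rm{div}}$ is a priori only defined on $W^{1,1}_0(\Omega)^d$, but since $W^{1,r^n(\cdot)}_0(\Omega)^d \hookrightarrow W^{1,1}_0(\Omega)^d$ on the bounded domain $\Omega$ (as $r^- > 1$), the operator is well-defined on our test function, and all the cited stability estimates apply.
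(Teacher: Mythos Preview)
Your proposal is correct and follows essentially the same Fortin-type argument as the paper: the paper works directly with the Bogovski\u{\i} operator $\mathcal{B}$ applied to a scalar $v\in L^{r^n(\cdot)}_0$ and then projects via $\Pi^n_{\rm div}$, whereas you package the first step into the continuous inf-sup condition before projecting, but the underlying construction and the use of Proposition~\ref{sta_variable} for uniform stability are identical. One small remark: Korn's inequality plays no role here, since Proposition~\ref{sta_variable} already bounds $\nabla\Pi^n_{\rm div}\vv$ directly in terms of $\nabla\vv$; otherwise your handling of the modular-to-norm conversion is in fact more explicit than the paper's, which simply asserts the implication $\|v\|_{r^n(\cdot)}\le 1 \Rightarrow \|\nabla\Pi^n_{\rm div}\mathcal{B}v\|_{r^n(\cdot)}\le C_1$.
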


\begin{proof}
The assertion follows from the isomorphism between $(L^{r^n}_0(\Omega))^*$ and $L^{(r^n)'}_0$ (with the norm-equivalence constants bounded from above by $2$ and from below by $1/2$). In fact, it follows from Theorem \ref{contibog} that we have
\begin{align*}
\|Q\|_{(r^n)'(\cdot)}&\leq2\sup_{v\in L^{r^n(\cdot)}_0,\,\,\|v\|_{r^n(\cdot)}\leq1}\int_{\Omega} Q\,v\dx\\
&=2\sup_{v\in L^{r^n(\cdot)}_0,\,\,\|v\|_{r^n(\cdot)}\leq1}\int_{\Omega}Q\,{\rm{div}}\,(\mathcal{B}v)\dx\\
&=2\sup_{v\in L^{r^n(\cdot)}_0,\,\,\|v\|_{r^n(\cdot)}\leq1}\int_{\Omega}Q\,{\rm{div}}\,(\Pi^n_{\rm{div}}\mathcal{B}v)\dx.
\end{align*}
Now, by Theorem \ref{contibog} and Proposition \ref{sta_variable},
\[\|v\|_{r^n(\cdot)}\leq1\,\,\,\,\,{\rm{implies}}\,\,\,\,\,\|\nabla\Pi^n_{\rm{div}}\mathcal{B}v\|_{r^n(\cdot)}\leq C_1.\]
The constant $C_1$ depends on $C_{\rm{log}}(r^n)$, and therefore also on $n$. However, since $r^n\rightarrow r$ in $C^{0,\alpha}(\overline{\Omega})$, the constant $C_1$ can be bounded uniformly by a new constant, still denoted by $C_1$, which is independent of $n$. Therefore,
\begin{align*}
\|Q\|_{(r^n)'(\cdot)}
&\leq2\sup_{\|\Pi^n_{\rm{div}}\mathcal{B}v\|_{1,r^n(\cdot)}\leq C_1}\int_{\Omega}Q\,{\rm{div}}\,(\Pi^n_{\rm{div}}\mathcal{B}v)\,\dx\\
& = 2C_1\sup_{\|\Pi^n_{\rm{div}}\mathcal{B}\frac{v}{C_1}\|_{1,r^n(\cdot)}\leq 1}\int_{\Omega}Q\,{\rm{div}}\,(\Pi^n_{\rm{div}}\mathcal{B}\frac{v}{C_1})\,\dx\\
&\leq \beta\sup_{\VV\in\V^n,\,\,\|\VV\|_{1,r^n(\cdot)}\leq1}\langle{\rm{div}}\,\VV,Q\rangle.
\end{align*}
That completes the proof of the proposition.
\end{proof}

\end{subsection}

\begin{subsection}{Discrete Bogovski\u{\i} operator}
In this subsection, we construct a discrete counterpart of the Bogovski\u{\i} operator in the variable-exponent setting and explore its properties.

Suppose that $1<r^-\leq r^+<\infty$ and $r^n\rightarrow r$ in $C^{0,\alpha}(\overline{\Omega})$ for some $\alpha \in (0,1)$.
For $H\in{\rm{div}}\,\V^n$, define the linear functional $\mathcal{L}^n:L^{(r^n)'(\cdot)}(\Omega)\rightarrow\R$ by
\[\mathcal{L}^n(q)=\int_{\Omega}H\,\Pi^n_{\Q}q\dx,\qquad q\in L^{(r^n)'(\cdot)}(\Omega).\]
Then, thanks to Proposition \ref{Q_mainsta}, $\mathcal{L}^n$ is a bounded linear functional on $L^{(r^n)'(\cdot)}(\Omega)$. Hence, by Theorem \ref{Riesz}, there exists a unique $\mathcal{K}(H)\in L^{r^n(\cdot)}(\Omega)$ such that
\[\mathcal{L}^n(q)=\int_{\Omega}H\,\Pi^n_{\Q}q\dx=\int_{\Omega}\mathcal{K}(H)\,q\dx.\]
Note that since $H\in L^{r^n(\cdot)}_0(\Omega)$ and $\Pi^n_{\Q}c=c$ for all constants $c$, we have $\mathcal{K}(H)\in L^{r^n(\cdot)}_0(\Omega)$.

Now we define the discrete Bogovski\u{\i} operator. For $n\in\mathbb{N}$, we consider the linear operator $\mathcal{B}^n:{\rm{div}}\,\V^n\rightarrow\V^n$ by
\begin{equation}\label{define_dis_Bog}
\mathcal{B}^nH\defeq\Pi^n_{\rm{div}}\mathcal{B}\mathcal{K}(H)\in\V^n\qquad{\rm{for}}\,\,\,H\in{\rm{div}}\,\V^n,
\end{equation}
where $\mathcal{B}$ is defined in Theorem \ref{contibog}.

For later use, we require the following bound on $\mathcal{K}(H)$ in a variable-exponent norm:
\begin{align}
\|\mathcal{K}(H)\|_{r^n(\cdot)}
&\leq 2 \sup_{q\in L^{(r^n)'(\cdot)}(\Omega),\,\,\|q\|_{(r^n)'(\cdot)}\leq1}\int_{\Omega}\mathcal{K}(H)\,q\dx\nonumber\\
&=2 \sup_{q\in L^{(r^n)'(\cdot)}(\Omega),\,\,\|q\|_{(r^n)'(\cdot)}\leq1}\int_{\Omega}H\,\Pi^n_{\Q}q\dx\nonumber\\
&\leq C\sup_{Q\in\Q^n,\,\,\|Q\|_{(r^n)'(\cdot)}\leq1}\int_{\Omega}H\,Q\dx.
\label{Bog_est}
\end{align}

Next, we will show a relevant convergence property of the discrete Bogovski\u{\i} operator. {\color{black}{To this end, we need the following lemma, which is quoted from \cite{DKS2013}.}}

\begin{lemma}\label{pro_conv}
Let $\{\vv_n\}_{n\in\mathbb{N}}\subset W^{1,s}_0(\Omega)^d$, $s\in(1,\infty)$, such that $\vv_n\rightharpoonup \vv$ weakly in $W^{1,s}_0(\Omega)^d$ as $n\rightarrow\infty$. Then
\[\Pi^n_{\rm{div}}\vv_n\rightharpoonup \vv\qquad{\rm{weakly}}\,\,\,{\rm{in}}\,\,\,W^{1,s}_0(\Omega)^d\,\,\,{\rm{as}}\,\,\,n\rightarrow\infty.\]
\end{lemma}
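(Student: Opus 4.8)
The plan is to split
\[
\Pi^n_{\rm{div}}\vv_n-\vv=\Pi^n_{\rm{div}}(\vv_n-\vv)+\bigl(\Pi^n_{\rm{div}}\vv-\vv\bigr)
\]
and to show that each term on the right converges weakly to $\boldsymbol{0}$ in $W^{1,s}_0(\Omega)^d$. The second term in fact converges \emph{strongly} to $\boldsymbol{0}$ by \eqref{v_conv}, since $\vv\in W^{1,s}_0(\Omega)^d$ is fixed; so the entire difficulty is concentrated in the first term $\boldsymbol{a}_n\defeq\Pi^n_{\rm{div}}(\vv_n-\vv)$.

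For $\boldsymbol{a}_n$ I would establish two properties. First, $\{\boldsymbol{a}_n\}$ is bounded in $W^{1,s}_0(\Omega)^d$: weak convergence of $\{\vv_n\}$ gives, by the uniform boundedness principle, $M\defeq\sup_n\|\vv_n\|_{1,s}<\infty$, and then the global $W^{1,s}$-stability \eqref{sta_v} yields $\|\boldsymbol{a}_n\|_{1,s}\leq c_s\|\vv_n-\vv\|_{1,s}\leq c_s(M+\|\vv\|_{1,s})$. Second, and this is the crucial point, $\boldsymbol{a}_n\to\boldsymbol{0}$ \emph{strongly in $L^s(\Omega)^d$}. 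To see this I would sum the elementwise bound \eqref{sta_vs} over $E\in\G_n$ and use the finite overlap of the patches $\{S_E\}$ to obtain the mesh-dependent $L^s$-stability estimate
\[
\|\Pi^n_{\rm{div}}\boldsymbol{w}\|_{s}\leq C\Bigl(\|\boldsymbol{w}\|_{s}+\bigl(\textstyle\max_{E\in\G_n}h_E\bigr)\|\nabla\boldsymbol{w}\|_{s}\Bigr)\qquad\forall\,\boldsymbol{w}\in W^{1,s}_0(\Omega)^d .
\]
Applying this with $\boldsymbol{w}=\vv_n-\vv$, the first term on the right is bounded by $\|\vv_n-\vv\|_s$, which tends to $0$ because the embedding $W^{1,s}_0(\Omega)\hookrightarrow L^s(\Omega)$ is compact (Rellich--Kondrachov) and $\vv_n\rightharpoonup\vv$; the second term is bounded by $(\max_{E\in\G_n}h_E)(M+\|\vv\|_{1,s})$, which tends to $0$ because the maximal mesh size vanishes as $n\to\infty$ (the condition recorded after Assumption~1). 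Hence $\boldsymbol{a}_n\to\boldsymbol{0}$ in $L^s(\Omega)^d$.

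Finally I would conclude with a routine reflexivity argument: $W^{1,s}_0(\Omega)^d$ is reflexive, so by boundedness every subsequence of $\{\boldsymbol{a}_n\}$ has a further subsequence converging weakly in $W^{1,s}_0(\Omega)^d$; since the embedding into $L^s(\Omega)^d$ is weak-to-weak continuous, any such weak limit must coincide with the strong $L^s$-limit $\boldsymbol{0}$; as every subsequence therefore has a sub-subsequence converging weakly to $\boldsymbol{0}$, the full sequence satisfies $\boldsymbol{a}_n\rightharpoonup\boldsymbol{0}$ in $W^{1,s}_0(\Omega)^d$. Adding back the strongly (hence weakly) convergent term $\Pi^n_{\rm{div}}\vv-\vv\to\boldsymbol{0}$ gives $\Pi^n_{\rm{div}}\vv_n\rightharpoonup\vv$, which is the assertion.

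The main obstacle is the strong $L^s$-smallness of $\Pi^n_{\rm{div}}(\vv_n-\vv)$: one only has weak, not strong, convergence of $\vv_n$, so the bound on $\|\nabla(\vv_n-\vv)\|_s$ is merely uniform. What saves the day is that the projection $\Pi^n_{\rm{div}}$ introduces the extra factor $\max_{E\in\G_n}h_E$ in front of that gradient term, so the absence of strong gradient convergence is absorbed by the vanishing mesh size, while the lower-order ($L^s$) part is handled by compactness of the Sobolev embedding. Everything else is soft functional analysis or a direct reuse of estimates already established in the paper.
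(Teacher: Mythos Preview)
The paper does not prove this lemma; it merely quotes it from \cite{DKS2013}. So there is no in-paper proof to compare against.

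Your argument is correct. The decomposition $\Pi^n_{\rm{div}}\vv_n-\vv=\Pi^n_{\rm{div}}(\vv_n-\vv)+(\Pi^n_{\rm{div}}\vv-\vv)$ is the natural one, and the second term is handled directly by \eqref{v_conv}. For the first term, the key observation is the mesh-weighted $L^s$-stability
\[
\|\Pi^n_{\rm{div}}\boldsymbol{w}\|_s\leq C\bigl(\|\boldsymbol{w}\|_s+(\textstyle\max_{E\in\G_n}h_E)\|\nabla\boldsymbol{w}\|_s\bigr),
\]
which you correctly derive by dropping the gradient term on the left of \eqref{sta_vs}, summing over $E\in\G_n$, and bounding $h_E^s\leq(\max_E h_E)^s$ before invoking finite overlap. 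Applied to $\boldsymbol{w}=\vv_n-\vv$, the lower-order part vanishes by Rellich--Kondrachov and the gradient part is killed by the vanishing mesh size against a merely bounded sequence. The closing step (boundedness in $W^{1,s}_0$ plus strong $L^s$-convergence forces weak $W^{1,s}_0$-convergence via the subsequence argument) is standard and correctly stated.

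This is essentially the argument one finds in \cite{DKS2013}; the only cosmetic difference is that one can alternatively phrase the identification-of-the-limit step by testing $\int_\Omega\nabla\boldsymbol{a}_n\cdot\boldsymbol{\phi}\dx$ against smooth $\boldsymbol{\phi}$ and integrating by parts, but your route via reflexivity and uniqueness of limits is equally clean.
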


Now we are ready to prove the desired convergence property of the discrete Bogovski\u{\i} operator.

\begin{proposition}\label{Bog_conv}
Suppose that $\VV^n\in\V^n$, $n\in\mathbb{N}$, and $\VV^n\rightarrow \VV$ weakly in $W^{1,s}_0(\Omega)^d$ as $n\rightarrow\infty$. Then, we have that
\[\mathcal{B}^n{\rm{div}}\,\VV^n\rightharpoonup\mathcal{B}\,{\rm{div}}\,\VV\qquad{\rm{weakly}}\,\,\,{\rm{in}}\,\,\,W^{1,s}_0(\Omega)^d\,\,\,{\rm{as}}\,\,\,n\rightarrow\infty.\]
\end{proposition}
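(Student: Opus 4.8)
The plan is to unwind the definition $\mathcal{B}^n\,{\rm{div}}\,\VV^n=\Pi^n_{\rm{div}}\mathcal{B}\mathcal{K}({\rm{div}}\,\VV^n)$ and pass to the limit in the three composed operations: the map $\mathcal{K}$, the continuous Bogovski\u{\i} operator $\mathcal{B}$, and the projection $\Pi^n_{\rm{div}}$. The last of these is already handled by Lemma \ref{pro_conv}, so once I know that $\mathcal{B}\mathcal{K}({\rm{div}}\,\VV^n)\rightharpoonup\mathcal{B}\,{\rm{div}}\,\VV$ weakly in $W^{1,s}_0(\Omega)^d$, I can apply $\Pi^n_{\rm{div}}$ and conclude. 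Since $\mathcal{B}:L^s_0(\Omega)\to W^{1,s}_0(\Omega)^d$ is a fixed bounded linear (hence weakly continuous) operator, it suffices to show $\mathcal{K}({\rm{div}}\,\VV^n)\rightharpoonup{\rm{div}}\,\VV$ weakly in $L^s_0(\Omega)$.

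First I would establish a uniform bound. By the estimate \eqref{Bog_est} with $H={\rm{div}}\,\VV^n$ together with the discrete inf-sup-type duality and the fact that $\|{\rm{div}}\,\VV^n\|_{s}\leq C\|\VV^n\|_{1,s}$ is bounded (weakly convergent sequences are bounded), one gets $\|\mathcal{K}({\rm{div}}\,\VV^n)\|_{s}\leq C$ uniformly in $n$ --- here I would work with the fixed exponent $s$ rather than the variable exponent $r^n$, using that $\Pi^n_{\Q}$ is $L^{s'}$-stable via \eqref{q_sta} so that $\mathcal{K}$ inherits $L^s$-stability; alternatively one can invoke Proposition \ref{Q_mainsta} in the variable-exponent setting if $s$ is taken compatible with $r^n$, but the constant-exponent route is cleaner. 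Consequently, along a subsequence, $\mathcal{K}({\rm{div}}\,\VV^n)\rightharpoonup \chi$ weakly in $L^s_0(\Omega)$ for some limit $\chi$, and it remains to identify $\chi={\rm{div}}\,\VV$.

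To identify the limit I would test against a fixed smooth $q\in C^\infty_c(\Omega)$ (or $q\in L^{s'}(\Omega)$) and use the defining identity $\int_\Omega \mathcal{K}({\rm{div}}\,\VV^n)\,q\dx=\int_\Omega {\rm{div}}\,\VV^n\,\Pi^n_{\Q}q\dx$. On the right-hand side, $\Pi^n_{\Q}q\to q$ strongly in $L^{s'}(\Omega)$ by \eqref{q_conv}, while ${\rm{div}}\,\VV^n\rightharpoonup{\rm{div}}\,\VV$ weakly in $L^s(\Omega)$ (a consequence of $\VV^n\rightharpoonup\VV$ in $W^{1,s}_0$); the product of a weakly convergent sequence with a strongly convergent one converges, so the right-hand side tends to $\int_\Omega {\rm{div}}\,\VV\,q\dx$. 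The left-hand side tends to $\int_\Omega \chi\,q\dx$ by weak convergence of $\mathcal{K}({\rm{div}}\,\VV^n)$. Hence $\chi={\rm{div}}\,\VV$, and since the limit is independent of the subsequence, the whole sequence converges: $\mathcal{K}({\rm{div}}\,\VV^n)\rightharpoonup{\rm{div}}\,\VV$ weakly in $L^s_0(\Omega)$.

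Finishing up, weak continuity of $\mathcal{B}$ gives $\mathcal{B}\mathcal{K}({\rm{div}}\,\VV^n)\rightharpoonup\mathcal{B}\,{\rm{div}}\,\VV$ weakly in $W^{1,s}_0(\Omega)^d$, and then Lemma \ref{pro_conv} applied to the sequence $\vv_n:=\mathcal{B}\mathcal{K}({\rm{div}}\,\VV^n)$ yields $\Pi^n_{\rm{div}}\vv_n=\mathcal{B}^n\,{\rm{div}}\,\VV^n\rightharpoonup\mathcal{B}\,{\rm{div}}\,\VV$ weakly in $W^{1,s}_0(\Omega)^d$, which is the claim. The main obstacle I anticipate is making the uniform bound on $\|\mathcal{K}({\rm{div}}\,\VV^n)\|$ clean despite the moving exponent $r^n$ in the original variable-exponent construction of $\mathcal{K}$; the resolution is to observe that $\mathcal{K}$, being defined through the $L^{s'}$-stable projection $\Pi^n_{\Q}$ and duality, is automatically $L^s$-stable with an $n$-independent constant for any fixed $s\in(1,\infty)$, so one can run the entire argument in classical Lebesgue spaces with the fixed exponent $s$ coming from the hypothesis. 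A minor point to check is that $\mathcal{K}(H)$ indeed has zero mean (stated in the excerpt via $\Pi^n_{\Q}c=c$ for constants), which guarantees $\mathcal{B}$ is applicable.
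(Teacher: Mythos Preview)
Your proposal is correct and follows essentially the same route as the paper: identify the weak limit of $\mathcal{K}({\rm div}\,\VV^n)$ via the defining identity $\int_\Omega \mathcal{K}({\rm div}\,\VV^n)\,q\dx=\int_\Omega {\rm div}\,\VV^n\,\Pi^n_{\Q}q\dx$ together with $\Pi^n_{\Q}q\to q$ strongly in $L^{s'}$, then apply weak continuity of $\mathcal{B}$ and finish with Lemma~\ref{pro_conv}. The paper, like you, works in the fixed-exponent space $L^s_0(\Omega)$ and remarks that $\mathcal{K}$ is to be understood there via classical Riesz; the only cosmetic difference is that the paper verifies weak convergence of $\mathcal{K}({\rm div}\,\VV^n)$ directly from the definition (the displayed identity already gives $\int_\Omega \mathcal{K}({\rm div}\,\VV^n)\,q\dx\to\int_\Omega {\rm div}\,\VV\,q\dx$ for every $q\in L^{s'}$), so your separate uniform-bound-plus-subsequence step is not needed.
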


\begin{proof}
Let us define $A^n\defeq{\rm{div}}\,\VV^n$; then, $A^n\rightharpoonup A\defeq{\rm{div}}\,\VV$ weakly in $L^s_0(\Omega)$ as $n\rightarrow\infty$. Therefore, thanks to \eqref{q_conv}, we have, for all $q\in L^{s'}(\Omega)$ by the classical Riesz representation theorem (here we shall use the same notation $\mathcal{K}$ as above, but in this case the constructed $\mathcal{K}(A^n)$
lies in a fixed-exponent space $L^s_0(\Omega)$), and since $\Pi^n_{\Q}q  \rightarrow q$ strongly in $L^{s'}(\Omega)$ by
\eqref{q_conv},  that
\[\int_{\Omega}\mathcal{K}(A^n)q\dx=\int_{\Omega}A^n\Pi^n_{\Q}q\dx\rightarrow\int_{\Omega}A\,q\dx\qquad{\rm{as}}\,\,\,n\rightarrow\infty.\]
In other words, we have that $\mathcal{K}(A^n)\rightharpoonup A$ weakly in $L^s_0(\Omega)$ as $n\rightarrow\infty$.
The Bogovski\u{\i} operator defined in Theorem \ref{contibog} is linear and continuous, and hence it is also continuous
with respect to the weak topologies of the respective spaces. Therefore, we have
$\mathcal{B}\mathcal{K}(A^n)\rightharpoonup\mathcal{B}A$ weakly in $W^{1,s}_0(\Omega)^d$ as $n\rightarrow\infty$.
Hence, by Lemma \ref{pro_conv}, $\mathcal{B}^n A^n:=\Pi^n_{\rm{div}}\mathcal{B}\mathcal{K}(A^n)\rightharpoonup\mathcal{B}A$ weakly in $W^{1,s}_0(\Omega)^d$ as $n\rightarrow\infty$.
As $A^n\defeq{\rm{div}}\,\VV^n$ and $A\defeq{\rm{div}}\,\VV$ the proof is complete. \end{proof}

\end{subsection}

\begin{subsection}{The finite element approximation}
{\color{black}{We are now ready to construct the finite element approximation of the problem}} \eqref{eq1}--\eqref{q2} and prove that the approximate problem has a solution.

An essential property of the problem \eqref{eq1}--\eqref{q2} is that, thanks to the fact that the velocity field $\uu$ is
divergence-free, the convective terms appearing in the equations are skew-symmetric. It is important to ensure
that this skew-symmetry is preserved under discretization, even though the finite element approximations to the velocity field
are now only discretely (rather than pointwise) divergence-free. We therefore
define the following trilinear forms:
\begin{align*}
B_u[\vv,\ww,\hh]&\defeq\frac{1}{2}\int_{\Omega}((\vv\otimes \hh)\cdot\nabla \ww-(\vv\otimes \ww)\cdot\nabla \hh)\dx,\\
B_c[b,\vv,z]&\defeq\frac{1}{2}\int_{\Omega}(z\vv\cdot\nabla b-b\vv\cdot\nabla z)\dx,
\end{align*}
for all $\vv,\ww,\hh\in W^{1,\infty}_0(\Omega)^d$, $b,z\in W^{1,\infty}(\Omega)$. These trilinear forms then
coincide with the trilinear forms associated with the corresponding convection terms if we are considering
pointwise divergence-free functions and also, thanks to their skew symmetry, they now also vanish
when $\ww=\hh$ and $b=z$, respectively. Explicitly, we have
\begin{align}
\begin{aligned}
B_u[\vv,\vv,\vv]&=0\,\,\,\,\,{\rm{and}}\,\,\,\,\,B_c[z,\vv,z]=0&&\forall\, \vv\in W^{1,\infty}_0(\Omega)^d,\,\,\,z\in W^{1,\infty}(\Omega),\label{Bfree}\\
B_u[\vv,\ww,\hh]&=-\int_{\Omega}(\vv\otimes \ww)\cdot\nabla \hh\dx&&\forall\, \vv,\ww,\hh\in W^{1,\infty}_{0,{\rm{div}}}(\Omega)^d,\\
B_c[b,\vv,z]&=-\int_{\Omega}b\vv\cdot\nabla z\dx&&\forall\, \vv\in W^{1,\infty}_{0,{\rm{div}}}(\Omega)^d,\,\,\,b,z\in W^{1,\infty}(\Omega).
\end{aligned}
\end{align}
Furthermore, the trilinear form $B_u[\cdot,\cdot,\cdot]$ is also bounded in a sense to be discussed below
in more detail. Observe that for $\frac{3d}{d+2}<r^-\leq r^+<d$, we have the Sobolev embedding
\[W^{1,r(\cdot)}(\Omega)^d\hookrightarrow L^{2r'(\cdot)}(\Omega)^d. \]
Then, H\"older's inequality yields that
\begin{align*}
\int_{\Omega}(\vv\otimes \ww)\cdot\nabla \hh\dx&\leq\|\vv\|_{2r'(\cdot)}\|\ww\|_{2r'(\cdot)}\|\hh\|_{1,r(\cdot)}\\
&\leq\|\vv\|_{1,r(\cdot)}\|\ww\|_{1,r(\cdot)}\|\hh\|_{1,r(\cdot)}.
\end{align*}
In the same way, we have
\[\int_{\Omega}(\vv\otimes \hh)\cdot\nabla \ww\dx\leq \|\vv\|_{1,r(\cdot)}\|\hh\|_{1,r(\cdot)}\|\ww\|_{1,r(\cdot)}.\]
Thus we obtain the bound
\begin{equation}\label{B_uest}
|B_u[\vv,\ww,\hh]|\leq\|\vv\|_{1,r(\cdot)}\|\ww\|_{1,r(\cdot)}\|\hh\|_{1,r(\cdot)}.
\end{equation}

Now, for $n\in\mathbb{N}$, we call a triple of functions $(\UU^n,P^n,C^n)\in \V^n\times\Q^n_0\times(\Z^n+c_d)$ a finite element approximation to a solution of the problem {\bf{(Q)}} if it satisfies
\begin{align}
\int_{\Omega}\SSS(C^n,\DD\UU^n)\cdot \DD \VV\dx+B_u[\UU^n,\UU^n,\VV]-\langle\text{div}\,\VV,P^n\rangle&=\langle \boldsymbol{f},\VV\rangle&&\forall\, \VV\in\V^n,\label{Q_n1}\\
\int_{\Omega}Q\,\text{div}\,\UU^n\dx&=0&&\forall\, Q\in\mathbb{Q}^n,\label{Q_n2}\\
\int_{\Omega}\q_c(C^n,\nabla C^n,\DD\UU^n)\cdot\nabla Z\dx+B_c[C^n,\UU^n,Z]&=0&&\forall\, Z\in\Z^n,\label{Q_n3}
\end{align}
where $c_d\in W^{1,q}(\Omega)$ with $q>d$ and $\boldsymbol{f}\in(W^{1,r^-}_{0}(\Omega)^d)^*$.

If we restrict the test-functions to $\V^n_{\rm{div}}$, the above problem reduces to finding $(\UU^n,C^n)\in\V^n_{\rm{div}}\times(\Z^n+c_d)$ such that
\begin{align}
\int_{\Omega}\SSS(C^n,\DD\UU^n)\cdot \DD \VV\dx+B_u[\UU^n,\UU^n,\VV]&=\langle \boldsymbol{f},\VV\rangle&&\forall\, \VV\in\V^n_{\rm{div}},\label{P_n1}\\
\int_{\Omega}\q_c(C^n,\nabla C^n,\DD\UU^n)\cdot\nabla Z\dx+B_c[C^n,\UU^n,Z]&=0&&\forall\, Z\in\Z^n.\label{P_n2}
\end{align}

The existence of a solution to the discrete problem \eqref{P_n1}, \eqref{P_n2} follows by a combination of a fixed point argument and iteration. To prove the existence of a solution, we need the following lemma, which is a consequence of Brouwer's fixed point theorem (cf. \cite{EVANS}, Chapter 9).

\begin{lemma}\label{FPT}
Suppose that $v:\R^N\rightarrow\R^N$ is a continuous function, which satisfies
\[(\exists r> 0)\;\;(\forall\, x \in \R^N\,:\, |x| = r) \qquad v(x)\cdot x\geq0.\]
Then, there exists an $x\in B(0,r)$ such that $v(x)=0$.
\end{lemma}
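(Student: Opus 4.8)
The plan is to deduce this from Brouwer's fixed point theorem by a standard contradiction argument. Suppose, for contradiction, that $v(x)\neq 0$ for every $x$ in the closed ball $B(0,r)$, and consider the map
\[
w(x)\defeq -r\,\frac{v(x)}{\abs{v(x)}}.
\]
Under this hypothesis $w$ is well defined and continuous on $B(0,r)$, and it sends $B(0,r)$ into itself; indeed $\abs{w(x)}=r$ for every $x$, so $w$ maps $B(0,r)$ into the boundary sphere $\{\abs{x}=r\}\subset B(0,r)$. Since $B(0,r)$ is a nonempty compact convex set, Brouwer's fixed point theorem then furnishes a point $x_0\in B(0,r)$ with $w(x_0)=x_0$.

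Next I would exploit the location of this fixed point. Because $\abs{x_0}=\abs{w(x_0)}=r$, the hypothesis of the lemma applies at $x_0$ and yields $v(x_0)\cdot x_0\ge 0$. On the other hand, taking the scalar product of the fixed-point identity $x_0=w(x_0)$ with $v(x_0)$ gives
\[
v(x_0)\cdot x_0 = v(x_0)\cdot w(x_0) = -r\,\frac{v(x_0)\cdot v(x_0)}{\abs{v(x_0)}} = -r\,\abs{v(x_0)} < 0,
\]
where the strict inequality is valid precisely because $v(x_0)\neq 0$. These two statements are contradictory, so the assumption $v\neq 0$ on $B(0,r)$ is untenable; hence $v$ vanishes at some point of $B(0,r)$, which is the assertion.

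There is essentially no obstacle here beyond invoking Brouwer's theorem itself; the only points that require a little care are that the normalisation $w=-r\,v/\abs{v}$ is legitimate on the whole of $B(0,r)$ — this is exactly where the contradiction hypothesis $v\neq 0$ on the entire closed ball is used, so that $w$ is genuinely continuous and self-mapping — and that a fixed point of $w$ necessarily lies on the boundary sphere, which is what makes the sign condition $v(x)\cdot x\ge 0$ available at that point.
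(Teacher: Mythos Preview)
Your argument is correct and is precisely the standard derivation from Brouwer's fixed point theorem. The paper does not supply its own proof but simply refers to \cite{EVANS}, Chapter~9, where exactly this contradiction argument via the normalised map $w(x)=-r\,v(x)/|v(x)|$ is given.
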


Let us prove the existence of a solution to the discrete problem \eqref{P_n1}, \eqref{P_n2}. Let $\{\boldsymbol{w}_i\}_{i=1}^m$, $\{z_i\}_{i=1}^\ell$ be bases of $\V^n_{\rm{div}}$ and $\Z^n$ respectively,
satisfying
$\int_{\Omega}\boldsymbol{w}_i\boldsymbol{w}_j\dx=\delta_{ij}$ for $i,j=1,\dots,m$ and $\int_{\Omega}z_iz_j\dx=\delta_{ij}$
for $i,j=1,\dots,\ell$.

We wish to find $\UU^n\in \V^n_{\rm{div}}$, $C^n\in \Z^n+c_d$ of the forms
\[\UU^n=\sum^m_{i=1}\alpha_i\boldsymbol{w}_i,\qquad C^n=\sum^{\ell}_{i=1}\beta_iz_i+c_d\]
such that
\begin{align*}
\int_{\Omega}\SSS(C^n,\DD\UU^n)\cdot \DD \VV\dx+B_u[\UU^n,\UU^n,\VV]&=\langle \boldsymbol{f},\VV\rangle&&\forall\, \VV\in\V^n_{\rm{div}},\\
\int_{\Omega}\q_c(C^n,\nabla C^n,\DD\UU^n)\cdot\nabla Z\dx+B_c[C^n,\UU^n,Z]&=0&&\forall\, Z\in\Z^n.
\end{align*}
Equivalently, we can rewrite the above as
\begin{align*}
\int_{\Omega}\SSS(C^n,\DD\UU^n)\cdot \DD \boldsymbol{w}_i\dx+B_u[\UU^n,\UU^n,\boldsymbol{w}_i]&=\langle \boldsymbol{f},\boldsymbol{w}_i\rangle,&&i=1,\dots,m,\\
\int_{\Omega}\q_c(C^n,\nabla C^n,\DD\UU^n)\cdot\nabla z_j\dx+B_c[C^n,\UU^n,z_j]&=0,&&j=1,\dots,\ell.
\end{align*}

For a given $n\in\mathbb{N}$, we will construct an iteration scheme; i.e., we will define a sequence of solutions $\{\UU^n_k,C^n_k\}$ acting on the equations iteratively. As a first step, define $C^n_1\defeq c_d\in\Z^n+c_d$, and let $\UU^n_1\in\V^n_{\rm{div}}$ be a solution of
\begin{equation}\label{first}
\int_{\Omega}\SSS(C^n_1,\DD\UU^n_1)\cdot\nabla \boldsymbol{w}_i\dx+B_u[\UU^n_1,\UU^n_1,\boldsymbol{w}_i] =\langle \boldsymbol{f},\boldsymbol{w}_i \rangle, \qquad i=1,\dots, m.
\end{equation}
The existence of such a $\UU^n_1$ can be established as follows. Define the function $A:\R^m\rightarrow\R^m$ by
\[A(\alpha_1,\ldots,\alpha_m)=(a_1(\alpha_1,\ldots,\alpha_m),\ldots,a_m(\alpha_1,\ldots,\alpha_m))\]
with
\[a_j(\alpha_1,\ldots,\alpha_m)=\int_{\Omega}\SSS(C^n_1,\DD\boldsymbol{\alpha})\cdot\nabla \boldsymbol{w}_j\dx+B_u[\boldsymbol{\alpha},\boldsymbol{\alpha},\boldsymbol{w}_i] -\langle \boldsymbol{f},\boldsymbol{w}_j \rangle,\]
where
\[\boldsymbol{\alpha}=\sum^m_{i=1}\alpha_i\boldsymbol{w}_i.\]
We note that $A$ is a continuous function on $\R^m$. Then we have, by Sobolev embedding and because the term
$B_u[\boldsymbol{\alpha},\boldsymbol{\alpha},\boldsymbol{\alpha}]$ vanishes thanks to the skew-symmetry of the
trilinear form $B_u$, that
\begin{align*}
A(\alpha_1,\ldots,\alpha_m)\cdot(\alpha_1,\ldots,\alpha_m)
&=\int_{\Omega}\SSS(C^n_1,\DD\boldsymbol{\alpha})\cdot\nabla \boldsymbol{\alpha}\dx+B_u[\boldsymbol{\alpha},\boldsymbol{\alpha},\boldsymbol{\alpha}] -\langle \boldsymbol{f},\boldsymbol{\alpha} \rangle\\
&\geq C_1\int_{\Omega}|\DD\boldsymbol{\alpha}|^{r(C^n_1)}\dx-C_2-|\langle \boldsymbol{f},\boldsymbol{\alpha}\rangle|\\
&\geq C_1\int_{\Omega}|\DD\boldsymbol{\alpha}|^{r^-}\dx-\|\boldsymbol{f}\|_{(W^{1,r^-}_0(\Omega))^*}\|\boldsymbol{\alpha}\|_{1,r^-}-C_2\\
&\geq C_1\|\boldsymbol{\alpha}\|^{r^-}_{1,r^-}-C(\varepsilon)\|\boldsymbol{f}\|_{(W^{1,r^-}_0(\Omega))^*}^{(r^-)'}-\varepsilon\|\boldsymbol{\alpha}\|^{r^-}_{1,r^-}-C_2\\
&\geq (C_1-\varepsilon)\|\boldsymbol{\alpha}\|^{r^-}_2-C_2\\
&=(C_1-\varepsilon)|(\alpha_1,\ldots,\alpha_m)|^{r^-}-C_2.
\end{align*}
Hence by Lemma \ref{FPT}, there exists an $m$-tuple $(\alpha_1,\ldots,\alpha_m)\in\R^m$ such that
$A(\alpha_1,\ldots,\alpha_m)=0$, which implies the existence of $\UU^n_1\in\V^n_{\rm{div}}$.

Now, multiplying the $i$-th equation in \eqref{first} by $\alpha_i$ and taking the sum over $i=1,\ldots,m$, we obtain
\[\int_{\Omega}\SSS(C^n_1,\DD\UU^n_1)\cdot\nabla \UU^n_1\dx+B_u[\UU^n_1,\UU^n_1,\UU^n_1] =\langle \boldsymbol{f},\UU^n_1 \rangle,\]
where the term $B_u[\UU^n_1,\UU^n_1,\UU^n_1]=0$ thanks to the skew-symmetry of the trilinear form $B_u$. Then, because of the coercivity of $\SSS$, we have
\[C_1\int_{\Omega}|\DD\UU^n_1|^{r(C^n_1)}\dx-C_2\leq\|\boldsymbol{f}\|_{(W^{1,r^-}_0(\Omega)^d)^*}\|\UU^n_1\|_{1,r^-}.\]
By Young's inequality,
\[\|\DD\UU^n_1\|^{r^-}_{r^-}\leq C(\varepsilon)\|\boldsymbol{f}\|_{(W^{1,r^-}_{0}(\Omega)^d)^*}^{(r^-)'}+\varepsilon\|\UU^n_1\|^{r^-}_{1,r^-}+C_2.\]
Finally, by Korn's inequality,
\begin{equation}\label{iteration1}
\|\UU^n_1\|_{1,r^-}\leq C,
\end{equation}
where the constant $C$ is independent of $n$.

Now, let $C^n_2\in\Z^n+c_d$ be a solution of the equation
\begin{equation}\label{second}
\int_{\Omega}\q_c(C^n_2,\nabla C^n_2,\DD\UU^n_1)\cdot\nabla z_j\dx+B_c[C^n_2,\UU^n_1,z_j] =0,\qquad j=1,\dots, \ell.
\end{equation}
As before, we define a function $B:\R^{\ell}\rightarrow\R^{\ell}$ by
\[B(\beta_1,\ldots,\beta_{\ell})=(b_1(\beta_1,\ldots,\beta_{\ell}),\ldots,b_{\ell}(\beta_1,\ldots,\beta_{\ell}))\]
with
\[b_j(\beta_1,\ldots,\beta_{\ell})=\int_{\Omega}\q_c(\beta,\nabla \beta,\DD\UU^n_1)\cdot\nabla z_j\dx+B_c[\beta,\UU^n_1,z_j],\]
where
\[\beta=\sum^{\ell}_{i=1}\beta_iz_i+c_d.\]
We note that $B$ is continuous on $\R^\ell$. Furthermore, we have that
\begin{align*}
B(\beta_1,\ldots,\beta_{\ell})\cdot(\beta_1,\ldots,\beta_{\ell})
&=\int_{\Omega}\q_c(\beta,\nabla\beta,\DD\UU^n_1)\cdot\nabla(\beta-c_d)\dx
+B_c[\beta,\UU^n_1,\beta-c_d]\\
&=:\rm{I}+\rm{II},
\end{align*}
with obvious definitions of I and II. Since $\q_c$ is linear with respect to its second variable, by \eqref{q1} and \eqref{q2} we have that
\begin{align*}
{\rm{I}}
&=\int_{\Omega}\q_c(\beta,\nabla(\beta-c_d),\DD\UU^n_1)\cdot\nabla(\beta-c_d)+\q_c(\beta,\nabla c_d,\DD\UU^n_1)\cdot\nabla(\beta-c_d)\dx\\
&\geq C_5\|\nabla(\beta-c_d)\|^2_2-C_4\int_{\Omega}|\nabla c_d||\nabla(\beta-c_d)|\dx\\
&\geq C_5\|\nabla(\beta-c_d)\|^2_2-C_4\|\nabla c_d\|_2\|\nabla(\beta-c_d)\|_2\\
&\geq C_5\|\nabla(\beta-c_d)\|^2_2-C(\varepsilon)\|\nabla c_d\|^2_2-\varepsilon\|\nabla(\beta-c_d)\|^2_2\\
&\geq (C_5-\varepsilon)\|\nabla(\beta-c_d)\|^2_2-C.
\end{align*}
Also,
\begin{align*}
{\rm{II}}&=\frac{1}{2}\int_{\Omega}(\beta-c_d)\UU^n_1\cdot\nabla c_d\dx-\int_{\Omega}c_d\UU^n_1\cdot\nabla(\beta-c_d)\dx\\
&={\rm{I'}}+{\rm{II'}},
\end{align*}
with obvious definitions of ${\rm I'}$ and ${\rm II'}$. Concerning $\rm{I'}$, for sufficiently large $t>2$ (i.e. $t \in (2,\infty)$ when $d=2$, and $t \in (2,6]$ when $d=3$), we have by Sobolev embedding that
\begin{align*}
\bigg|\int_{\Omega}(\beta-c_d)\UU^n_1\cdot\nabla c_d\dx\bigg|&\leq\|\beta-c_d\|_t\|\UU^n_1\|_{\frac{2t}{t-2}}\|\nabla c_d\|_2\\
&\leq C\|\beta-c_d\|_{1,2}\|\UU^n_1\|_{1,r^-}\\
&\leq C\|\nabla(\beta-c_d)\|_{2}\|\UU^n_1\|_{1,r^-}\\
&\leq\varepsilon\|\nabla(\beta-c_d)\|_2^2+C(\varepsilon)\|\UU^n_1\|_{1,r^-}^2.
\end{align*}
Hence,
\[{\rm{I'}}\geq-\varepsilon\|\nabla(\beta-c_d)\|_2^2-C(\varepsilon)\|\UU^n_1\|_{1,r^-}^2.\]
For the term $\rm{II'}$, since $c_d$ is bounded, we have that
\begin{align*}
\int_{\Omega}c_d\UU^n_1\cdot\nabla(\beta-c_d)\dx&\leq C\int_{\Omega}\UU^n_1\nabla(\beta-c_d)\dx\\
&\leq C\|\UU^n_1\|_2\|\nabla(\beta-c_d)\|_2\\
&\leq C(\varepsilon)\|\UU^n_1\|^2_{1,r^-}+\varepsilon\|\nabla(\beta-c_d)\|^2_2.
\end{align*}
Therefore,
\[{\rm{II}}={\rm{I'}}+{\rm{II'}}\geq-2\varepsilon\|\nabla(\beta-c_d)\|^2_2- C(\varepsilon)\|\UU^n_1\|^2_{1,r^-},\]
and by \eqref{iteration1}, and choosing $\varepsilon \in \big(0,\frac{1}{6}C_5\big)$, we have that
\[B(\beta_1,\dots,\beta_{\ell})\cdot(\beta_1,\dots,\beta_{\ell})={\rm{I}}+{\rm{II}}\geq (C_5-3\varepsilon)\|\nabla(\beta-c_d)\|_2^2-C\geq\frac{1}{2}C_5\,|(\beta_1,\dots,\beta_{\ell})|^2-C.\]
Thus, again, by Lemma \ref{FPT}, there exists an $\ell$-tuple $(\beta_1,\ldots,\beta_{\ell})\in\R^{\ell}$ such that $B(\beta_1,\ldots,\beta_{\ell})=0$, which implies the existence of $C^n_2\in\Z^n$.

Next, multiplying the $j$-th equation of \eqref{second} by $\beta_j$ and taking the sum over $j=1,\ldots,\ell$, we arrive at
\[\int_{\Omega}\q_c(C^n_2,\nabla C^n_2,\DD\UU^n_1)\cdot\nabla (C^n_2-c_d)\dx+B_c[C^n_2,\UU^n_1,C^n_2-c_d]=0.\]
Hence, by \eqref{q1} and \eqref{q2},
\begin{align*}
\|\nabla C^n_2\|^2_2
&\leq C\int_{\Omega}\q_c(C^n_2,\nabla C^n_2,\DD\UU^n_1)\cdot\nabla c_d\dx-B_c[C^n_2,\UU^n_1,C^n_2-c_d]\\
&\leq C\int_{\Omega}|\nabla C^n_2||\nabla c_d|\dx-B_c[C^n_2,\UU^n_1,C^n_2]+B_c[C^n_2,\UU^n_1,c_d]\\
&\leq\varepsilon\|\nabla C^n_2\|^2_2+C(\varepsilon)\|\nabla c_d\|^2_2+B_c[C^n_2,\UU^n_1,c_d].
\end{align*}
By integration by parts, Sobolev embedding, H\"older's inequality and Young's inequality,
\begin{align*}
B_c[C^n_2,\UU^n_1,c_d]
&=\frac{1}{2}\int_{\Omega}c_d\UU^n_1\cdot\nabla C^n_2\dx-\frac{1}{2}\int_{\Omega}C^n_2\UU^n_1\cdot\nabla c_d\dx\\
&=\frac{1}{2}\int_{\Omega}c_d\UU^n_1\cdot\nabla C^n_2\dx+\frac{1}{2}\int_{\Omega}{\rm{div}}\,(C^n_2\UU^n_1)c_d\dx\\
&=\int_{\Omega}c_d\UU^n_1\cdot\nabla C^n_2\dx+\frac{1}{2}\int_{\Omega}C^n_2({\rm{div}}\,\UU^n_1)c_d\dx\\
&\leq \|c_d\|_{\infty}\|\UU^n_1\|_2\|\nabla C^n_2\|_2+\frac{1}{2}\|c_d\|_{\infty}\|C^n_2\|_{\frac{r^-}{r^--1}}\|{\rm{div}}\,\UU^n_1\|_{r^-}\\
&\leq C\|\UU^n_1\|_{1,r^-}\|\nabla C^n_2\|_2+C\|\UU^n_1\|_{1,r^-}\|\nabla C^n_2\|_{\frac{dr^-}{(d+1)r^--d}}\\
&\leq C(\varepsilon)\|\UU^n_1\|_{1,r^-}^2+\varepsilon\|\nabla C^n_2\|_2^2,
\end{align*}
provided that $\frac{2d}{d+2}< r^{-} \leq r^{+} < d$. Therefore, by \eqref{iteration1}, we obtain
\begin{equation}\label{iteration2}
\|\nabla C^n_2\|^2_2\leq C+C\|\UU^n_1\|^2_{1,r^-}\leq C,
\end{equation}
where the constant $C$ does not depend on $n$.

Now we define $\UU^n_2\in\V^n_{\rm{div}}$ as a solution of the equation
\[\int_{\Omega}\SSS(C^n_2,\DD\UU^n_2)\cdot\nabla \boldsymbol{w}_i\dx+B_u[\UU^n_2,\UU^n_2,\boldsymbol{w}_i]=\langle \boldsymbol{f},\boldsymbol{w}_i \rangle, \qquad i=1,\dots, m,\]
and define $C^n_3\in\Z^n+c_d$ as a solution of the equation
\[\int_{\Omega}\q_c(C^n_3,\nabla C^n_3,\DD\UU^n_2)\cdot\nabla z_j\dx+B_c[C^n_3,\UU^n_2,z_j]=0,\qquad j=1,\dots, \ell.\]
The existence of such $\UU^n_2$ and $C^n_3$ can be established by the same argument as above. We continue this process so that we obtain, by iteration, a sequence of solutions
$\{\UU^n_k,C^n_k\}\in\V^n_{\rm{div}}\times\Z^n+c_d$ where $\UU^n_k$ is a solution of the equation
\begin{equation}\label{iter1}
\int_{\Omega}\SSS(C^n_k,\DD\UU^n_k)\cdot\nabla \boldsymbol{w}_i\dx+B_u[\UU^n_k,\UU^n_k,\boldsymbol{w}_i] =\langle \boldsymbol{f},\boldsymbol{w}_i \rangle, \qquad i=1,\dots, m,
\end{equation}
and $C^n_{k+1}$ is a solution of the equation
\begin{equation}\label{iter2}
\int_{\Omega}\q_c(C^n_{k+1},\nabla C^n_{k+1},\DD\UU^n_k)\cdot\nabla z_j\dx+B_c[C^n_{k+1},\UU^n_k,z_j]=0,\qquad j=1,\dots, \ell.
\end{equation}
Also, by the same argument as the one we used to derive the bounds \eqref{iteration1} and \eqref{iteration2}, we have
\[\|\UU^n_k\|_{1,r^-}\leq C_1\qquad{\rm{and}}\qquad\|\nabla C^n_k\|^2_2\leq C_2,\]
where $C_1$, $C_2$ are positive constants, independent of $k\in\mathbb{N}$.

Now we consider the spaces $\left(\V^n_{\rm{div}},\/\|\cdot\|_{1,r^-}\right)$ and $\left(\Z^n,\/\|\cdot\|_{1,2}\right)$. Both spaces are finite-dimensional, hence by the Bolzano--Weierstass theorem there exists subsequences (not relabelled) such that
\begin{alignat*}{2}
\UU^n_k&\rightarrow \UU^n \qquad &&{\rm{(strongly)}}\,\,{\rm{in}}\,\,\V^n_{\rm{div}},\\
C^n_k-c_d&\rightarrow C^n-c_d \qquad &&{\rm{(strongly)}}\,\,{\rm{in}}\,\,\Z^n.
\end{alignat*}
By the equivalence of norms in finite-dimensional spaces, as $k\rightarrow\infty$, and for each fixed $n \in \mathbb{N}$,
\begin{align*}
\|\UU^n_k-\UU^n\|_{1,\infty}&\leq C(n)\|\UU^n_k-\UU^n\|_{1,r^-}\rightarrow0,\\
\|C^n_k-C^n\|_{1,\infty}&\leq C(n)\|C^n_k-C^n\|_{1,2}\,\,\,\,\rightarrow0.
\end{align*}
Since convergence of a sequence of functions in the $L^{\infty}$-norm implies uniform convergence, we deduce that
\[
\UU^n_k\rightrightarrows \UU^n\,\,\,\,\,
\DD\UU^n_k\rightrightarrows \DD\UU^n,\,\,\,\,\,
C^n_k\rightrightarrows C^n\,\,\,\,\,{\rm{and}}\,\,\,\,\,
\nabla C^n_k\rightrightarrows \nabla C^n\,\,\,\,\,\mbox{a.e. on $\overline{\Omega}$}.
\]
Note further that since $\SSS$ and $\q_c$ are continuous, also
\[\SSS(C^n_k,\DD\UU^n_k)\rightrightarrows \SSS(C^n,\DD\UU^n)\,\,\,\,\,{\rm{and}}\,\,\,\,\,
\q_c(C^n_{k+1},\nabla C^n_{k+1},\DD\UU^n_k)\rightrightarrows \q_c(C^n,\nabla C^n,\DD\UU^n)\,\,\,\,\,\mbox{a.e. on $\overline{\Omega}$}.\]
Now if we pass to the limit $k\rightarrow\infty$ in \eqref{iter1} and \eqref{iter2}, we have
\begin{alignat*}{2}
\int_{\Omega}\SSS(C^n,\DD\UU^n)\cdot\nabla \boldsymbol{w}_i\dx+B_u[\UU^n,\UU^n,\boldsymbol{w}_i] &=\langle \boldsymbol{f},\boldsymbol{w}_i \rangle, \qquad&& i=1,\dots, m,\\
\int_{\Omega}\q_c(C^n,\nabla C^n,\DD\UU^n)\cdot\nabla z_j\dx+B_c[C^n,\UU^n,z_j] &=0,\qquad&& j=1,\dots, \ell.
\end{alignat*}
Therefore, we have established the existence of a solution to the Galerkin approximations \eqref{P_n1} and \eqref{P_n2} for a given $n>0$. The existence of a discrete solution triple for \eqref{Q_n1}--\eqref{Q_n3} then follows by the discrete inf-sup condition stated in Proposition \ref{dis_infsup}.

Our objective is now to pass to the limit $n\rightarrow\infty$. To this end we require two technical tools: a finite element
counterpart of the Acerbi--Fusco Lipschitz truncation method in variable-exponent Sobolev spaces, and a finite element counterpart of De Giorgi's regularity theorem for elliptic problems. We shall discuss these in the next two subsections, respectively. The finite element De Giorgi estimate considered here is restricted to the case of two space dimensions ($d=2$), as our proof rests on a discrete version of Meyers' regularity estimate in conjunction with Morrey's embedding
theorem, which,
by the nature of the argument, is limited to the case of $d=2$. A direct proof of a discrete De Giorgi estimate in the case of
$d\geq 2$, for Poisson's equation with a source term in $W^{-1,p}(\Omega)$ and $p>d$,
is contained in \cite{AC86}, subject to a restriction on the finite element stiffness matrix, analogous to
the assumption that is usually made to ensure that the discrete maximum principle holds. It is stated there,
without proof, that more general operators may be covered with little or no change, including, for instance, ``any uniformly elliptic operator in divergence form with bounded measurable coefficients''. Indeed, Casado-D\'\i az et al. \cite{CCGGM07} consider linear elliptic problems of the form $- \mbox{div}(A \nabla u) = f$ with $A \in L^\infty(\Omega)^{d\times d}$
 uniformly elliptic and $f\in L^1(\Omega)$, and assume diagonal dominance of the associated finite element
stiffness matrix, a condition, which now also involves the bounded measurable matrix function $A$ (cf. (1.17) there). As in our setting the concentration equation is nonlinear,
and the diffusion coefficient is a nonlinear function of both the concentration and the Frobenius
norm of the velocity gradient, it is unclear how exactly such a diagonal dominance condition on the associated stiffness matrix would translate into a practically verifiable restriction on the sequence of triangulations. We have therefore
confined ourselves here to the case of $d=2$. 
\end{subsection}

\begin{subsection}{Discrete Lipschitz truncation}

{\color{black}{The Lipschitz truncation method has a crucial role in the proof of our main result,}} which will be stated in the next section. In this section, we shall introduce a discrete Lipschitz truncation, acting on finite element spaces, following the ideas by Diening et al. in \cite{DKS2013}, as the composition of a `continuous' Lipschitz truncation and the projection defined in Assumption 2. For this reason, as a starting point for the construction, we shall first recall
a  result by  Diening et al. \cite{DMS2008}  concerning Lipschitz truncation in $W^{1,1}_0(\Omega)^d$, which refines the original estimates by Acerbi \& Fusco \cite{AF1988}. Note that in the following theorem the no-slip boundary condition on $\partial\Omega$ is preserved under Lipschitz truncation.

Let $\vv\in W^{1,1}_0(\Omega)^d$. We can then assume that $\vv\in W^{1,1}(\R^d)^d$ by extending $\vv$ by zero outside $\Omega$. For fixed $\lambda>0$, we define
\[\U_{\lambda}(\vv)\defeq\{M(\nabla \vv)>\lambda\}\]
and
\[\mathcal{H}_{\lambda}(\vv)\defeq\R^d\setminus(\U_{\lambda}(\vv)\cap\Omega)=\{M(\nabla \vv)\leq\lambda\}\cup(\R^d\setminus\Omega).\]
As $M(\nabla \vv)$ is lower-semicontinuous, the set $\U_{\lambda}(\vv)$ is open and the set $\mathcal{H}_{\lambda}(\vv)$ is closed.

\begin{theorem}\label{CLT}
Let $\lambda>0$ and $\vv\in W^{1,1}_0(\Omega)^d$. Then there exists a Lipschitz truncation $\vv_{\lambda}\in W^{1,\infty}_0(\Omega)^d$ satisfying the following properties:
\begin{itemize}
\item[(a)] $\vv_{\lambda}=\vv$ on $\mathcal{H}_{\lambda}(\vv)$, i.e., $\{\vv\neq \vv_{\lambda}\}\subset\{M(\nabla \vv)>\lambda\}\cap\Omega$;
\item[(b)] $\|\vv_{\lambda}\|_s\leq C\|\vv\|_s$ for all $s\in[1,\infty]$, provided that $\vv \in L^s(\Omega)^d$;
\item[(c)] $\|\nabla \vv_{\lambda}\|_s\leq C\|\nabla \vv\|_s$ for all $s\in[1,\infty]$, provided that $\vv \in W^{1,s}(\Omega)^d$;
\item[(d)] $\|\nabla \vv_{\lambda}\|_{\infty}\leq C\lambda$ almost everywhere in $\R^d$.
\end{itemize}
The constant $C$ in the inequalities stated in parts (b), (c) and (d) depends on $\Omega$ and $d$. In (b) and (c),
the constant $C$ additionally depends on $s$.
\end{theorem}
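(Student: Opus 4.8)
The plan is to reproduce, in streamlined form, the Acerbi--Fusco construction in the sharpened version of Diening, M\'alek \& Steinhauer \cite{DMS2008}: one isolates the set on which $\vv$ is already Lipschitz and extends the restriction of $\vv$ to that set by a Whitney-type extension. First I would extend $\vv$ by zero to $\R^d$, so that $\vv\in W^{1,1}(\R^d)^d$ with $\mathrm{supp}\,\vv\subset\overline{\Omega}$. The analytic input is the classical pointwise bound for Sobolev functions,
\[|\vv(x)-\vv(y)|\le C\,|x-y|\bigl(M(\nabla\vv)(x)+M(\nabla\vv)(y)\bigr)\qquad\text{for a.e. } x,y\in\R^d,\]
which follows from the Riesz-potential representation of $\vv(x)-\fint_{B(x,r)}\vv\dy$ together with a telescoping argument over dyadic balls. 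Since $\Omega$ is a Lipschitz domain, its complement satisfies a measure-density condition, and a Poincar\'e inequality on balls centred at the nearest boundary point upgrades this, for $\vv\in W^{1,1}_0(\Omega)^d$, to the boundary estimate $|\vv(x)|\le C\,\mathrm{dist}(x,\partial\Omega)\,M(\nabla\vv)(x)$ for a.e.\ $x$.

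Using these two estimates I would show that the restriction of $\vv$ to the closed set $\mathcal{H}_\lambda(\vv)=\{M(\nabla\vv)\le\lambda\}\cup(\R^d\setminus\Omega)$ is Lipschitz with constant $C\lambda$: for $x,y$ both in $\{M(\nabla\vv)\le\lambda\}$ this is immediate; if $y\in\R^d\setminus\Omega$ then $|\vv(x)-\vv(y)|=|\vv(x)|\le C\lambda\,\mathrm{dist}(x,\partial\Omega)\le C\lambda|x-y|$; and for $x,y\in\R^d\setminus\Omega$ the difference vanishes. Next I would fix a Whitney covering $\{Q_j\}$ of the open set $\U_\lambda(\vv)\cap\Omega$ with a subordinate partition of unity $\{\varphi_j\}$ satisfying $\sum_j\varphi_j\equiv1$ there, $\|\nabla\varphi_j\|_\infty\le C/\ell(Q_j)$ and bounded overlap, and define $\vv_\lambda:=\vv$ on $\mathcal{H}_\lambda(\vv)$ and $\vv_\lambda:=\sum_j\varphi_j\,\vv_{Q_j}$ on $\U_\lambda(\vv)\cap\Omega$, where $\vv_{Q_j}$ is the average of $\vv$ over a ball obtained by dilating $Q_j$ until it reaches $\mathcal{H}_\lambda(\vv)$. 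The standard Whitney estimates, using the Lipschitz bound above and $\sum_j\nabla\varphi_j\equiv0$, show that $\vv_\lambda\in W^{1,\infty}(\R^d)^d$ with $\|\nabla\vv_\lambda\|_\infty\le C\lambda$ almost everywhere, which is (d); property (a) holds by construction, and since $\vv_\lambda$ coincides with $\vv\equiv0$ on $\R^d\setminus\Omega$ and is continuous, $\vv_\lambda\in W^{1,\infty}_0(\Omega)^d$.

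It remains to establish the stability estimates (b) and (c). On $\mathcal{H}_\lambda(\vv)$ one has $\vv_\lambda=\vv$, and at Lebesgue points of $\nabla\vv$ lying in $\{M(\nabla\vv)\le\lambda\}$ also $\nabla\vv_\lambda=\nabla\vv$, so the contributions from that set are controlled directly by $\|\vv\|_s$ and $\|\nabla\vv\|_s$. On the bad set $\U_\lambda(\vv)\cap\Omega$ each of $|\vv_\lambda|$ and $|\nabla\vv_\lambda|$ is dominated pointwise, via the partition of unity and the pointwise estimates above, by $C\,M(\vv)$ and by $\min\{C\lambda,\,C\,M(\nabla\vv)\}$ respectively; combining the weak-$(1,1)$ and strong-$(s,s)$ bounds for the Hardy--Littlewood maximal operator with $|\U_\lambda(\vv)|\le C\lambda^{-1}\|\nabla\vv\|_1$ then yields (b) and (c) for all $s\in[1,\infty)$, while the cases $s=\infty$ follow at once from the trivial $L^\infty$-bounds together with (d).

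The main obstacle is the boundary behaviour in the first two steps: the statement insists that the no-slip condition is preserved, so one must prove that the good restriction of $\vv$ is genuinely Lipschitz up to and across $\partial\Omega$, and this is precisely where the Lipschitz regularity of $\Omega$ enters, through the complementary measure-density estimate and the associated Poincar\'e inequality. A secondary, purely technical, difficulty is the bookkeeping needed to obtain constants in (b) and (c) that do not depend on $\lambda$; this is the content of the refinement of \cite{DMS2008} over the original argument of Acerbi \& Fusco \cite{AF1988}, and it hinges on exploiting the maximal-function domination of $|\nabla\vv_\lambda|$ rather than the crude bound $|\nabla\vv_\lambda|\le C\lambda$ alone.
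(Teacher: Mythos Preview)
Your proposal is correct and follows the standard Acerbi--Fusco construction as refined in \cite{DMS2008}. Note, however, that the paper does not actually prove Theorem~\ref{CLT}: it is stated without proof as a quotation from \cite{DMS2008} (with \cite{AF1988} cited for the original argument), serving only as input for the subsequent discrete construction. So there is nothing to compare at the level of argument; your sketch simply supplies what the paper takes for granted, and does so faithfully to the cited source, including the two points you flag as obstacles---preservation of the zero boundary trace via the Lipschitz regularity of $\Omega$, and the $\lambda$-independence of the constants in (b) and (c) via the maximal-function domination---which are exactly the refinements of \cite{DMS2008} over \cite{AF1988} that the paper alludes to.
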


{\color{black}{Next, following Diening et al. \cite{DKS2013}, we modify the `continuous' Lipschitz truncation so that the resulting truncation is again a finite element function.

Since $\V^n\subset W^{1,1}_0(\Omega)^d$ for all $n\in\mathbb{N}$,  we can apply Theorem \ref{CLT} with arbitrary $\lambda>0$. Note however that the Lipschitz truncation $\VV_{\lambda}$ of $\VV\in\V^n$ is not contained in $\V^n$  in general. Thus we define the discrete Lipschitz truncation by}}
\begin{equation}\label{DLTdef}
\VV_{\lambda}^n\defeq\Pi^n_{\rm{div}}\circ \VV_{\lambda}\in\V^n.
\end{equation}

According to the next lemma, which we quote from \cite{DKS2013} (cf. Lemma 14 in \cite{DKS2013}), the projection operator $\Pi^n_{\rm{div}}$ modifies $\VV_{\lambda}$ in a neighborhood of $\U_{\lambda}(\VV)$ only.

\begin{lemma}\label{DCTL1}
Let $\VV\in\V^n$; then, we have that
\[\{\VV_{\lambda}^n\neq \VV\}\subset\Omega_{\lambda}^n(\VV)\defeq{\rm{interior}}\left(\bigcup\{ S_E:E\in \G_n\,\,{\rm{with}}\,\,E\cap\mathcal{U}_{\lambda}(\VV)\neq\emptyset\}\right).\]
\end{lemma}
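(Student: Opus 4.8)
The plan is to read the claim as a statement about where the projection operator $\Pi^n_{\rm div}$ can change the continuous Lipschitz truncation, and to combine the \emph{support} property of $\VV_\lambda$ (Theorem~\ref{CLT}(a)) with the \emph{exact locality} encoded in the stability estimate \eqref{3.2}. First I would note that, since $\VV\in\V^n\subset W^{1,1}_0(\Omega)^d$ and $\Pi^n_{\rm div}$ is a linear projection onto $\V^n$, we have $\Pi^n_{\rm div}\VV=\VV$; hence, by linearity of $\Pi^n_{\rm div}$ and of the truncation construction applied as a difference,
\[
\VV_\lambda^n-\VV=\Pi^n_{\rm div}\VV_\lambda-\Pi^n_{\rm div}\VV=\Pi^n_{\rm div}(\VV_\lambda-\VV).
\]
By Theorem~\ref{CLT}(a), $\{\VV_\lambda\neq\VV\}\subset\mathcal{U}_\lambda(\VV)\cap\Omega$, so the function $\vv\defeq\VV_\lambda-\VV\in W^{1,1}_0(\Omega)^d$ vanishes a.e.\ outside $\mathcal{U}_\lambda(\VV)$.

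The core step is then the following: fix $E\in\G_n$ with $S_E\cap\mathcal{U}_\lambda(\VV)=\emptyset$. Then $\vv=0$ a.e.\ on $S_E$, so the right-hand side of \eqref{3.2} applied to $\vv$ is identically zero; since the integrand on the left-hand side of \eqref{3.2} is nonnegative, this forces $\Pi^n_{\rm div}\vv=0$ a.e.\ on $E$, and, because $\Pi^n_{\rm div}\vv\in\V^n\subset C(\overline\Omega)^d$, in fact $\Pi^n_{\rm div}\vv\equiv 0$ on $\overline E$, i.e.\ $\VV_\lambda^n=\VV$ on $\overline E$. Therefore every point at which $\VV_\lambda^n\neq\VV$ lies in some element $E$ with $S_E\cap\mathcal{U}_\lambda(\VV)\neq\emptyset$, so
\[
\{\VV_\lambda^n\neq\VV\}\subset\bigcup\big\{E\in\G_n:\,S_E\cap\mathcal{U}_\lambda(\VV)\neq\emptyset\big\}.
\]
To match the index set in the statement, I would use a short covering argument: if $S_E\cap\mathcal{U}_\lambda(\VV)\neq\emptyset$, pick a point $p$ in this intersection; since $S_E$ is a union of elements $E''$ with $E''\cap E\neq\emptyset$, there is such an $E''$ with $p\in E''$, so $E''\cap\mathcal{U}_\lambda(\VV)\neq\emptyset$ and, because $E\cap E''\neq\emptyset$, also $E\subset S_{E''}$. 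Hence $\bigcup\{E:S_E\cap\mathcal{U}_\lambda(\VV)\neq\emptyset\}\subset\bigcup\{S_{E'}:E'\cap\mathcal{U}_\lambda(\VV)\neq\emptyset\}$. Finally, $\VV_\lambda^n$ and $\VV$ are continuous (extended by zero to $\R^d$), so $\{\VV_\lambda^n\neq\VV\}$ is open; being contained in the above union, it is contained in its interior, which is precisely $\Omega_\lambda^n(\VV)$.

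I do not expect a genuine obstacle here: the whole argument hinges on the observation that the right-hand side of the local stability bound \eqref{3.2} is homogeneous, so that a vanishing patch-average forces $\Pi^n_{\rm div}$ to vanish \emph{exactly} on the element, not merely to be small; everything else is elementary bookkeeping about the patches $S_E$, the (innocuous) passage from ``$S_E$ meets $\mathcal{U}_\lambda(\VV)$'' to ``$E'$ meets $\mathcal{U}_\lambda(\VV)$'' in the index set, and the use of continuity of finite element functions to upgrade a set inclusion to an inclusion into the interior. The only points requiring a little care are keeping track of which sets are open or closed and ensuring the projection-onto-$\V^n$ property $\Pi^n_{\rm div}\VV=\VV$ is invoked correctly.
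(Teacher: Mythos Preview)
Your proof is correct and follows essentially the same route as the paper (which quotes the lemma from \cite{DKS2013}): use the projection property $\Pi^n_{\rm div}\VV=\VV$ together with Theorem~\ref{CLT}(a) to reduce to $\Pi^n_{\rm div}(\VV_\lambda-\VV)$, and then exploit the homogeneity of the local stability bound \eqref{3.2} to force $\Pi^n_{\rm div}(\VV_\lambda-\VV)=0$ on every element whose patch misses $\mathcal{U}_\lambda(\VV)$. You are in fact slightly more explicit than the original about the patch-bookkeeping step (passing from ``$S_E\cap\mathcal{U}_\lambda(\VV)\neq\emptyset$'' to ``$E\subset S_{E'}$ for some $E'$ meeting $\mathcal{U}_\lambda(\VV)$'') and about why the inclusion upgrades to the interior, which the reference states without comment.
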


The set $\Omega^n_{\lambda}(\vv)$ from Lemma \ref{DCTL1} is clearly larger than $\U_{\lambda}(\VV)\cap\Omega$. {\color{black}{However, according to the following result, we can still control the increase of the set. This is the most important step in the construction of the discrete Lipschitz truncation;}}
Lemma \ref{DLTL2} is,
again, quoted from \cite{DKS2013}.

\begin{lemma}\label{DLTL2}
For $n\in\mathbb{N}$, $\VV\in\V^n$ and $\lambda>0$, let $\Omega^n_{\lambda}(\VV)$ be defined as in {\rm{Lemma \ref{DCTL1}}}. Then, there exists a constant $\kappa\in(0,1)$, only depending on $\hat{\mathbb{P}}_{\V}$ and the shape-regularity of $\G_n$, such that
\[\U_{\lambda}(\VV)\cap\Omega\subset\Omega^n_{\lambda}(\VV)\subset\U_{\kappa\lambda}(\VV)\cap\Omega.\]
\end{lemma}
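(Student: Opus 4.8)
The plan is to prove the two inclusions separately; only the right-hand one requires work. For $\U_{\lambda}(\VV)\cap\Omega\subset\Omega_{\lambda}^n(\VV)$, fix $x\in\U_{\lambda}(\VV)\cap\Omega$. Since $M(\nabla\VV)$ is lower-semicontinuous, $\U_{\lambda}(\VV)$ is open, and $\Omega$ is open, so there is a $\delta>0$ with $B_{\delta}(x)\subset\U_{\lambda}(\VV)\cap\Omega$. Every element $E\in\G_n$ with $E\cap B_{\delta}(x)\neq\emptyset$ then also satisfies $E\cap\U_{\lambda}(\VV)\neq\emptyset$, so the corresponding patch $S_E$ appears in the union defining $\Omega_{\lambda}^n(\VV)$ in Lemma~\ref{DCTL1}; as $B_{\delta}(x)\subset\bigcup\{E\in\G_n:E\cap B_{\delta}(x)\neq\emptyset\}$, the whole ball $B_{\delta}(x)$ lies in that union, whence $x$ is an interior point of it, i.e.\ $x\in\Omega_{\lambda}^n(\VV)$.

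For the reverse inclusion, fix $x\in\Omega_{\lambda}^n(\VV)$, so $x\in S_E$ for some $E\in\G_n$ with $E\cap\U_{\lambda}(\VV)\neq\emptyset$; moreover $x\in\Omega$ because $\Omega_{\lambda}^n(\VV)$ is the interior of a finite union of patches contained in $\overline{\Omega}$. Pick $y\in E\cap\U_{\lambda}(\VV)$; by definition of the maximal operator there is a radius $\rho>0$ with $\fint_{B_{\rho}(y)}\abs{\nabla\VV}\,\mathrm{d}z>\lambda$ (recall that $\VV$ has been extended by zero to $\R^d$, so $\nabla\VV=0$ off $\Omega$). The proof then uses two geometric facts that follow from affine equivalence and shape-regularity of $\{\G_n\}$ together with the conforming structure of the meshes: first, all elements of a patch have diameters comparable to $h_E$, so ${\rm diam}(S_E)\leq C_0 h_E$; secondly, there is a threshold $c_{\ast}\in(0,1)$, depending only on the shape-regularity, such that every element of $\G_n$ meeting $B_{c_{\ast}h_E}(y)$ belongs to $S_E$ (the ``collar'' $S_E\setminus E$ has thickness $\gtrsim h_E$). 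I would split the argument according to whether $\rho\geq c_{\ast}h_E$ or $\rho<c_{\ast}h_E$.

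In the first case, $\abs{x-y}\leq{\rm diam}(S_E)\leq C_0 h_E\leq (C_0/c_{\ast})\rho$, hence $B_{\rho}(y)\subset B_{(1+C_0/c_{\ast})\rho}(x)$, and therefore
\[ M(\nabla\VV)(x)\;\geq\;\fint_{B_{(1+C_0/c_{\ast})\rho}(x)}\abs{\nabla\VV}\,\mathrm{d}z\;\geq\;\Big(1+\tfrac{C_0}{c_{\ast}}\Big)^{-d}\fint_{B_{\rho}(y)}\abs{\nabla\VV}\,\mathrm{d}z\;>\;\Big(1+\tfrac{C_0}{c_{\ast}}\Big)^{-d}\lambda. \]
In the second case every element meeting $B_{\rho}(y)$ lies in $S_E$, and a pigeonhole argument over the finitely many such elements gives an element $\tilde E^{\ast}\subset S_E$ with $\norm{\nabla\VV}_{L^{\infty}(\tilde E^{\ast})}\geq\fint_{B_{\rho}(y)}\abs{\nabla\VV}\,\mathrm{d}z>\lambda$. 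Since $\nabla\VV$ restricted to $\tilde E^{\ast}$ becomes, after pullback by $\boldsymbol{F}_{\tilde E^{\ast}}$, an element of the fixed finite-dimensional space of gradients of functions in $\hat{\mathbb{P}}_{\V}$, equivalence of norms on finite-dimensional spaces and the usual scaling argument give $\norm{\nabla\VV}_{L^{\infty}(\tilde E^{\ast})}\leq C\fint_{\tilde E^{\ast}}\abs{\nabla\VV}\,\mathrm{d}z$, with $C$ depending only on $\hat{\mathbb{P}}_{\V}$ and the shape-regularity. As $\tilde E^{\ast}\subset S_E\ni x$ and ${\rm diam}\,\tilde E^{\ast}\leq h_E$, one has $\tilde E^{\ast}\subset B_R(x)$ for $R:=(1+C_0)h_E$, while $\abs{\tilde E^{\ast}}\geq c\,h_E^d\geq c'\abs{B_R(x)}$ by shape-regularity, so that
\[ M(\nabla\VV)(x)\;\geq\;\fint_{B_R(x)}\abs{\nabla\VV}\,\mathrm{d}z\;\geq\;\frac{\abs{\tilde E^{\ast}}}{\abs{B_R(x)}}\fint_{\tilde E^{\ast}}\abs{\nabla\VV}\,\mathrm{d}z\;>\;c'C^{-1}\lambda. \]
Taking $\kappa$ to be the minimum of the two constants $(1+C_0/c_{\ast})^{-d}$ and $c'C^{-1}$ (both of which lie in $(0,1)$, and may be decreased so that $\kappa<1$) yields $M(\nabla\VV)(x)>\kappa\lambda$, i.e.\ $x\in\U_{\kappa\lambda}(\VV)\cap\Omega$, as required.

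I expect the only genuinely delicate point to be the bookkeeping of the geometric constants used in the small-radius case: one must check carefully that the threshold $c_{\ast}$, the patch-diameter ratio $C_0$, the volume bound $\abs{\tilde E^{\ast}}\gtrsim h_E^d$, and the inverse (norm-equivalence) constant can all be chosen uniformly in $n$ and in the element, depending only on $\hat{\mathbb{P}}_{\V}$ and on the shape-regularity of the family $\{\G_n\}$ — this is exactly where the affine-equivalence and shape-regularity hypotheses are used in full. The remaining ingredients (openness of $\U_{\lambda}(\VV)$, enlargement of balls, and passing between the $L^{\infty}$ norm and the mean of a fixed-degree polynomial on a single element) are entirely routine.
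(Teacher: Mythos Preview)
Your argument is correct and follows the same two–case strategy as the proof in \cite{DKS2013} (which the paper cites for this lemma; the paper's own source actually contains the proof, commented out). The only cosmetic differences are in the case split and in how the small-radius case is handled. The paper distinguishes the cases according to whether $B_R(x_0)\subset S_E\cup(\R^d\setminus\Omega)$ or not; in the ``small'' case it bounds $\fint_{B_R(x_0)}|\nabla\VV|\leq\|\nabla\VV\|_{L^\infty(S_E)}\leq\tilde c_1\fint_{S_E}|\nabla\VV|$ by applying the inverse estimate directly on the \emph{patch} $S_E$, and then encloses $S_E$ in a ball centred at $x$ of comparable volume. You instead split at a threshold $\rho\gtrless c_\ast h_E$ and, in the small case, use a pigeonhole to locate a single element $\tilde E^\ast\subset S_E$ on which the inverse estimate is applied. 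Both routes rely on the same geometric fact (any point of $\Omega$ within distance $\lesssim h_E$ of $E$ lies in $S_E$, equivalently $R\gtrsim h_E$ whenever $B_R(x_0)$ escapes $S_E$ inside $\Omega$) and on the $L^\infty$--$L^1$ norm equivalence for $\nabla\VV$ coming from $\hat{\mathbb P}_{\V}$; the paper's version is marginally shorter because it avoids the pigeonhole step, while yours has the minor advantage of invoking the inverse estimate only on a single element rather than on a patch.
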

{%
\color{blue}{}
}

Now we are ready to state and prove the discrete Lipschitz truncation theorem, which has a suitable form for our problem. The couple $(\VV^n,C^n)$ denotes a sequence of approximate solutions and define the variable Lebesgue exponent
\[r^n(x)\defeq(r\circ C^n)(x)\qquad{\rm{for}}\,\,{\rm{all}}\,\,x\in\overline{\Omega}.\]
{\color{black}{The following theorem is a generalization of  the result stated in Theorem \ref{CLT}. Here, however, we have the added difficulty that the variable exponent changes with the given sequence.}}

\begin{theorem}\label{LTmain}
Let $\Omega\subset\R^d$ be an bounded open Lipschitz domain and suppose that $\{\VV^n,r^n\}$ is a sequence satisfying $1<r^-\leq r^n(x)\leq r^+<\infty$ for all $x\in\overline{\Omega}$ and
\begin{align}
\VV^n&\rightharpoonup \VV\qquad{\rm{weakly}}\,\,{\rm{in}}\,\,{W^{1,r^-}_0(\Omega)^d},\label{LTv}\\
r^n&\rightarrow r\qquad \,\,\,{\rm{strongly}}\,\,{\rm{in}}\,\, C^{0,\alpha}(\overline{\Omega})\label{LTp}
\end{align}
for some $\alpha\in(0,1)$. Assume further that, for all $n\in\mathbb{N}$,
\begin{equation}\label{main4.4}
\int_{\Omega}|\nabla \VV^n|^{r^n(x)}\dx\leq C.
\end{equation}
 Then, for each $j\in\mathbb{N}$, there exists a sequence $\{\lambda_j^n\}_{n \in \mathbb{N}}$ such that
\begin{equation}\label{main4.6}
(2^j)^{2^j}\leq\lambda_j^n<(2^{j+1})^{2^{j+1}},
\end{equation}
and a sequence of Lipschitz truncations $\{\VV_j^n\}_{n \in \mathbb{N}}\subset \V^n\subset W^{1,\infty}(\Omega)^d$ such that
for all $n,j\in\mathbb{N}$,
\begin{equation}\label{main4.7}
\|\nabla \VV_j^n\|_{\infty}\leq C\lambda_j^n\leq C(2^{j+1})^{2^{j+1}}.
\end{equation}
In addition, we can extract a (not relabelled) subsequence with respect to $n$ such that, for each $j\in\mathbb{N}$,
\begin{align}
\VV_j^n&\rightarrow \VV_{\!j}&&{\rm{strongly}}\,\,{\rm{in}}\,\,L^{\sigma}(\Omega)^d\,\,{\rm{for}}\,\,{\rm{all}}\,\,\sigma\in(1,\infty),\label{main4.8}\\
\VV_j^n&\rightharpoonup \VV_{\!j}&&{\rm{weakly}}\,\,{\rm{in}}\,\,W^{1,\sigma}(\Omega)^d\,\,{\rm{for}}\,\,{\rm{all}}\,\,\sigma\in(1,\infty),\label{main4.9}\\
\nabla \VV^n_j&\rightharpoonup^*\nabla \VV_{\!j}&&{\mbox{weakly$^*$}}\,\,{\rm{in}}\,\,L^{\infty}(\Omega)^{d\times d},\label{main4.10}
\end{align}
where $\VV_{\!j}\in W^{1,\infty}(\Omega)^d$. Moreover,
\begin{equation}\label{main4.11}
\|\nabla \VV_{\!j}\|_{r(\cdot)}\leq C,
\end{equation}
and we can extract a (not relabelled) subsequence so that
\begin{equation}\label{vjconv}
\VV_{\!j}\rightharpoonup \VV\qquad{\rm{weakly}}\,\,{\rm{in}}\,\,W^{1,r(\cdot)}(\Omega)^d.
\end{equation}

Furthermore, if we extend $\VV^n$ outside $\overline\Omega$ by zero, we have
\begin{equation}\label{main4.12}
\{x\in\Omega:\VV^n_j\neq \VV^n\}\subset\{x\in\Omega:M(\nabla \VV^n)>\kappa\lambda^n_j\},
\end{equation}
where $\kappa$ is defined in {\rm{Lemma \ref{DLTL2}}}, and for all $n,j$,
\begin{equation}\label{main4.13}
\int_{\Omega}|\nabla \VV^n_j\chi_{\{\VV^n_j\neq \VV^n\}}|^{r^n(x)}\dx\leq C\int_{\Omega}|\lambda^n_j\chi_{\{\VV^n_j\neq \VV^n\}}|^{r^n(x)}\dx\leq\frac{C}{2^j}.
\end{equation}
\end{theorem}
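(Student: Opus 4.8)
\emph{Strategy.} The plan is to take for $\VV^n_j$ the discrete Lipschitz truncation of \eqref{DLTdef}, namely $\VV^n_j:=\Pi^n_{\rm{div}}\circ(\VV^n)_{\lambda^n_j}$, where $(\VV^n)_{\lambda^n_j}$ is the `continuous' truncation supplied by Theorem \ref{CLT}, and to select the level $\lambda^n_j$ by a pigeonhole argument over a geometric family of dyadic scales filling the interval $[(2^j)^{2^j},(2^{j+1})^{2^{j+1}})$. The point of this (logarithmically long) interval is that it contains at least $2^j$ dyadic scales, so a quantity that is merely bounded in $n$, once spread over these scales, must be of size $O(2^{-j})$ at one of them. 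The workhorse of the argument is the uniform modular bound
\begin{equation*}
\int_\Omega\bigl(M(\nabla\VV^n)\bigr)^{r^n(x)}\dx\le C\qquad\text{for all }n\in\mathbb{N},
\end{equation*}
which I would obtain from \eqref{main4.4} by passing to the Luxembourg norm (legitimate since $1<r^-\le r^+<\infty$), extending $\nabla\VV^n$ by zero to $\R^d$ and $r^n$ to an exponent $q^n\in\mathcal{P}^{\rm{log}}(\R^d)$ via Lemma \ref{pext}, invoking the boundedness of $M$ on $L^{q^n(\cdot)}(\R^d)$, and returning to the modular. All constants can be taken independent of $n$ because $r^n\to r$ in $C^{0,\alpha}(\overline\Omega)$ forces the log-H\"older constants $C_{\rm{log}}(r^n)$, hence $C_{\rm{log}}(q^n)$ and the operator norm of $M$ on $L^{q^n(\cdot)}$, to be uniformly bounded.

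\emph{Choice of the level and the structural estimates.} For fixed $j$ put $I_j:=\{k\in\mathbb{N}:(2^j)^{2^j}\le 2^k<(2^{j+1})^{2^{j+1}}\}$, so that $\#I_j\ge 2^j$. Interchanging sum and integral and using that, for each $x$, the geometric sum $\sum_{k:\,2^k<M(\nabla\VV^n)(x)/\kappa}(2^k)^{r^n(x)}$ (ratio $2^{r^n(x)}\ge 2^{r^-}>1$) is comparable to its largest term, one gets
\begin{equation*}
\sum_{k\in I_j}\int_{\{M(\nabla\VV^n)>\kappa 2^k\}}(2^k)^{r^n(x)}\dx\le C\int_\Omega\bigl(M(\nabla\VV^n)\bigr)^{r^n(x)}\dx\le C.
\end{equation*}
By pigeonhole there is $k^*\in I_j$ at which the summand is $\le C/\#I_j\le C/2^j$; I set $\lambda^n_j:=2^{k^*}$, which gives \eqref{main4.6}, and define $\VV^n_j$ as above. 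Then \eqref{main4.7} follows from Theorem \ref{CLT}(d), which yields $\|\nabla(\VV^n)_{\lambda^n_j}\|_\infty\le C\lambda^n_j$, together with the global $W^{1,\infty}$-stability of $\Pi^n_{\rm{div}}$ and Poincar\'e's inequality (the latter bounding the $L^\infty$-norm of $(\VV^n)_{\lambda^n_j}$ by its gradient); \eqref{main4.12} is precisely the inclusion chain $\{\VV^n_j\ne\VV^n\}\subset\Omega^n_{\lambda^n_j}(\VV^n)\subset\U_{\kappa\lambda^n_j}(\VV^n)\cap\Omega$ furnished by Lemmas \ref{DCTL1} and \ref{DLTL2}; and \eqref{main4.13} follows from \eqref{main4.7} for the first inequality (pointwise $|\nabla\VV^n_j|\le C\lambda^n_j$ on $\{\VV^n_j\ne\VV^n\}$) and from \eqref{main4.12} together with the pigeonhole bound $\int_{\{M(\nabla\VV^n)>\kappa\lambda^n_j\}}(\lambda^n_j)^{r^n(x)}\dx\le C/2^j$ for the second.

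\emph{Passage to the limit in $n$.} Fix $j$: by \eqref{main4.7} and Poincar\'e's inequality the sequence $\{\VV^n_j\}_n$ is bounded in $W^{1,\infty}(\Omega)^d$ by a constant depending only on $j$, so Banach--Alaoglu and the compact embedding $W^{1,\infty}(\Omega)\hookrightarrow\hookrightarrow C(\overline\Omega)$ yield a subsequence along which \eqref{main4.8}--\eqref{main4.10} hold with a limit $\VV_{\!j}\in W^{1,\infty}(\Omega)^d$; a diagonal procedure in $j$ produces a single subsequence good for all $j$. For \eqref{main4.11} I first record the bound $\int_\Omega|\nabla\VV^n_j|^{r^n(x)}\dx\le C$, uniform in $n$ and $j$: on the complement of $\{\VV^n_j\ne\VV^n\}$ one has $\nabla\VV^n_j=\nabla\VV^n$ a.e., so that part is controlled by \eqref{main4.4}, while the part over $\{\VV^n_j\ne\VV^n\}$ is $\le C/2^j$ by \eqref{main4.13}; hence $\|\nabla\VV^n_j\|_{r^n(\cdot)}\le C$. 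This survives the limit $n\to\infty$: writing $\|\nabla\VV_{\!j}\|_{r(\cdot)}\le 2\sup_{g\ge 0,\ \|g\|_{r'(\cdot)}\le1}\int_\Omega|\nabla\VV_{\!j}|\,g\dx$, using the weak lower semicontinuity of $f\mapsto\int_\Omega|f|\,g\dx$ (valid as $\nabla\VV^n_j\rightharpoonup\nabla\VV_{\!j}$ weakly in every $L^\sigma$), H\"older's inequality in variable-exponent form, and the elementary fact that $\|g\|_{(r^n)'(\cdot)}\to\|g\|_{r'(\cdot)}$ for a fixed bounded $g$ (a consequence of $(r^n)'\to r'$ uniformly, via dominated convergence on the modular), one passes the inequality $\int_\Omega|\nabla\VV^n_j|\,g\dx\le 2\|\nabla\VV^n_j\|_{r^n(\cdot)}\|g\|_{(r^n)'(\cdot)}$ to the limit to obtain \eqref{main4.11}. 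Finally, \eqref{vjconv}: the bound \eqref{main4.11} makes $\{\VV_{\!j}\}_j$ bounded in the reflexive space $W^{1,r(\cdot)}(\Omega)^d$, so $\VV_{\!j}\rightharpoonup\widetilde\VV$ weakly there along a subsequence; testing against $\boldsymbol\varphi\in C_c^\infty(\Omega)^{d\times d}$ and writing $\int_\Omega\nabla\VV^n_j:\boldsymbol\varphi\dx=\int_\Omega\nabla\VV^n:\boldsymbol\varphi\dx+\int_{\{\VV^n_j\ne\VV^n\}}(\nabla\VV^n_j-\nabla\VV^n):\boldsymbol\varphi\dx$, the first term tends to $\int_\Omega\nabla\VV:\boldsymbol\varphi\dx$ by \eqref{LTv}, while the second is bounded by $\|\boldsymbol\varphi\|_\infty\int_{\{\VV^n_j\ne\VV^n\}}(|\nabla\VV^n_j|+|\nabla\VV^n|)\dx$, which tends to $0$ as $j\to\infty$ \emph{uniformly in} $n$ (by H\"older's inequality, the estimate $|\{\VV^n_j\ne\VV^n\}|\le C(\kappa\lambda^n_j)^{-r^-}\to0$ obtained from \eqref{main4.12} and the workhorse bound via Chebyshev, and the uniform bound $\int_\Omega|\nabla\VV^n|^{r^-}\dx\le C+|\Omega|$ coming from \eqref{main4.4}); this identifies $\widetilde\VV=\VV$ and yields \eqref{vjconv}.

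\emph{Main obstacle.} The crux is uniformity in $n$: proving the workhorse bound $\int_\Omega M(\nabla\VV^n)^{r^n(x)}\dx\le C$ with a constant independent of $n$ requires simultaneously using the $C^{0,\alpha}$-convergence of the exponents, the variable-index extension of Lemma \ref{pext}, and the boundedness of the maximal operator in variable-exponent spaces, all the while keeping \emph{every} implicit constant under control as $n$ varies; once this is secured, the pigeonhole selection of $\lambda^n_j$ and the structural estimates \eqref{main4.6}--\eqref{main4.13} follow essentially mechanically. The second, more technical, difficulty is the passage to the limit $n\to\infty$ in the variable-exponent norm in \eqref{main4.11}, where the exponent itself moves with $n$: this is not a consequence of a naive weak lower semicontinuity argument and must be handled by duality together with the continuity of the Luxembourg norm under uniform perturbation of the exponent.
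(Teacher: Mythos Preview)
Your proposal is correct and follows essentially the same route as the paper's proof: extend the exponents via Lemma \ref{pext}, use uniform boundedness of the maximal operator on $L^{r^n(\cdot)}$ (constants controlled through the $C^{0,\alpha}$-convergence of $r^n$) to obtain the workhorse modular bound, select $\lambda^n_j$ by pigeonhole over roughly $2^j$ scales in $[(2^j)^{2^j},(2^{j+1})^{2^{j+1}})$, define $\VV^n_j$ as the discrete Lipschitz truncation \eqref{DLTdef}, and then read off \eqref{main4.7}, \eqref{main4.12}, \eqref{main4.13} from Theorem \ref{CLT} and Lemmas \ref{DCTL1}--\ref{DLTL2}, with \eqref{main4.8}--\eqref{main4.10} by compactness and a diagonal extraction.

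There are two minor technical differences worth noting. First, the paper parametrizes the scales as $\theta^i_j=(2^j)^i$ and runs the pigeonhole over \emph{disjoint annuli} $\{\kappa\theta^i_j<M(\nabla\VV^n)\le\kappa\theta^{i+1}_j\}$ for the quantity $\int M(\nabla\VV^n)^{r^n}$, then derives the level-set bound $\int_{\{M>\kappa\lambda^n_j\}}(\lambda^n_j)^{r^n}\dx\le C/2^j$ by a further splitting into the chosen annulus and the tail $\{M>\kappa 2^j\lambda^n_j\}$; your geometric-sum argument over \emph{nested} super-level sets reaches the same bound in one step and is a clean alternative. Second, for \eqref{main4.11} the paper does not use duality but instead a Fatou-type argument (as in the derivation of \eqref{maininclu}): on $\{|\nabla\VV^n_j|\ge 1\}$ one has $|\nabla\VV^n_j|^{r^n(x)}\ge|\nabla\VV^n_j|^{r(x)-\varepsilon}$ for $n$ large by uniform convergence of $r^n$, then weak lower semicontinuity in the fixed-exponent space $W^{1,r(\cdot)-\varepsilon}$ and Fatou as $\varepsilon\to0$ give the bound. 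Your duality route is equally valid; neither approach is materially simpler than the other.
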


\begin{proof}
We first extend each $\VV^n$ outside $\overline{\Omega}$ by zero and we extend each $r^n$ defined as in Lemma \ref{pext}. Then we have
\begin{alignat*}{2}
\VV^n&\rightharpoonup \VV&&\qquad{\rm{weakly}}\,\,{\rm{in}}\,\,{W^{1,r^-}(\R^d)^d},\\
r^n&\rightarrow r&&\qquad{\rm{strongly}}\,\,{\rm{in}}\,\,C^{0,\alpha}(\R^d).
\end{alignat*}
By boundedness of the maximal operator for $r^n(x)>1$, we have that
\[\|M(\nabla \VV^n)\|_{r^n(\cdot)}\leq C(n)\|\nabla \VV^n\|_{r^n(\cdot)}.\]
 Note that the constant $C(n)$ depends on $C_{\rm{log}}(r^n)$, but by the assumption $r^n\rightarrow r$ in $C^{0,\alpha}(\overline{\Omega})$, $C(n)$ can be bounded by some uniform constant $C$ independent of $n\in\mathbb{N}$. Thus directly from \eqref{main4.4}, we have
\begin{equation}\label{main4.14}
\int_{\R^d}|M(\nabla \VV^n)|^{r^n(x)}\dx\leq C.
\end{equation}
Now, for each $j\in\mathbb{N}$, define the sequence $\{\theta^i_j\}^{2^{j+1}-1}_{i=2^j}$ by
\[\theta^i_j\defeq(2^j)^i,\]
and a sequence of subsets $\{U^i_{j,n}\}^{2^{j+1}-1}_{i=2^j}$ as
\[U^i_{j,n}\defeq\{x\in\R^d:\kappa\theta^i_j<M(\nabla \VV^n)(x)\leq\kappa\theta^{i+1}_j\}.\]
Note that $U^i_{j,n}$ are mutually disjoint bounded sets, and thus
\[\sum_{i=2^j}^{2^{j+1}-1}\int_{U^i_{j,n}}|M(\nabla \VV^n)|^{r^n(x)}\dx\leq\int_{\R^d}|M(\nabla \VV^n)|^{r^n(x)}\dx\leq C.\]
By the pigeon hole principle, there exists an $i^*\in\{2^j,\ldots,2^{j+1}-1\}$ such that
\[\int_{U^{i^*}_{j,n}}|M(\nabla \VV^n)|^{r^n(x)}\dx\leq\frac{C}{2^j}.\]
Then, for this $i^*$, we set
\[\lambda^n_j\defeq\theta_j^{i^*}=(2^j)^{i^*},\]
and thus \eqref{main4.6} follows. Therefore we have
\begin{equation}\label{main4.15}
\int_{\{\kappa\lambda^n_j<M(\nabla \VV^n)\leq\kappa2^j\lambda^n_j\}}|M(\nabla \VV^n)|^{r^n(x)}\dx\leq\frac{C}{2^j}.
\end{equation}
Having such a $\lambda^n_j$, we can use \eqref{DLTdef} with $\lambda=\lambda^n_j$ applied to $\VV^n$ and thus we introduce
\[\VV^n_j\defeq \VV_{\lambda_j^n}^n.\]
Then, by Theorem \ref{CLT}, part (d), and the $W^{1,\infty}(\Omega)^d$-stability of $\Pi^n_{\rm{div}}$, we have \eqref{main4.7}. Additionally, combining Lemma \ref{DCTL1} and Lemma \ref{DLTL2} yields \eqref{main4.12}. To prove \eqref{main4.13}, we use \eqref{main4.7} and \eqref{main4.15}, and thus
\begin{align*}
\int_{\{\VV^n_j\neq \VV^n\}}|\nabla \VV^n_j|^{r^n(x)}\dx&\leq C\int_{\{\VV^n_j\neq \VV^n\}}|\kappa\lambda^n_j|^{r^n(x)}\dx\leq C\int_{\{\kappa\lambda^n_j<M(\nabla \VV^n)\}}|\kappa\lambda^n_j|^{r^n(x)}\dx\\
&=C\int_{U^{i^*}_{j,n}}|\kappa\lambda^n_j|^{r^n(x)}\dx+C\int_{\{\kappa2^j\lambda^n_j<M(\nabla \VV^n)\}}|\kappa\lambda^n_j|^{r^n(x)}\dx\\
&\leq C\int_{U^{i^*}_{j,n}}(M(\nabla \VV^n))^{r^n(x)}\dx+C\int_{\R^d}\left(\frac{M(\nabla \VV^n)}{2^j}\right)^{r^n(x)}\dx\\
&\leq\frac{C}{2^j}+\frac{C}{(2^j)^{r^-}}\int_{\R^d}(M(\nabla \VV^n))^{r^n(x)}\dx\leq\frac{C}{2^j}.
\end{align*}
By compact embedding, \eqref{main4.7}, and the fact that $\VV^n_j$ are compactly supported in $\R^d$, we can, for arbitrarily fixed $j\in\mathbb{N}$, extract a subsequence satisfying \eqref{main4.8}--\eqref{main4.10}. {\color{black}{Furthermore, by using a diagonal process, we can extract a further subsequence in $n$ such that \eqref{main4.8}--\eqref{main4.10} hold for each $j\in\mathbb{N}$.}} Finally, from \eqref{LTv}, \eqref{main4.8}, \eqref{main4.12} and H\"older's inequality, we obtain
\begin{align*}
\|\VV_j-\VV\|_1&\leq\lim_{n\rightarrow\infty}\int_{\Omega}|\VV_j-\VV^n_j|\dx+\lim_{n\rightarrow\infty}\int_{\Omega}|\VV^n_j-\VV^n|\dx+\lim_{n\rightarrow\infty}\int_{\Omega}|\VV^n-\VV|\dx\\
&=\lim_{n\rightarrow\infty}\int_{\Omega}|\VV^n_j-\VV^n|\dx\leq C\limsup_{n\rightarrow\infty}|\{\VV^n_j\neq \VV^n\}|^{\frac{1}{(r^-)'}}\\
&\leq C\limsup_{n\rightarrow\infty}|\{M(\nabla \VV^n)>\kappa\lambda^n_j\}|^{\frac{1}{(r^-)'}}\leq C\limsup_{n\rightarrow\infty}\left(\int_{\Omega}\frac{M(\nabla \VV^n)}{\kappa\lambda^n_j}\dx\right)^{\frac{1}{(r^-)'}}\\
&\leq\limsup_{n\rightarrow\infty}\frac{C}{(\lambda^n_j)^{\frac{1}{(r^-)'}}}\leq\frac{C}{(2^j)^{\frac{2^j}{(r^-)'}}}\leq \frac{C}{2^j}\qquad{\rm{for}}\,\,{\rm{sufficiently}}\,\,{\rm{large}}\,\,j\in\mathbb{N}.
\end{align*}
Consequently, we have that for a (not relabelled) subsequence, $\VV_j\rightarrow \VV$ a.e. in $\Omega$ as $j\rightarrow\infty$. So if we prove \eqref{main4.11}, by the uniqueness of the weak limit, \eqref{vjconv} follows. To prove \eqref{main4.11}, we note that
\begin{align*}
\liminf_{n\rightarrow\infty}\int_{\Omega}|\nabla \VV^n_j|^{r^n(x)}\dx
&=\liminf_{n\rightarrow\infty}\int_{\{\VV^n_j=\VV^n\}}|\nabla \VV^n|^{r^n(x)}\dx+\liminf_{n\rightarrow\infty}\int_{\{\VV^n_j\neq \VV^n\}}|\nabla \VV^n_j|^{r^n(x)}\dx\leq C
\end{align*}
which, by weak lower-semicontinuity (for the details, see the argument leading to \eqref{maininclu}) implies the bound
\[\int_{\Omega}|\nabla \VV_j|^{r(x)}\dx\leq C.\]
That completes the proof of the theorem. \end{proof}
\end{subsection}

\begin{subsection}{Uniform H\"older norm bound in two space dimensions}

When studying numerical approximations to nonlinear partial differential equations, it is often the case that, in order to prove convergence of the sequence of numerical approximations to a solution of the original problem, some \textit{a priori} knowledge about the regularity of the discrete solution is helpful. The aim of this section is to summarize some results of this type, whose continuous counterparts are well-known in the context of PDE analysis thanks to, primarily, the work of De Giorgi, Nash and Moser,
and which will be required here in order to complete the convergence analysis of the numerical method under consideration. In \cite{BS2008}, the authors formulate a Meyers type regularity estimate for the sequence of approximate solutions to a second-order linear elliptic equation obtained by a finite element method. As a corollary, by Morrey's embedding theorem, in two space dimensions at least, we will obtain a uniform bound on
a H\"older norm of the sequence of approximate solutions. We shall discuss the approximation scheme and the associated discrete De Giorgi theorem in more detail.

From the definition of the finite element space we have constructed, we know that $\Z^n\subset W^{1,\infty}_0(\Omega)$. So we can consider a conforming finite element approximation from $\Z^n$ to the weak solution $c \in W^{1,2}_0(\Omega)$ of the problem
$- \nabla \cdot (A \nabla c) = \nabla \cdot \boldsymbol{F} + h$, for $\boldsymbol{F}
\in L^p(\Omega)^d$, $h \in L^{\frac{dp}{d+p}}(\Omega)$, $p>d$, and $A\in L^\infty(\Omega)^{d\times d}$ uniformly elliptic, with the approximation $W^n \in \Z^n$
defined by:
\begin{equation}\label{M1.4}
\int_{\Omega}A(x)\nabla W^n(x)\cdot\nabla Z^n(x)\dx= -\int_\Omega \boldsymbol{F} \cdot \nabla Z^n\dx + \int_\Omega h(x) Z^n(x)\dx\qquad\forall\, Z^n\in\Z^n.
\end{equation}
An application of the Lax--Milgram theorem implies the existence of a unique solution to equation \eqref{M1.4}. {\color{black}{Moreover, as a direct consequence of Proposition 8.6.2 in \cite{BS2008} and Theorem 5.1 in \cite{Grisvald}, we have the following result.}}

\begin{theorem}\label{discreteDe}
Assume that $\Omega\subset\R^d$, $d \in \{2,3\}$, is a bounded open convex polytopal domain and
$A\in L^{\infty}(\Omega)^{d \times d}$ is uniformly elliptic. Then, there exist constants $C>0$, $n_0 \geq 1$ and $\varepsilon>0$, such that, for all $n \geq n_0$, $p\in(2,2+\varepsilon)$ and all $\boldsymbol{F}\in L^{p}(\Omega)^d$, the solution $W^n\in\Z^n$ of \eqref{M1.4} satisfies
\[\|W^n\|_{W^{1,p}(\Omega)}\leq C\left(\|\boldsymbol{F}\|_{L^{p}(\Omega)} + \|h\|_{L^{\frac{dp}{d+p}}(\Omega)}\right).\]
In particular, if $d=2$, by Morrey's embedding theorem,
we have
\[\|W^n\|_{C^{0,\alpha}(\overline{\Omega})} \leq C\left(\|\boldsymbol{F}\|_{L^{p}(\Omega)} + \|h\|_{L^{\frac{dp}{d+p}}(\Omega)}\right)\quad\mbox{with $\alpha= 1 - \frac{2}{p}\in(0,1)$}.\]
\end{theorem}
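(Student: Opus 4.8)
The plan is to derive the estimate by combining the elliptic regularity theory for the convex polytopal domain $\Omega$ with an abstract discrete Meyers-type stability bound for the finite element Galerkin projection, and then, in two space dimensions, to upgrade the resulting $W^{1,p}$ bound to a H\"older bound via Morrey's embedding theorem.

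First I would record the continuous ingredient. Since $\Omega$ is a bounded convex polytope, it has no reentrant corners or edges, so the Dirichlet Laplacian enjoys full second-order elliptic regularity, and in fact $W^{2,p}$ regularity for $p$ in a one-sided neighbourhood of $2$; this is (essentially) the content of Theorem 5.1 in \cite{Grisvald}. Combined with the continuous Meyers estimate for uniformly elliptic operators with merely bounded measurable coefficients --- whose admissible range of integrability exponents depends only on $d$ and the ellipticity ratio of $A$ --- this furnishes an $\varepsilon_0>0$ such that, for every $p\in(2,2+\varepsilon_0)$, the weak solution $c\in W^{1,2}_0(\Omega)$ of $-\nabla\cdot(A\nabla c)=\nabla\cdot\boldsymbol{F}+h$ belongs to $W^{1,p}_0(\Omega)$ and obeys $\|c\|_{W^{1,p}(\Omega)}\le C(\|\boldsymbol{F}\|_{L^{p}(\Omega)}+\|h\|_{L^{dp/(d+p)}(\Omega)})$. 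The exponent $dp/(d+p)$ is the natural one here: it equals $2d/(d+2)$ at $p=2$, and $L^{dp/(d+p)}(\Omega)$ embeds into the dual of $W^{1,p'}_0(\Omega)$ precisely because the (critical) Sobolev embedding $W^{1,p'}_0(\Omega)\hookrightarrow L^{(dp/(d+p))'}(\Omega)$ persists for $p$ slightly larger than $2$ --- which is also one of the reasons the admissible range of $p$ cannot be all of $(2,\infty)$.

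Next I would invoke the discrete counterpart, Proposition 8.6.2 in \cite{BS2008}. Under the present structural hypotheses on the family $\{\Z^n\}$ (shape-regularity, together with whatever uniformity of the triangulations that proposition requires, a point to be checked against the standing assumptions) and given the continuous $W^{1,p}$ and $W^{2,p'}$ regularity recorded above, the Galerkin projection onto $\Z^n$ is stable in $W^{1,p}$ \emph{uniformly in $n$}: there exist $C>0$, $n_0\ge1$, and, after shrinking $\varepsilon_0$ to some $\varepsilon>0$ if necessary, the property that for all $n\ge n_0$ and all $p\in(2,2+\varepsilon)$ the solution $W^n\in\Z^n$ of \eqref{M1.4} satisfies $\|W^n\|_{W^{1,p}(\Omega)}\le C(\|\boldsymbol{F}\|_{L^{p}(\Omega)}+\|h\|_{L^{dp/(d+p)}(\Omega)})$. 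The mechanism behind such an estimate is the familiar one: one starts from the $L^2$ energy bound, uses a duality argument exploiting the continuous $W^{2,p'}$ and $W^{1,p'}$ regularity on the convex polytope together with the approximation and superapproximation properties of $\Z^n$ and an inverse inequality, and bootstraps the integrability exponent from $2$ to something slightly larger (equivalently, runs a discrete reverse-H\"older/Gehring argument localised at the mesh scale); convexity of $\Omega$ is exactly what keeps this bootstrap closed. Finally, when $d=2$ one has $p>2=d$ throughout the admissible range, so Morrey's embedding theorem yields $W^{1,p}(\Omega)\hookrightarrow C^{0,\alpha}(\overline{\Omega})$ with $\alpha=1-d/p=1-2/p\in(0,1)$, and applying this to $W^n$ gives the stated H\"older bound --- a step unavailable when $d=3$, since then $p$ close to $2$ lies below $d$.

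The main obstacle is the uniform-in-$n$ discrete Meyers bound of the second step, i.e.\ showing that the $W^{1,p}$-stability constant of the Galerkin projection does not degenerate under mesh refinement; this is the real content carried by \cite{BS2008}, and it rests both on the corner/edge regularity supplied by the convexity of $\Omega$ and on a careful analysis at the mesh scale. A secondary, essentially bookkeeping, difficulty is to fix a single $\varepsilon>0$ for which the interval $(2,2+\varepsilon)$ is simultaneously admissible for (i) the continuous Meyers range associated with $A$, (ii) the discrete stability range of \cite{BS2008}, and (iii) the Sobolev embedding required to control the $h$-term; since each of these is an open constraint satisfied at $p=2$, such an $\varepsilon$ exists.
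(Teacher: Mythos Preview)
Your proposal is correct and follows essentially the same route as the paper: the paper states the theorem as a direct consequence of Proposition 8.6.2 in \cite{BS2008} (the discrete Meyers-type $W^{1,p}$ stability of the Galerkin projection) combined with Theorem 5.1 in \cite{Grisvald} (elliptic regularity on convex polytopes), with the $d=2$ H\"older bound then following from Morrey's embedding theorem. Your write-up simply unpacks the mechanism behind these citations.
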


Since we need the second inequality stated in the above theorem in the subsequent analysis, we shall henceforth restrict ourselves to the case of $d=2$, and will assume that $\Omega$ is a bounded open convex polygonal domain in $\R^2$.  Obtaining a De Giorgi type regularity result for the sequence of finite element approximations to \eqref{M1.4} is a challenging open problem in the case of $d=3$.

Once we have the above result, by standard boundary reduction argument, we can obtain a similar result for the equation \eqref{M1.4} with nonhomogeneous Dirichlet boundary datum $c_d$. Indeed, if we consider $Y^n\defeq W^n-c_d$ instead of $W^n$, we have the following definition of the approximate solution
\[\int_{\Omega}A(x)\,\nabla Y^n(x)\cdot\nabla Z^n(x)\dx= -\int_\Omega \boldsymbol{F} \cdot \nabla Z^n\dx
+ \int_\Omega h(x) Z^n(x)\dx
-\int_{\Omega}A(x)\,\nabla c_d(x)\cdot\nabla Z^n(x)\dx\]
for all $Z^n \in \mathbb{Z}^n$.
We choose $q$ such that $d=2<p\leq q<2+\varepsilon$ where $\varepsilon$ is as in Theorem \ref{discreteDe}. Then, if $\boldsymbol{F}\in L^{p}(\Omega)^d$ and $c_d\in W^{1,q}(\Omega)$, it is easy to show that
$\boldsymbol{G}:=\boldsymbol{F}+A\nabla c_d \in L^p(\Omega)^d$ again. Therefore, we have the following corollary, which will be used in the subsequent analysis.

\begin{corollary}\label{disDe}
Assume that $\Omega\subset\R^2$ is a bounded open convex polygonal domain and that $A\in L^{\infty}(\Omega)^{2 \times 2}$ is uniformly elliptic with ellipticity constant $\lambda>0$. Then, there exists a $q>2$ such that the following holds: for any $\boldsymbol{G}\in L^q(\Omega)^2$, $h\in L^{\frac{2q}{q+2}}(\Omega)$ and any $c_d\in W^{1,q}(\Omega)$, there exists a unique $W^n\in\Z^n+c_d$ such that $W^n-c_d\in\Z^n\cap C^{0,\alpha}(\overline{\Omega})$ for some $\alpha\in(0,1)$, satisfying
\[\int_{\Omega}A(x) \nabla W^n(x)\cdot\nabla Z^n(x)\dx=-\int_{\Omega}\boldsymbol{G}(x)\cdot\nabla Z^n(x)\dx+\int_{\Omega}h\,Z^n\dx\qquad\forall\, Z^n\in\Z^n,\]
and fulfilling the uniform bound
\[\|W^n\|_{W^{1,q}(\Omega)\cap C^{0,\alpha}(\overline{\Omega})}\leq C\left(\Omega,\lambda,q,\|A\|_{\infty},\|\boldsymbol{G}\|_q,\|h\|_{\frac{2q}{q+2}},\|c_d\|_{1,q}\right).\]
\end{corollary}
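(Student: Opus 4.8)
The plan is to reduce the nonhomogeneous boundary-value problem to the homogeneous one treated in Theorem~\ref{discreteDe} by a standard lifting, and then to read off the two asserted bounds from the Meyers-type estimate there together with Morrey's embedding in two dimensions. The first step is to fix $q>2$: take $q\in(2,2+\varepsilon)$, where $\varepsilon>0$ and $n_0\geq1$ are the constants supplied by Theorem~\ref{discreteDe} for the given domain $\Omega$ and matrix $A$, so that $q$ depends only on $\Omega$ and $A$ (this is also where the uniform-in-$n$ threshold $n_0$ enters the final statement).

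Given data $\boldsymbol{G}\in L^q(\Omega)^2$, $h\in L^{\frac{2q}{q+2}}(\Omega)$ and $c_d\in W^{1,q}(\Omega)$, I would look for $W^n$ in the form $W^n=Y^n+c_d$ with $Y^n\in\Z^n$. Substituting this ansatz into the equation in the statement and moving the $c_d$-contribution to the right-hand side, $Y^n$ must satisfy
\[
\int_{\Omega}A(x)\,\nabla Y^n\cdot\nabla Z^n\dx=-\int_{\Omega}\widetilde{\boldsymbol{G}}\cdot\nabla Z^n\dx+\int_{\Omega}h\,Z^n\dx\qquad\forall\,Z^n\in\Z^n,
\]
with $\widetilde{\boldsymbol{G}}\defeq\boldsymbol{G}+A\nabla c_d$. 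Since $A\in L^{\infty}(\Omega)^{2\times2}$ and $\nabla c_d\in L^q(\Omega)^2$, we have $\widetilde{\boldsymbol{G}}\in L^q(\Omega)^2$ with $\|\widetilde{\boldsymbol{G}}\|_q\leq\|\boldsymbol{G}\|_q+\|A\|_{\infty}\|c_d\|_{1,q}$, so the pair $(\widetilde{\boldsymbol{G}},h)$ is admissible in Theorem~\ref{discreteDe} with exponent $p=q$ (note that $\frac{2q}{q+2}=\frac{dq}{d+q}$ when $d=2$).

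Existence and uniqueness of $Y^n\in\Z^n$ follow from the Lax--Milgram theorem on the conforming finite-dimensional subspace $\Z^n\subset W^{1,2}_0(\Omega)$: the form $(Y,Z)\mapsto\int_{\Omega}A\nabla Y\cdot\nabla Z\dx$ is bounded and, by uniform ellipticity of $A$ together with Poincar\'e's inequality, coercive on $\Z^n$, while the right-hand side functional is bounded on $W^{1,2}_0(\Omega)$ because $\widetilde{\boldsymbol{G}}\in L^q(\Omega)^2\subset L^2(\Omega)^2$ and, in two dimensions, $W^{1,2}_0(\Omega)\hookrightarrow L^s(\Omega)$ for every $s<\infty$ (so that $\int_\Omega h\,Z$ is controlled by $\|h\|_{\frac{2q}{q+2}}\|Z\|_{1,2}$, the conjugate exponent $\frac{2q}{q-2}$ being finite). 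Hence $W^n=Y^n+c_d\in\Z^n+c_d$ is the unique solution of the equation in the corollary. Then I would invoke Theorem~\ref{discreteDe} with $p=q$ to obtain, for all $n\geq n_0$, the bound $\|Y^n\|_{W^{1,q}(\Omega)}\leq C\big(\|\widetilde{\boldsymbol{G}}\|_q+\|h\|_{\frac{2q}{q+2}}\big)$ with $C=C(\Omega,\lambda,q,\|A\|_{\infty})$, and, since $d=2$ and $q>2$, the $C^{0,\alpha}(\overline\Omega)$-bound with $\alpha=1-\frac{2}{q}$ by the second assertion of that theorem (equivalently, Morrey's embedding $W^{1,q}(\Omega)\hookrightarrow C^{0,\alpha}(\overline\Omega)$). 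In particular $Y^n=W^n-c_d\in\Z^n\cap C^{0,\alpha}(\overline\Omega)$, and adding the lift gives $\|W^n\|_{W^{1,q}(\Omega)\cap C^{0,\alpha}(\overline\Omega)}\leq\|Y^n\|_{W^{1,q}(\Omega)\cap C^{0,\alpha}(\overline\Omega)}+\|c_d\|_{W^{1,q}(\Omega)\cap C^{0,\alpha}(\overline\Omega)}$, where $c_d\in W^{1,q}(\Omega)\hookrightarrow C^{0,1-2/q}(\overline\Omega)$ contributes only through $\|c_d\|_{1,q}$; collecting dependencies yields the stated constant.

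As for the main obstacle, there is no deep difficulty here: the corollary is essentially a boundary-reduction wrapper around Theorem~\ref{discreteDe}. The one point needing care is that the Meyers exponent gap $\varepsilon$ and the threshold $n_0$ in Theorem~\ref{discreteDe} must be uniform in $n$, so that a single choice of $q$ is valid for all sufficiently large $n$; this uniformity is already built into the statement of Theorem~\ref{discreteDe}, and the only genuine bookkeeping is to track the dependence of the Meyers constant on the ellipticity constant $\lambda$ and on $\|A\|_{\infty}$ through the lifting step, together with the elementary inclusions between Lebesgue spaces on the bounded domain $\Omega$ used to place $\widetilde{\boldsymbol{G}}$ and $h$ in the correct spaces.
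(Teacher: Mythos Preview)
Your proposal is correct and follows essentially the same route as the paper: the paper's argument for this corollary is precisely the ``standard boundary reduction'' you describe, setting $Y^n=W^n-c_d$, absorbing $A\nabla c_d$ into the right-hand side flux, and then invoking Theorem~\ref{discreteDe} (with the two-dimensional Morrey embedding) for the uniform $W^{1,q}\cap C^{0,\alpha}$ bound. Your write-up is in fact more detailed than the paper's sketch, including the explicit Lax--Milgram step and the tracking of the constant's dependencies.
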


\end{subsection}

\end{section}

\begin{section}{The main theorem}
We are now ready to state and prove our main theorem. Note that because of the restriction $d=2$ in Corollary \ref{disDe}, we only consider a two-dimensional convex polygonal domain $\Omega$. Also, we need a stronger condition on $r(x)$.
\begin{theorem}\label{mainthm}
Assume that $\Omega\subset\R^2$ is a convex polygonal domain,
and $c_d\in W^{1,q}(\Omega)$ for some $q>2$. Let us assume that $r:\R_{\geq0}\rightarrow\R_{\geq0}$ is a H\"older-continuous function with $\frac{3}{2}<r^-\leq r(c)\leq r^+<2$ for all $c\in[c^-,c^+]$ and let $\boldsymbol{f}\in(W^{1,r^-}_{0}(\Omega)^2)^*$.
Let $\{\V^n,\Q^n,\Z^n\}_{n\in\mathbb{N}}$ be the sequence of finite element space triples from {\rm{Section 4.1}} and let $\{\UU^n,P^n,C^n\}_{n\in\mathbb{N}}$ be a sequence of discrete solution triples defined by the finite element approximation \eqref{Q_n1}--\eqref{Q_n3}. Then, there exists a (not relabelled) subsequence $\{\UU^n,P^n,C^n\}_{n \in \mathbb{N}}$, which converges to a weak solution $\{\uu,p,c\}$ of \eqref{eq1}--\eqref{eq3} defined in {\bf{Problem (Q)}} as $n\in\mathbb{N}$ tends to $\infty$ in the following sense:
\begin{align*}
\UU^n&\rightharpoonup \uu \qquad{\rm{weakly}}\,\,{\rm{in}}\,\, W^{1,r^-}_{0}(\Omega)^2,\\
P^n&\rightharpoonup p \qquad\,{\rm{weakly}}\,\,{\rm{in}}\,\,L^{(r^+)'}_0(\Omega),\\
C^n&\rightharpoonup c \qquad\,{\rm{weakly}}\,\,{\rm{in}}\,\,W^{1,2}(\Omega),\\
C^n&\rightarrow c \qquad\,{\rm{strongly}}\,\,{\rm{in}}\,\,C^{0,\alpha}(\overline{\Omega})\,\,\,\rm{for}\,\,\,\rm{some}\,\,\,\alpha\in(0,1).
\end{align*}
\end{theorem}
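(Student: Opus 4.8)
The plan is to pass to the limit $n\to\infty$ in the discrete system \eqref{Q_n1}--\eqref{Q_n3} using uniform a priori bounds, the discrete De Giorgi/Meyers estimate of Corollary~\ref{disDe} for the concentration, the discrete inf-sup condition of Proposition~\ref{dis_infsup} for the pressure, and the discrete Lipschitz-truncation Theorem~\ref{LTmain} together with the monotonicity \eqref{S2} for the nonlinear stress. First I would derive the basic estimates. Testing \eqref{P_n1} with $\UU^n$ and using $B_u[\UU^n,\UU^n,\UU^n]=0$, the coercivity \eqref{S3}, Young's inequality and Korn's inequality in the variable-exponent space (whose constants are uniformly controlled since $r^n\to r$ in $C^{0,\alpha}$), one gets $\|\UU^n\|_{1,r^-}\le C$, the modular bound $\int_\Omega|\DD\UU^n|^{r^n(x)}\,\dx\le C$ (hence $\|\UU^n\|_{1,r^n(\cdot)}\le C$) and, again via \eqref{S3}, $\|\SSS(C^n,\DD\UU^n)\|_{(r^n)'(\cdot)}\le C$; testing \eqref{P_n2} with $C^n-c_d\in\Z^n$ and using $B_c[C^n,\UU^n,C^n]=0$, \eqref{q1}--\eqref{q2}, the $2$D Sobolev embedding and $\tfrac32<r^-$ gives $\|C^n\|_{1,2}\le C$. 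Writing $\q_c=A^n\nabla C^n$ with $A^n:=A(C^n,\DD\UU^n)$ (uniformly elliptic, $\|A^n\|_\infty\le C_4$, ellipticity constant $C_5$ by \eqref{q1}--\eqref{q2}), equation \eqref{Q_n3} states that $C^n-c_d\in\Z^n$ is precisely the finite element solution from Corollary~\ref{disDe} with coefficient $A^n$ and frozen data $\boldsymbol G^n=-\tfrac12 C^n\UU^n$, $h^n=-\tfrac12\UU^n\cdot\nabla C^n$; by the bounds just obtained these are bounded in $L^q(\Omega)^2$ and $L^{2q/(q+2)}(\Omega)$ for some $q>2$, so Corollary~\ref{disDe} yields $\|C^n\|_{W^{1,q}(\Omega)\cap C^{0,\alpha}(\overline\Omega)}\le C$. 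Along a subsequence we then have $\UU^n\rightharpoonup\uu$ in $W^{1,r^-}_0(\Omega)^2$, $C^n\rightharpoonup c$ in $W^{1,q}(\Omega)$, and $C^n\to c$ in $C^{0,\alpha'}(\overline\Omega)$ for some $\alpha'\in(0,\alpha)$ by Arzel\`a--Ascoli, with $c-c_d\in W^{1,2}_0(\Omega)\cap C^{0,\alpha'}(\overline\Omega)$; since $r$ is H\"older-continuous, $r^n=r\circ C^n\to r\circ c=:r$ in $C^{0,\alpha''}(\overline\Omega)$, so all variable-exponent tools of Section~4 apply with uniform constants, and by weak lower-semicontinuity of $v\mapsto\int_\Omega|\nabla v|^{r^n(x)}\,\dx$ (the argument behind \eqref{main4.11}) one has $\uu\in W^{1,r(\cdot)}_0(\Omega)^2$.

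For the pressure, Proposition~\ref{dis_infsup} gives $\tfrac1\beta\|P^n\|_{(r^n)'(\cdot)}\le\sup_{\|\VV\|_{1,r^n(\cdot)}\le1}\langle\mathrm{div}\,\VV,P^n\rangle$, and \eqref{Q_n1} bounds the right-hand side by $C\|\VV\|_{1,r^n(\cdot)}$ using $\|\SSS(C^n,\DD\UU^n)\|_{(r^n)'(\cdot)}\le C$, the trilinear bound \eqref{B_uest} (licit because $\tfrac{3d}{d+2}=\tfrac32<r^-$), and $\boldsymbol f\in(W^{1,r^-}_0(\Omega)^2)^*$. Hence $\|P^n\|_{(r^n)'(\cdot)}\le C$, so $\|P^n\|_{(r^+)'}\le C$, and along a further subsequence $P^n\rightharpoonup p$ in $L^{(r^+)'}_0(\Omega)$ (mean-zero passes to the limit); a modular lower-semicontinuity argument additionally places $p$ in $L^{r'(\cdot)}_0(\Omega)$, as required by Problem~(Q).

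The heart of the proof, and the step I expect to be the main obstacle, is the almost-everywhere convergence $\DD\UU^n\to\DD\uu$, obtained via the discrete Lipschitz-truncation method in the variable-exponent setting. I would set $\ww^n:=\UU^n-\Pi^n_{\rm{div}}\uu\in\V^n_{\rm{div}}$; by \eqref{v_conv}, Lemma~\ref{pro_conv} and Proposition~\ref{sta_variable}, $\ww^n\rightharpoonup0$ weakly in $W^{1,r^-}_0(\Omega)^2$ and $\int_\Omega|\nabla\ww^n|^{r^n(x)}\,\dx\le C$. Theorem~\ref{LTmain} then yields, for each $j$, levels $\lambda^n_j$ and discrete truncations $\ww^n_j\in\V^n$ with $\|\nabla\ww^n_j\|_\infty\le C\lambda^n_j$, the smallness estimates \eqref{main4.12}--\eqref{main4.13}, and (for a subsequence in $n$) $\ww^n_j\to\ww_j$ in $L^\sigma$, $\ww_j\rightharpoonup0$ in $W^{1,r(\cdot)}$. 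Since $\ww^n_j$ need not be discretely divergence-free, I would correct it with the discrete Bogovski\u\i\ operator \eqref{define_dis_Bog}, setting $\tilde\ww^n_j:=\ww^n_j-\mathcal{B}^n(\mathrm{div}\,\ww^n_j)\in\V^n_{\rm{div}}$; since $\mathrm{div}\,\ww^n_j$ is supported in $\{\ww^n_j\neq\ww^n\}$, estimates \eqref{Bog_est}, \eqref{main4.13} and Proposition~\ref{Bog_conv} show that this correction is small in $W^{1,r^n(\cdot)}(\Omega)^2$ uniformly in $n$ and vanishes weakly as $n\to\infty$. Testing \eqref{P_n1} with $\tilde\ww^n_j$, splitting $\DD\tilde\ww^n_j=\DD\ww^n-\DD(\mathcal{B}^n\mathrm{div}\,\ww^n_j)+(\DD\ww^n_j-\DD\ww^n)$, and using that $\ww^n_j=\ww^n$ off the small bad set, that $\Pi^n_{\rm{div}}\uu\to\uu$ strongly in $W^{1,r^-}$, the boundedness of $\SSS(C^n,\DD\UU^n)$ in $L^{(r^n)'(\cdot)}$, the compactness of the convective term (the embedding $W^{1,r^-}(\Omega)^2\hookrightarrow\hookrightarrow L^\sigma$, $\sigma<6$, again via $r^->\tfrac32$), and the monotonicity \eqref{S2}, I would arrive at
\[
\limsup_{n\to\infty}\int_\Omega\big(\SSS(C^n,\DD\UU^n)-\SSS(C^n,\DD\uu)\big)\cdot\big(\DD\UU^n-\DD\uu\big)\,\chi_{\{\ww^n_j=\ww^n\}}\,\dx\le\delta_j,
\]
where $\delta_j\to0$ as $j\to\infty$. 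Combining this with the uniform modular control on the bad set \eqref{main4.13} and the exponent-$(<1)$ trick of Chac\'on's biting lemma, the nonnegative integrand $\big(\SSS(C^n,\DD\UU^n)-\SSS(C^n,\DD\uu)\big)\cdot(\DD\UU^n-\DD\uu)$ tends to $0$ in $L^\theta_{\rm{loc}}(\Omega)$ for some $\theta\in(0,1)$, whence, for a further subsequence, $\DD\UU^n\to\DD\uu$ a.e.\ in $\Omega$. The delicate points are the uniform control of the variable-exponent constants, the estimates for the Bogovski\u\i\ correction near the bad set, and the bookkeeping of the double limit ($n\to\infty$, then $j\to\infty$).

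Once $\DD\UU^n\to\DD\uu$ and $C^n\to c$ a.e., continuity of $\SSS$ and of $A(\cdot,\cdot)$ together with the uniform bounds $\|\SSS(C^n,\DD\UU^n)\|_{(r^+)'}\le C$ and $\|A^n\|_\infty\le C_4$ give $\SSS(C^n,\DD\UU^n)\rightharpoonup\SSS(c,\DD\uu)$ in $L^{(r^+)'}(\Omega)$; and since $(A^n-A)\nabla C^n\to0$ in $L^2$ (dominated convergence for $A^n-A$, using the $L^q$-bound on $\nabla C^n$ with $q>2$) while $A\nabla C^n\rightharpoonup A\nabla c$ in $L^2$, we get $\q_c(C^n,\nabla C^n,\DD\UU^n)\rightharpoonup\q_c(c,\nabla c,\DD\uu)$ in $L^2(\Omega)^2$. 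Finally I would pass to the limit in \eqref{Q_n1}--\eqref{Q_n3} by inserting the finite element approximations $\Pi^n_{\rm{div}}\boldsymbol\psi\to\boldsymbol\psi$, $\Pi^n_{\Q}q\to q$, $\Pi^n_{\Z}\varphi\to\varphi$ (from \eqref{v_conv}, \eqref{q_conv}, \eqref{z_conv}) of arbitrary $\boldsymbol\psi\in W^{1,\infty}_0(\Omega)^2$, $q\in L^{s}(\Omega)$, $\varphi\in W^{1,2}_0(\Omega)$: \eqref{Q_n2} yields $\mathrm{div}\,\uu=0$, hence $\uu\in W^{1,r(c)}_{0,\rm{div}}(\Omega)^2$; in the momentum and concentration equations the diffusive terms pass by weak$\times$strong convergence, the pressure term because $\langle\mathrm{div}\,\Pi^n_{\rm{div}}\boldsymbol\psi,P^n\rangle=\langle\mathrm{div}\,\boldsymbol\psi,P^n\rangle\to\langle\mathrm{div}\,\boldsymbol\psi,p\rangle$, and $B_u[\UU^n,\UU^n,\cdot]$, $B_c[C^n,\UU^n,\cdot]$ by the strong $L^\sigma$-convergence of $\UU^n$ and $C^n$; using $\mathrm{div}\,\uu=0$ and \eqref{Bfree} (with a density argument) converts $B_u[\uu,\uu,\cdot]$, $B_c[c,\uu,\cdot]$ back into $-\int(\uu\otimes\uu)\cdot\nabla(\cdot)$ and $-\int c\uu\cdot\nabla(\cdot)$, so that $(\uu,p,c)$ solves Problem~(Q). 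Nonnegativity of $c$ is inherited from that of $c_d$ and of the discrete solutions in the limit; if needed, $r$ is understood to be extended to a globally H\"older function with the same bounds $r^\pm$.
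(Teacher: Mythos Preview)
Your overall architecture is correct and closely parallels the paper's: uniform energy bounds, the discrete Meyers/De~Giorgi estimate for $C^n$, the discrete inf--sup bound for $P^n$, a Lipschitz-truncation/monotonicity argument for $\DD\UU^n$, and identification of the weak limits. The passages to the limit in the linear terms, the pressure, and the concentration equation are handled essentially as in the paper.

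There is, however, a genuine gap in your compactness step. You apply Theorem~\ref{LTmain} to $\ww^n:=\UU^n-\Pi^n_{\rm div}\uu$ and invoke Proposition~\ref{sta_variable} to claim $\int_\Omega|\nabla\ww^n|^{r^n(x)}\,\dx\le C$. That proposition only gives $\int_\Omega|\nabla\Pi^n_{\rm div}\uu|^{r^n(x)}\,\dx\le C\int_\Omega|\nabla\uu|^{r^n(x)}\,\dx+o(1)$, so you need $\int_\Omega|\nabla\uu|^{r^n(x)}\,\dx$ bounded uniformly in $n$. But the a~priori information on the fixed limit $\uu$ is only $\int_\Omega|\nabla\uu|^{r(c(x))}\,\dx<\infty$ (your \eqref{maininclu}-type bound), and since $r^n=r\circ C^n$ merely converges uniformly to $r\circ c$ without $r^n\le r\circ c$, the modular $\int_\Omega|\nabla\uu|^{r^n(x)}\,\dx$ need not be finite, let alone uniformly bounded. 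The same obstruction reappears when you try to pair $\SSS(C^n,\DD\UU^n)\in L^{(r^n)'(\cdot)}$ with $\DD\Pi^n_{\rm div}\uu-\DD\uu$ (only known to vanish in $L^{r^-}$), or to control $\SSS(C^n,\DD\uu)$ via \eqref{S1}. In the constant-exponent setting (as in \cite{DKS2013}) your subtraction trick is standard and works; it is precisely the $n$-dependence of the exponent that breaks it here.

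The paper circumvents this by \emph{not} subtracting $\Pi^n_{\rm div}\uu$. Instead it applies Theorem~\ref{LTmain} to $\UU^n$ itself and replaces $\DD\uu$ by the matrix truncation $T_\chi(\DD\uu)$, which is bounded, so that $\SSS(C^n,T_\chi(\DD\uu))$ is uniformly in $L^\infty$ and all the variable-exponent pairings are harmless; the divergence correction is $\boldsymbol{\Psi}^n_j=\mathcal{B}^n(\mathrm{div}\,\UU^n_j)$, and a triple limit $n\to\infty$, $j\to\infty$, $\chi\to\infty$ closes the argument, yielding $\int_\Omega\big((\SSS(C^n,\DD\UU^n)-\SSS(C^n,\DD\uu))\cdot(\DD\UU^n-\DD\uu)\big)^{1/4}\,\dx\to0$. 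Your approach can be repaired along these lines (or by first replacing $\uu$ by a smooth approximation and adding one more limit), but as written the modular bound you need is not available.
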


\begin{subsection}{Convergence of the finite element approximations}
{\color{black}{As a first step in the proof of our main theorem, we pass to the limit in the sequence of solution triples and show the existence of a weak limits for the sequences in question.}}
 First, we test with $\UU^n$ in \eqref{Q_n1} and then thanks to \eqref{Q_n2} and \eqref{Bfree}, we have
 \[\int_{\Omega}\SSS(C^n,\DD\UU^n)\cdot \DD\UU^n\dx=\langle \boldsymbol{f},\UU^n\rangle.\]
 By using \eqref{S3}, duality estimates, Young's inequality and Korn's inequality, we obtain
\begin{equation}\label{UE1}
\int_{\Omega}|\nabla \UU^n|^{r(C^n)}+|\SSS(C^n,\DD\UU^n)|^{r'(C^n)}\dx\leq C_1,
\end{equation}
where $C_1$ is independent of $n$.

 Next, we test with $C^n-c_d$ in \eqref{Q_n3}, and note that by \eqref{Bfree} we have
\[\int_{\Omega}\q_c(C^n,\nabla C^n,\DD\UU^n)\cdot\nabla C^n\, \dx=\int_{\Omega}\q_c(C^n,\nabla C^n,\DD\UU^n)\cdot\nabla c_d\dx+B_c[C^n,\UU^n,c_d].\]
By \eqref{q1}, \eqref{q2}, H\"older's inequality and Young's inequality,
\begin{align*}
\|\nabla C^n\|_2^2&\leq C\int_{\Omega}|\nabla C^n|\,|\nabla c_d|\dx+B_c[C^n,\UU^n,c_d]\\
&\leq\varepsilon\|\nabla C^n\|^2_2+C(\varepsilon)\|\nabla c_d\|^2_2+B_c[C^n,\UU^n,c_d].
\end{align*}
By integration by parts, Sobolev embedding, H\"older's inequality and Young's inequality,
\begin{align*}
B_c[C^n,\UU^n,c_d]
&=\frac{1}{2}\int_{\Omega}c_d\UU^n\cdot\nabla C^n\dx-\frac{1}{2}\int_{\Omega}C^n\UU^n\cdot\nabla c_d\dx\\
&=\frac{1}{2}\int_{\Omega}c_d\UU^n\cdot\nabla C^n\dx+\frac{1}{2}\int_{\Omega}{\rm{div}}\,(C^n\UU^n)c_d\dx\\
&=\int_{\Omega}c_d\UU^n\cdot\nabla C^n\dx+\frac{1}{2}\int_{\Omega}C^n({\rm{div}}\,\UU^n)c_d\dx\\
&\leq \|c_d\|_{\infty}\|\UU^n\|_2\|\nabla C^n\|_2+\frac{\|c_d\|_{\infty}}{2}\|C^n\|_{\frac{r^-}{r^--1}}\|{\rm{div}}\,\UU^n\|_{r^-}\\
&\leq C\|\UU^n\|_{1,r^-}\|\nabla C^n\|_2+C\|\UU^n\|_{1,r^-}\|\nabla C^n\|_{\frac{2r^-}{3r^--2}}\\
&\leq C(\varepsilon)\|\UU^n\|_{1,r^-}^2+\varepsilon\|\nabla C^n\|_2^2.
\end{align*}
Therefore, by \eqref{q1} and \eqref{UE1}, we have
\begin{equation}\label{UE2}
\int_{\Omega}|\nabla C^n|^2+|\q_c(C^n,\nabla C^n,\DD\UU^n)|^2\dx \leq C_2,
\end{equation}
where $C_2$ is independent of $n$.

Now, by Sobolev embedding and the uniform estimates \eqref{UE1} and \eqref{UE2}, we have for sufficiently large $t>0$ and for $q>2$ sufficiently close to $2$,
\[\|C^n\UU^n\|^q_q\leq\|C^n\|^q_t\|\UU^n\|^q_{\frac{tq}{t-q}}\leq C\|C^n\|^q_{1,2}\|\UU^n\|^q_{1,r^-}\leq C.\]
Also if we set $s\defeq\frac{2q}{q+2}$, for $q>2$ sufficiently close to $2$,
\[\|\nabla C^n\cdot \UU^n\|^s_s\leq\|\nabla C^n\|^s_2\|\UU^n\|^s_{\frac{2s}{2-s}}\leq\|C^n\|^s_{1,2}\|\UU^n\|^s_q\leq C\|C^n\|^s_{1,2}\|\UU^n\|^s_{1,r^-}\leq C.\]
Then we can apply Corollary \ref{disDe} with $g=C^n\UU^n$ and $h=\nabla C^n\cdot \UU^n$. Hence for some $\alpha \in (0,1)$, we obtain the following uniform bound, independent of $n\in\mathbb{N}$:
\begin{equation}\label{UE3}
\|C^n\|_{C^{0,\alpha}(\overline{\Omega})}\leq C_3.
\end{equation}

Since $C^{0,\alpha}(\overline{\Omega})$ is compactly embedded in $C^{0,\tilde{\alpha}}(\overline{\Omega})$ for all $\tilde{\alpha} \in (0,\alpha)$, we have that
\[C^n\rightarrow c \qquad{\rm{strongly}}\,\,{\rm{in}}\,\,C^{0,\tilde{\alpha}}(\overline{\Omega}),\]
which implies that
\[r\circ C^n\rightarrow r\circ c \qquad{\rm{strongly}}\,\,{\rm{in}}\,\,C^{0,\beta}(\overline{\Omega})\]
for some $ \beta \in (0,1)$. We can therefore apply Proposition \ref{dis_infsup} with $r^n(x)\defeq r\circ C^n(x)$. By \eqref{Q_n1}, \eqref{B_uest} and H\"older's inequality,
\begin{align*}
\|P^n\|_{(r^n)'(\cdot)}&\leq C\sup_{0\neq \VV\in\V^n,\|\VV\|_{1,r^n(\cdot)}\leq1}\langle{\rm{div}}\,\VV,P^n\rangle\\
&\leq C\sup_{0\neq \VV\in\V^n,\|\VV\|_{1,r^n(\cdot)}\leq1}\bigg|\int_{\Omega}\SSS(C^n,\DD\UU^n)\cdot \DD \VV\dx+B_u[\UU^n,\UU^n,\VV]-\langle \boldsymbol{f},\VV\rangle\bigg|\\
&\leq C\sup_{0\neq \VV\in\V^n,\|\VV\|_{1,r^n(\cdot)}\leq1}\bigg(\|\SSS(C^n,\DD\UU^n)\|_{(r^n)'(\cdot)}\|\DD \VV\|_{r^n(\cdot)}+\|\UU^n\|^2_{1,r^n(\cdot)}\|\VV\|_{1,r^n(\cdot)}\\
&\qquad\qquad\qquad\qquad\qquad\,\,\,\,\,\,+\|\boldsymbol{f}\|_{(W^{1,r^-}_0(\Omega)^2)^*}\|\VV\|_{1,r^n(\cdot)}\bigg).
\end{align*}
Therefore, by \eqref{UE1}, we have
\begin{equation}\label{UE4}
\|P^n\|_{(r^n)'(\cdot)}\leq C_4,
\end{equation}
where $C_4$ is independent of $n\in\mathbb{N}$.

Using the bounds \eqref{UE1}--\eqref{UE4}, thanks to their independence of $n\in\mathbb{N}$, reflexivity of the relevant spaces and compact Sobolev embedding, we can extract (not relabelled) subsequences such that
\begin{align}
\UU^n & \rightharpoonup \uu && {\rm{weakly}}\,\,{\rm{in}}\,\,W^{1,r^-}_{0}(\Omega)^2, \label{conv1}\\
\UU^n & \rightarrow \uu && {\rm{strongly}}\,\,{\rm{in}}\,\,L^{2(1+\varepsilon)}(\Omega)^2,\,\,(\varepsilon>0),\label{conv2}\\
C^n & \rightharpoonup c && {\rm{weakly}}\,\,{\rm{in}}\,\,W^{1,2}(\Omega),\label{conv3}\\
C^n & \rightarrow c && {\rm{strongly}}\,\,{\rm{in}}\,\,C^{0,\tilde{\alpha}}(\overline{\Omega}),\label{conv4}\\
P^n & \rightharpoonup p && {\rm{weakly}}\,\,{\rm{in}}\,\,L^{(r^+)'}(\Omega),\label{conv5}\\
\SSS(C^n,\DD\UU^n) & \rightharpoonup\bar{\SSS} && {\rm{weakly}}\,\,{\rm{in}}\,\,L^{(r^+)'}(\Omega)^{2\times 2},\label{conv6}\\
\q_c(C^n,\nabla C^n,\DD\UU^n) & \rightharpoonup \bar{\q}_c && {\rm{weakly}}\,\,{\rm{in}}\,\,L^2(\Omega)^2.\label{conv7}
\end{align}

Before proceeding, we shall prove that the limit function $\uu$ is contained in the desired space $W^{1,r(c)}_{0}(\Omega)^d$. Since $C^n\rightarrow c$ in $C^{0,\tilde{\alpha}}(\overline{\Omega})$, and by the continuity of $r$,
\[\forall\,\varepsilon>0,\,\,\,\exists N\in\mathbb{N}\,\,\,{\rm{such}}\,\,\,{\rm{that}}\,\,\,n\geq N\,\,\,{\rm{implies}}\,\,\,|r(C^n)-r(c)|<\frac{\varepsilon}{\theta},\]
where $\theta>1$ is large enough to satisfy $r(c)-\frac{\theta+1}{\theta}\varepsilon>1.$ We can then deduce from the estimate above that
\begin{align*}
C & \geq\int_{\Omega}|\nabla \UU^n|^{r(C^n)}\dx\geq\int_{|\nabla \UU^n|\geq1}|\nabla \UU^n|^{r(C^n)}\dx\\
&\geq\int_{|\nabla \UU^n|\geq1}|\nabla \UU^n|^{r(C^n)-r(c)+r(c)-\varepsilon}\dx\geq
\int_{|\nabla \UU^n|\geq1}|\nabla \UU^n|^{r(c)-\frac{\theta+1}{\theta}\varepsilon}\dx.
\end{align*}
Then, after adding to the inequality the term $\int_{|\nabla \UU^n|<1}|\nabla \UU^n|^{r(c)-\frac{\theta+1}{\theta}\varepsilon}\dx$, which is bounded by some constant $\bar{C}\leq|\Omega|$, we obtain
\[C+\bar{C}\geq\int_{\Omega}|\nabla \UU^n|^{r(c)-\frac{\theta+1}{\theta}\varepsilon}\dx.\]
Again, we can extract a (not relabelled) subsequence such that
\[\UU^n\rightharpoonup \uu\,\,\,{\rm{weakly}}\,\,\,{\rm{in}}\,\,\,W_0^{1,r(c)-\frac{\theta+1}{\theta}\varepsilon}(\Omega)^2.\]
Thus by using the weak lower-semicontinuity of the norm function, we see that
\[\int_{\Omega}|\nabla \uu|^{r(c)-\frac{\theta+1}{\theta}\varepsilon}\dx\leq C,\]
and consequently, Fatou's Lemma with $\varepsilon\rightarrow0$ leads us to
\begin{equation}\label{maininclu}
\int_{\Omega}|\nabla \uu|^{r(c)} \dx\leq C,
\end{equation}
which implies that $\uu\in W^{1,r(c)}_{0}(\Omega)^2$ by Poincar\'e's inequality.
With the same argument as above we can also show that
\begin{equation}\label{maininclu2}
\int_{\Omega}|\bar{\SSS}|^{r'(c)}+|p|^{r'(c)}\dx\leq C.
\end{equation}

Next, we prove that the limit $\uu$ is also exactly divergence-free. Let us consider an arbitrary but fixed $q\in C^{\infty}_0(\Omega)$. Then, by \eqref{Q_n2},
\begin{align*}
0&=\int_{\Omega}(\Pi^n_{\Q}q)\,{\rm{div}}\,\UU^n\dx\\
&=\int_{\Omega}(\Pi^n_{\Q}q-q)\,{\rm{div}}\,\UU^n\dx+\int_{\Omega}q({\rm{div}}\,\UU^n-{\rm{div}}\,\uu)\dx+\int_{\Omega}q\,{\rm{div}}\,\uu\dx.
\end{align*}
The first term tends to zero by \eqref{q_conv}, \eqref{UE1} and the second term converges to zero by \eqref{conv1}. Therefore,
\[\int_{\Omega}q\,{\rm{div}}\,\uu\dx=0\qquad{\rm{for}}\,\,\,{\rm{any}}\,\,\,q\in C^{\infty}_0(\Omega),\]
which implies that ${\rm{div}}\, \uu = 0$ a.e. on $\Omega$. In this case, we can identify the limit of the convective term $B_u[\cdot,\cdot,\cdot]$ as follows. Let us choose an arbitrary function $\vv\in W^{1,\infty}_0(\Omega)^2$ for which we define $\VV^n\defeq\Pi^n_{\rm{div}}\vv\in\V^n$. Then, by \eqref{v_conv}, we have
\begin{equation}\label{VV_n}
\VV^n\rightarrow \vv\qquad{\rm{strongly}}\,\,\,{\rm{in}}\,\,\,W^{1,\sigma}_0(\Omega)^2\,\,\,{\rm{for}}\,\,\,\sigma\in[1,\infty).
\end{equation}
Also, by the restriction $r^->1$, we have the continuous embedding $W^{1,r^n(\cdot)}_0(\Omega)^2\hookrightarrow L^{2(1+\varepsilon)}(\Omega)^2$. Therefore, by \eqref{UE1} and \eqref{conv2},
\[\UU^n\otimes \UU^n\rightarrow \uu\otimes \uu\qquad{\rm{strongly}}\,\,\,{\rm{in}}\,\,\,L^{1+\varepsilon}(\Omega)^2.\]
This then enables us to identify the second part of the convective term
\[-\int_{\Omega}(\UU^n\otimes \UU^n)\cdot\nabla \VV^n\dx\rightarrow-\int_{\Omega}(\uu\otimes \uu)\cdot\nabla \vv\dx\qquad{\rm{as}}\,\,\,n\rightarrow\infty.\]

On the other hand, for $r^->\frac{4}{3}$, we have the continuous embedding $W^{1,r^n(\cdot)}_0(\Omega)^2\hookrightarrow L^{(r^-)'+\varepsilon}(\Omega)^2$; thus $\UU^n\cdot \VV^n\rightarrow \uu\cdot \vv$ strongly in $L^{(r^-)'}(\Omega)^2$. Indeed,
\begin{align*}
\|\UU^n\cdot \VV^n-\uu\cdot \vv\|_{(r^-)'}
&\leq\|(\VV^n-\vv)\UU^n+(\UU^n-\uu)\vv\|_{(r^-)'}\\
&\leq\|\VV^n-\vv\|_s\|\UU^n\|_{(r^-)'+\varepsilon}+\|\UU^n-\uu\|_{(r^-)'+\varepsilon}\|\vv\|_s\\
&\leq\|\VV^n-\vv\|_s\|\UU^n\|_{1,r^n(\cdot)}+\|\UU^n-\uu\|_{\frac{2r^-}{2-r^-}-\varepsilon}\|\vv\|_s
\end{align*}
for some $s\in(1,\infty)$. The first term tends to zero thanks to \eqref{v_conv}, \eqref{UE1} and the second term converges to zero by \eqref{conv1} in conjunction with a compact embedding theorem. Therefore, together with ${\rm{div}}\,\uu=0$, we have
\begin{align*}
\int_{\Omega}(\UU^n\otimes \VV^n)\cdot\nabla \UU^n\dx
&=-\int_{\Omega}(\UU^n\otimes \UU^n)\cdot\nabla \VV^n\dx+\int_{\Omega}({\rm{div}}\,\UU^n)\,\UU^n\cdot \VV^n\dx\\
&\rightarrow-\int_{\Omega}(\uu\otimes \uu)\cdot\nabla \vv\dx\qquad{\rm{as}}\,\,\,n\rightarrow\infty.
\end{align*}
Collecting these limits, we then deduce that
\begin{equation}\label{convectionconv}
\lim_{n\rightarrow\infty}B_u[\UU^n,\UU^n,\VV^n]=-\int_{\Omega}(\uu\otimes \uu)\cdot\nabla  \vv\dx.
\end{equation}

Now, we are ready to pass to the limit in the first equation. By linearity of the projection operator $\Pi^n_{\rm{div}}$ and by noting \eqref{Q_n1}, we obtain that
\begin{align*}
\langle{\rm{div}}\,\vv,P^n\rangle
&=\langle{\rm{div}}\,\VV^n,P^n\rangle+\langle{\rm{div}}\,(\vv-\VV^n),P^n\rangle\\
&=\int_{\Omega}\SSS(C^n,\DD\UU^n)\cdot \DD\VV^n \dx-\langle \boldsymbol{f},\VV^n\rangle+B_u[\UU^n,\UU^n,\VV^n]\\
&\,\,\,\,\,\,+\langle{\rm{div}}\,(\vv-\VV^n),P^n\rangle\\
&\rightarrow\int_{\Omega}\bar{\SSS}\cdot \DD\vv+{\rm{div}}(\uu\otimes \uu)\cdot \vv\dx-\langle \boldsymbol{f},\vv\rangle,
\end{align*}
where we have used \eqref{conv5}, \eqref{conv6}, \eqref{VV_n} and \eqref{convectionconv}. Also, by \eqref{conv5} again,
\[\langle{\rm{div}}\,\vv,P^n\rangle\rightarrow\langle{\rm{div}}\,\vv,p\rangle.\]
Altogether, we have
\begin{equation}\label{limiteq1}
\int_{\Omega}\bar{\SSS}\cdot \DD \vv+{\rm{div}}\,(\uu\otimes \uu)\cdot \vv\dx-\langle{\rm{div}}\,\vv,p\rangle=\langle \boldsymbol{f},\vv\rangle\qquad\forall\, \vv\in W^{1,\infty}_0(\Omega)^2.
\end{equation}
We note that by using the same argument as above we have that
\begin{equation}\label{limiteq1-2}
\int_{\Omega}\bar{\SSS}\cdot \DD \vv+{\rm{div}}\,(\uu\otimes \uu)\cdot \vv\dx=\langle \boldsymbol{f},\vv\rangle\qquad\forall\, \vv\in W^{1,\infty}_{0,{\rm{div}}}(\Omega)^2.
\end{equation}

Now, let us investigate the limit of the equation for the concentration, \eqref{Q_n3}. We fix an arbitrary $z\in W^{1,2}_0(\Omega)$ and define $Z^n\defeq\Pi^n_{\Z}z\in\Z^n$. Thanks to \eqref{conv2} and \eqref{conv4},
\begin{align*}
\|C^n\UU^n-c\uu\|_2
&\leq\|(C^n-c)\UU^n\|_2+\|c(\UU^n-\uu)\|_2\\
&\leq\|C^n-c\|_{\infty}\|\UU^n\|_{2(1+\varepsilon)}+\|c\|_{\infty}\|\UU^n-\uu\|_{2(1+\varepsilon)}\rightarrow0.
\end{align*}
Also, by \eqref{z_conv}, \eqref{conv2} and Sobolev embedding,
\begin{align*}
\|Z^n\UU^n-z\uu\|_2
&\leq\|(Z^n-z)\UU^n\|_2+\|z(\UU^n-\uu)\|_2\\
&\leq\|Z^n-z\|_{\frac{2(1+\varepsilon)}{\varepsilon}}\|\UU^n\|_{2(1+\varepsilon)}
+\|z\|_{\frac{2(1+\varepsilon)}{\varepsilon}}\|\UU^n-\uu\|_{2(1+\varepsilon)}\\
&\leq C\|Z^n-z\|_{1,2}\|\UU^n\|_{2(1+\varepsilon)}+C\|z\|_{1,2}\|\UU^n-\uu\|_{2(1+\varepsilon)}\rightarrow0.
\end{align*}
In other words,
\begin{align}
C^n\UU^n&\rightarrow c\uu\qquad{\rm{strongly}}\,\,\,{\rm{in}}\,\,\,L^2(\Omega)^2,\label{CU}\\
Z^n\UU^n&\rightarrow z\uu\qquad{\rm{strongly}}\,\,\,{\rm{in}}\,\,\,L^2(\Omega)^2.\label{ZU}
\end{align}
By \eqref{conv3} and \eqref{ZU},
\begin{align*}
&\bigg|\int_{\Omega}Z^n\UU^n\cdot\nabla C^n\dx-\int_{\Omega}z\uu\cdot\nabla c\dx\bigg|\\
&\qquad\leq\int_{\Omega}|Z^n\UU^n-z\uu||\nabla C^n|\dx+\bigg|\int_{\Omega}z\uu\cdot(\nabla C^n-\nabla c)\dx\bigg|\rightarrow0.
\end{align*}
Hence, because ${\rm{div}}\,\uu=0$ a.e. on $\Omega$, we have that
\[\int_{\Omega}Z^n\UU^n\cdot\nabla C^n\dx\rightarrow\int_{\Omega}z\uu\cdot\nabla c\dx=-\int_{\Omega}c\uu\cdot\nabla z\dx\qquad{\rm{as}}\,\,\,n\rightarrow\infty.\]
Additionally, by \eqref{z_conv} and \eqref{CU},
\begin{align*}
&\,\,\,\,\,\,\,\,\bigg|\int_{\Omega}C^n\UU^n\cdot\nabla Z^n\dx-\int_{\Omega}c\uu\cdot\nabla z\dx\bigg|\\
&\qquad \leq\|C^n\UU^n\|_2\|Z^n-z\|_{1,2}+\|C^n\UU^n-c\uu\|_2\|z\|_{1,2}\rightarrow0.
\end{align*}
Altogether, we have
\[\lim_{n\rightarrow\infty} B_c[C^n,\UU^n,Z^n]=-\int_{\Omega}c\uu\cdot\nabla z\dx.\]
Finally, from \eqref{z_conv} and \eqref{conv7}, we have
\[\int_{\Omega}\q_c(C^n,\nabla C^n,\DD\UU^n)\cdot\nabla Z^n\dx\rightarrow \int_{\Omega}\bar{\q}_c\cdot\nabla z\dx\qquad{\rm{as}}\,\,\,n\rightarrow\infty.\]
By collecting the limits of the two terms, we then have that
\begin{equation}\label{limiteq2}
\int_{\Omega}\bar{\q}_c\cdot\nabla z-c\uu\cdot\nabla z\dx=0\qquad\forall\, z\in W^{1,2}_0(\Omega).
\end{equation}
We see from \eqref{limiteq1} and \eqref{limiteq2} that all that remains to be shown is the identification of the limits:
\[\bar{\SSS}=\SSS(c,\DD\uu)\,\,\,\,\,{\rm{and}}\,\,\,\,\,\bar{\q}_c=\q_c(c,\nabla c, \DD\uu).\]
\end{subsection}

\begin{subsection}{Compactness of $\DD\UU^n$}
Our proof of the identification of the limits begins by showing the compactness of $\DD\UU^n$ in the sense that
\[\lim_{n\rightarrow\infty}\int_{\Omega}\left((\SSS(C^n,\DD\UU^n)-\SSS(C^n,\DD\uu))\cdot(\DD\UU^n-\DD\uu)\right)^{\frac{1}{4}}\dx=0.\]
By \eqref{S1}, \eqref{S2}, \eqref{UE1}, \eqref{maininclu} and H\"older's inequality, we see that
\begin{equation}\label{limsup}
0\leq\limsup_{n\rightarrow\infty}\int_{\Omega}\left((\SSS(C^n,\DD\UU^n)-\SSS(C^n,\DD\uu))\cdot(\DD\UU^n-\DD\uu)\right)^{\frac{1}{4}}\dx=L<\infty.
\end{equation}
Hence, it is enough to show that $L=0$. For arbitrary fixed $\chi>0$, define
\[\Omega_{\chi}\defeq\{x\in\Omega:|\DD\uu|>\chi\}.\]
Then by \eqref{maininclu}, we have
\[|\Omega_{\chi}|\leq\int_{\Omega}\frac{|\DD\uu|}{\chi}\dx\leq\frac{C}{\chi}.\]
Now we decompose the integral
\begin{equation}\label{decompose}
\int_{\Omega}\left((\SSS(C^n,\DD\UU^n)-\SSS(C^n,\DD\uu))\cdot(\DD\UU^n-\DD\uu)\right)^{\frac{1}{4}}\dx=A(n,\chi)+B(n,\chi),
\end{equation}
where
\begin{align*}
A(n,\chi)&\defeq\int_{\Omega_{\chi}}\left((\SSS(C^n,\DD\UU^n)-\SSS(C^n,\DD\uu))\cdot(\DD\UU^n-\DD\uu)\right)^{\frac{1}{4}}\dx,\\
B(n,\chi)&\defeq\int_{\Omega\setminus\Omega_{\chi}}\left((\SSS(C^n,\DD\UU^n)-\SSS(C^n,\DD\uu))\cdot(\DD\UU^n-\DD\uu)\right)^{\frac{1}{4}}\dx.
\end{align*}
First, by \eqref{S1}, \eqref{UE1}, \eqref{maininclu} and H\"older's inequality,
\[A(n,\chi)\leq C|\Omega_{\chi}|^{\frac{1}{2}}\leq\frac{C}{\sqrt{\chi}}.\]
Next, we introduce a matrix-truncation function $T_{\chi}:\R^{2\times 2}\rightarrow\R^{2\times 2}$ as

\begin{displaymath}
T_{\chi}(\boldsymbol{M})=\left\{ \begin{array}{ll}
\boldsymbol{M} &\textrm{for $|\boldsymbol{M}|\leq\chi$,}\\
\chi\frac{\boldsymbol{M}}{|\boldsymbol{M}|}& \textrm{for $|\boldsymbol{M}|>\chi$.}
\end{array}\right.
\end{displaymath}
Since $T_{\chi}(\DD\uu)=\DD\uu$ on $\Omega\setminus\Omega_{\chi}$ and the integrand is positive, we can rewrite $B(n,\chi)$ as
\begin{align*}
B(n,\chi)&=\int_{\Omega\setminus\Omega_{\chi}}\left((\SSS(C^n,\DD\UU^n)-\SSS(C^n,T_{\chi}(\DD\uu)))\cdot(\DD\UU^n-T_{\chi}(\DD\uu))\right)^{\frac{1}{4}}\dx\\
&\leq\int_{\Omega}\left((\SSS(C^n,\DD\UU^n)-\SSS(C^n,T_{\chi}(\DD\uu)))\cdot(\DD\UU^n-T_{\chi}(\DD\uu))\right)^{\frac{1}{4}}\dx.
\end{align*}

Since $r$ is a H\"older-continuous function and $C^n$ satisfies \eqref{conv4}, we can apply Theorem \ref{LTmain}. Therefore, for any $j\in\mathbb{N}$, we can find $\UU^n_j\in\V^n\subset W^{1,\infty}_0(\Omega)^2$. Then, by H\"older's inequality,
\begin{align*}
&B(n,\chi)
\leq \left(\int_{\{\UU^n_j=\UU^n\}}\left(\SSS(C^n,\DD\UU^n)-\SSS(C^n,T_{\chi}(\DD\uu)))\cdot(\DD\UU^n-T_{\chi}(\DD\uu)\right)\dx\right)^{\frac{1}{4}}|\Omega|^{\frac{3}{4}}\\
&\quad+\left(\int_{\{\UU^n_j\neq \UU^n\}}\left((\SSS(C^n,\DD\UU^n)-\SSS(C^n,T_{\chi}(\DD\uu)))\cdot(\DD\UU^n-T_{\chi}(\DD\uu))\right)^{\frac{1}{2}}\dx\right)^{\frac{1}{2}}|\{\UU^n_j\neq \UU^n\}|^{\frac{1}{2}}\\
&\qquad\quad\,\eqdef(B_j(n,\chi))^{\frac{1}{4}}|\Omega|^{\frac{3}{4}}+(\tilde{B}_j(n,\chi))^{\frac{1}{2}}|\{\UU^n_j\neq \UU^n\}|^{\frac{1}{2}}.
\end{align*}
First, by \eqref{main4.6}, \eqref{main4.12} and \eqref{main4.14}, we have
\[|\{\UU^n_j\neq \UU^n\}|=\|\chi_{\{\UU^n_j\neq \UU^n\}}\|_{L^1(\Omega)}\leq\int_{\R^2}\frac{M(\DD\UU^n)}{\kappa\lambda^n_j}\dx\leq\frac{C}{(2^j)^{2^j}},\]
and thus it follows from \eqref{UE1}, \eqref{maininclu} and H\"older's inequality that
\[(\tilde{B}_j(n,\chi))^{\frac{1}{2}}|\{\UU^n_j\neq \UU^n\}|^{\frac{1}{2}}\leq\frac{C}{2^j}.\]
Next, we can rewrite $B_j(n,\chi)$ as
\begin{align}
B_j(n,\chi)&=\int_{\Omega}(\SSS(C^n,\DD\UU^n)-\SSS(C^n,T_{\chi}(\DD\uu)))\cdot(\DD\UU^n_j-T_{\chi}(\DD\uu))\dx\label{Bint1}\\
&\,\,\,\,\,\,-\int_{\{\UU^n_j\neq \UU^n\}}(\SSS(C^n,\DD\UU^n)-\SSS(C^n,T_{\chi}(\DD\uu)))\cdot(\DD\UU^n_j-T_{\chi}(\DD\uu))\dx.\label{Bint2}
\end{align}
By \eqref{S1}, \eqref{main4.7}, \eqref{main4.13}, H\"older's inequality and Young's inequality, we can analyze the second term, appearing in \eqref{Bint2}:
\begin{align*}
&\bigg|\int_{\{\UU^n_j\neq \UU^n\}}(\SSS(C^n,\DD\UU^n)-\SSS(C^n,T_{\chi}(\DD\uu)))\cdot(\DD\UU^n_j-T_{\chi}(\DD\uu))\dx\bigg|\\
&\leq\int_{\{\UU^n_j\neq \UU^n\}}|\SSS(C^n,\DD\UU^n)\cdot \DD\UU^n_j|\dx+C(\chi)\int_{\{\UU^n_j\neq \UU^n\}}(|\SSS(C^n,\DD\UU^n)|+|\DD\UU^n_j|+1)\dx\\
&\leq C\int_{\{\UU^n_j\neq \UU^n\}}|\nabla \UU^n|^{r^n(x)-1}\lambda^n_j\dx+C(\chi)|\{\UU^n_j\neq \UU^n\}|^{\frac{1}{r^+}}+\frac{C(\chi)}{2^j}\\
&\leq\frac{C}{(r^+)'}\int_{\{\UU^n_j\neq \UU^n\}}|\nabla \UU^n|^{r^n(x)}\dx+\frac{C}{r^-}\int_{\{\UU^n_j\neq \UU^n\}}|\lambda^n_j|^{r^n(x)}\dx+\frac{C(\chi)}{2^j}\\
&\leq\frac{C(\chi)}{2^j}.
\end{align*}

Now, to analyze the first term \eqref{Bint1} above, we have to use the weak formulation. {\color{black}{Here, however, we cannot use the Lipschitz truncation $\UU^n_j$ as a test function, as it is not guaranteed to be discretely divergence-free. To overcome this difficulty, we shall define discretely divergence-free approximations with zero trace with the help of the discrete Bogovski\u{\i} operator; more precisely, let}}
\begin{align*}
\boldsymbol{\Psi}^n_j&\defeq\mathcal{B}^n({\rm{div}}\,\UU^n_j),\\
\boldsymbol{\Phi}^n_j&\defeq \UU^n_j-\boldsymbol{\Psi}^n_j.
\end{align*}
It is then clear that $\boldsymbol{\Phi}^n_j$ has a zero trace on $\partial\Omega$ and, by construction, $\boldsymbol{\Phi}^n_j\in\V^n_{\rm{div}}$. Moreover, from the compact embedding $W^{1,\sigma}_0(\Omega)\hookrightarrow\hookrightarrow L^{\sigma}(\Omega)$, \eqref{main4.9} and Lemma \ref{Bog_conv}, we have
\begin{align}
\boldsymbol{\Phi}^n_j&\rightharpoonup \UU_j-\mathcal{B}({\rm{div}}\,\UU_j)\eqdef\boldsymbol{\Phi}_j&&{\rm{weakly}}\,\,{\rm{in}}\,\, W^{1,\sigma}_0(\Omega)^2,\label{main5.17}\\
\boldsymbol{\Phi}^n_j&\rightarrow\boldsymbol{\Phi}_j&&{\rm{strongly}}\,\,{\rm{in}}\,\,L^{\sigma}(\Omega)^2,\label{main5.18}
\end{align}
as $n\rightarrow\infty$, where $\sigma\in(1,\infty)$ is arbitrary. We can then rewrite \eqref{Bint1} above in terms of this approximation to obtain
\begin{align*}
&\int_{\Omega}(\SSS(C^n,\DD\UU^n)-\SSS(C^n,T_{\chi}(\DD\uu)))\cdot(\DD\UU^n_j-T_{\chi}(\DD\uu))\dx\\
&=\int_{\Omega}\SSS(C^n,\DD\UU^n)\cdot(\DD\boldsymbol{\Phi}^n_j+\DD\boldsymbol{\Psi}^n_j)\dx\\
&\,\,\,\,\,\,-\int_{\Omega}\SSS(C^n,\DD\UU^n)\cdot T_{\chi}(\DD\uu)\dx-\int_{\Omega}\SSS(C^n,T_{\chi}(\DD\uu))\cdot(\DD\UU^n_j-T_{\chi}(\DD\uu))\dx\\
&\eqdef B^{n,1}_{\chi,j}-B^{n,2}_{\chi,j}-B^{n,3}_{\chi,j}.
\end{align*}
Now we use \eqref{P_n1} with $\VV=\boldsymbol{\Phi}^n_j\in\V^n_{\rm{div}}$ and pass to the limit with \eqref{conv2}, \eqref{conv6}, and \eqref{main5.17}; thus we have, by \eqref{limiteq1-2}, that
\begin{align*}
\lim_{n\rightarrow\infty}\int_{\Omega}\SSS(C^n,\DD\UU^n)\cdot \DD\boldsymbol{\Phi}^n_j\dx
&=-\lim_{n\rightarrow\infty}B_u[\UU^n,\UU^n,\boldsymbol{\Phi}^n_j]+\lim_{n\rightarrow\infty}\langle \boldsymbol{f},\boldsymbol{\Phi}^n_j\rangle\\
&=\int_{\Omega}(\uu\otimes \uu)\cdot\nabla\boldsymbol{\Phi_j}\dx+\langle \boldsymbol{f},\boldsymbol{\Phi_j}\rangle\\
&=\int_{\Omega}\bar{\SSS}\cdot \DD\boldsymbol{\Phi_j}\dx.
\end{align*}

Let us now consider the second integral in $B^{n,1}_{\chi,j}$. Using the boundedness of $\SSS(C^n,\DD\UU^n)$ in $L^{r'(C^n)}(\Omega)^{2\times 2}$, we can estimate it by H\"older's inequality as follows:
\[\int_{\Omega}\SSS(C^n,\DD\UU^n)\cdot \DD\boldsymbol{\Psi}^n_j\dx
\leq C\|\DD\boldsymbol{\Psi}^n_j\|_{r^n(\cdot)}\leq C\|\Pi^n_{\rm{div}}\mathcal{B}\mathcal{K}({\rm{div}}\,\UU^n_j)\|_{1,r^n(\cdot)}.\]

By \eqref{Bog_est}, and Theorem \ref{contibog},
\[\|\mathcal{B}\mathcal{K}({\rm{div}}\,\UU^n_j)\|_{1,r^n(\cdot)}\leq C\|\mathcal{K}({\rm{div}}\,\UU^n_j)\|_{r^n(\cdot)}\leq C\sup_{Q\in\Q^n,\,\,\|Q\|_{(r^n)'(\cdot)}\leq1}\langle{\rm{div}}\,\UU^n_j\,Q\rangle.\]
We deduce, by H\"older's inequality, that
\begin{align*}
\langle{\rm{div}}\,\UU^n_j,Q\rangle
&=\sum_{E\subset\{\UU^n_j=\UU^n\}}\langle{\rm{div}}\,\UU^n,\chi_EQ\rangle
+\sum_{E\cap\{\UU^n_j\neq \UU^n\}\neq\emptyset}\langle{\rm{div}}\,\UU^n_j,\chi_EQ\rangle\\
&\leq\norm{{\rm{div}}\,\UU^n_j\chi_{S_{\{\UU^n_j\neq \UU^n\}}}}_{r^n(\cdot)}\norm{\sum_{E\cap\{\UU^n_j\neq \UU^n\}\neq\emptyset}\chi_EQ}_{(r^n)'(\cdot)}\\
&\leq\norm{\nabla \UU^n_j\chi_{S_{\{\UU^n_j\neq \UU^n\}}}}_{r^n(\cdot)}\norm{\sum_{E\cap\{\UU^n_j\neq \UU^n\}\neq\emptyset}\chi_EQ}_{(r^n)'(\cdot)},
\end{align*}
where $\chi_{S_{\{\UU^n_j\neq \UU^n\}}}$ is the characteristic function of the set
\[{S_{\{\UU^n_j\neq \UU^n\}}}\defeq\bigcup\left\{S_E:E\in\G_n\,\,\,{\rm{such}}\,\,\,{\rm{that}}\,\,\,E\cap\overline{\{\UU^n_j\neq \UU^n\}}\neq\emptyset\right\}.\]
Then, by Lemma \ref{DLTL2} and \eqref{main4.13},
\[\norm{\nabla \UU^n_j\chi_{S_{\{\UU^n_j\neq \UU^n\}}}}_{r^n(\cdot)}\leq\frac{C}{2^{j/r^+}}.\]
Also, by Theorem \ref{local-global},
\begin{align*}
\norm{\sum_{E\cap\{\UU^n_j\neq \UU^n\}\neq\emptyset}\chi_EQ}_{(r^n)'(\cdot)}
&\leq C\norm{\sum_{E\cap\{\UU^n_j\neq \UU^n\}\neq\emptyset}\chi_E\frac{\|\chi_EQ\|_{(r^n)'(\cdot)}}{\|\chi_E\|_{(r^n)'(\cdot)}}}_{(r^n)'(\cdot)}\\
&\leq C\norm{\sum_{E\in\G_n}\chi_E\frac{\|\chi_EQ\|_{(r^n)'(\cdot)}}{\|\chi_E\|_{(r^n)'(\cdot)}}}_{(r^n)'(\cdot)}\\
&\leq C\norm{\sum_{E\in\G_n}\chi_EQ}_{(r^n)'(\cdot)}\\
&\leq C \|Q\|_{(r^n)'(\cdot)}.
\end{align*}
Therefore, we have
\[\|\mathcal{B}\mathcal{K}({\rm{div}}\,\UU^n_j)\|_{1,r^n(\cdot)}\leq \frac{C}{2^{j/r^+}},\]
which implies, together with Proposition \ref{sta_variable}, that
\[\|\Pi^n_{\rm{div}}\mathcal{B}\mathcal{K}({\rm{div}}\,\UU^n_j)\|_{1,r^n(\cdot)}\leq
\left(\frac{C}{2^{j/r^+}}+ C\max_{E\in\G_n}h_E^{d+1} \right)^{\gamma}\]
for some $\gamma=\gamma(r^-,r^+)>0$.

Now, note further that by weak lower-semicontinuity and boundedness of $\bar{\SSS}$ in $L^{r'(c)}$,
\begin{align}
\int_{\Omega}\bar{\SSS}\cdot \DD\mathcal{B}({\rm{div}}\,\UU_j)\dx
&\leq C\|\mathcal{B}({\rm{div}}\,\UU_j)\|_{1,r(c)}
\leq C\limsup_{n\rightarrow\infty}\|\mathcal{B}^n({\rm{div}}\,\UU^n_j)\|_{1,r^n(\cdot)}\leq \left(\frac{C}{2^{j/r^+}}\right)^{\gamma}.\label{second_conv}
\end{align}
For the last two integrals $B^{n,2}_{\chi,j}$ and $B^{n,3}_{\chi,j}$, we use \eqref{main4.9}, \eqref{conv4}, \eqref{conv6} and the boundedness of the truncation $T_{\chi}$ to get
\[\lim_{n\rightarrow\infty}\left(B^{n,2}_{\chi,j}+B^{n,3}_{\chi,j}\right)=\int_{\Omega}
\bar{\SSS}\cdot T_{\chi}(\DD\uu)\dx+\int_{\Omega}\SSS(c,T_{\chi}(\DD\uu))\cdot(\DD\UU_j-T_{\chi}(\DD\uu))\dx.\]
Altogether, we have
\begin{align*}
&\lim_{n\rightarrow\infty}\left(B^{n,1}_{\chi,j}-B^{n,2}_{\chi,j}-B^{n,3}_{\chi,j}\right)
\leq\int_{\Omega}\bar{\SSS}\cdot \DD\boldsymbol{\Phi}_j\dx+\left(\frac{C}{2^{j/r^+}}\right)^{\gamma}-\lim_{n\rightarrow\infty}\left(B^{n,2}_{\chi,j}+B^{n,3}_{\chi,j}\right)\\
&\quad=\int_{\Omega}\bar{\SSS}\cdot \DD\UU_j\dx-\int_{\Omega}\bar{\SSS}\cdot \DD\mathcal{B}({\rm{div}}\,\UU_j)\dx+\left(\frac{C}{2^{j/r^+}}\right)^{\gamma}-\lim_{n\rightarrow\infty}\left(B^{n,2}_{\chi,j}+B^{n,3}_{\chi,j}\right)\\
&\quad\leq\int_{\Omega}(\bar{\SSS}-\SSS(c,T_{\chi}(\DD\uu)))\cdot(\DD\UU_j-T_{\chi}(\DD\uu))\dx+\left(\frac{C}{2^{j/r^+}}\right)^{\gamma}.
\end{align*}

Going back to \eqref{decompose}, we finally let $\chi$, $j\rightarrow\infty$ and $n\rightarrow\infty$, and estimate
\begin{align*}
&\lim_{\chi\rightarrow\infty}\lim_{j\rightarrow\infty}\lim_{n\rightarrow\infty}(A(n,\chi)+B(n,\chi))\\
&\quad\leq\lim_{\chi\rightarrow\infty}\lim_{j\rightarrow\infty}\lim_{n\rightarrow\infty}
\left(C\left(B^{n,1}_{\chi,j}-B^{n,2}_{\chi,j}-B^{n,3}_{\chi,j}+\frac{C(\chi)}{2^j}\right)^{\frac{1}{4}}|\Omega|^{\frac{3}{4}}+\frac{C}{\sqrt{\chi}}+\frac{C}{2^j}\right)\\
&\quad\leq\lim_{\chi\rightarrow\infty}C\left(\left(\int_{\Omega}(\bar{\SSS}-\SSS(c,T_{\chi}(\DD\uu)))\cdot(\DD\uu-T_{\chi}(\DD\uu))\dx\right)^{\frac{1}{4}}+\frac{C}{\sqrt{\chi}}\right)=0,
\end{align*}
where we have used \eqref{vjconv} for $j\rightarrow\infty$ and the pointwise convergence of $T_{\chi}(\DD\uu)\rightarrow \DD\uu$ on $\Omega$ with the dominated convergence theorem for $\chi\rightarrow\infty$. We have thereby completed the proof of the desired compactness of $\DD\UU^n$.
\end{subsection}

\begin{subsection}{Identification of $\bar{\SSS}=\SSS(c,\DD\uu)$ and $\bar{\q}_c=\q_c(c,\nabla c,\DD\uu)$}
In the previous section we showed that
\begin{equation}\label{cptness}
\lim_{n\rightarrow\infty}\int_{\Omega}\left((\SSS(C^n,\DD\UU^n)-\SSS(C^n,\DD\uu))\cdot(\DD\UU^n-\DD\uu)\right)^{\frac{1}{4}}\dx=0.
\end{equation}
Since the integrand is nonnegative, \eqref{cptness} also holds for a set $Q_{\gamma}\subset\Omega$ where
\[Q_{\gamma}\defeq\{x\in\Omega:|\DD\uu|\leq\gamma\},\]
with an arbitrarily fixed constant $\gamma>0$. From the sequence of integrands of \eqref{cptness}, we can extract a subsequence (again not relabelled), which converges to zero almost everywhere in $Q_{\gamma}$. Then, by Egoroff's theorem, for arbitrary $\varepsilon>0$, we can find a set $Q^{\varepsilon}_{\gamma}\subset\Omega$ such that $|Q_{\gamma}\setminus Q^{\varepsilon}_{\gamma}|<\varepsilon$, where the sequence of integrands converges uniformly. It is obvious that, thanks to the choice of $Q^{\varepsilon}_{\gamma}$, we have
\[\lim_{\gamma\rightarrow\infty}\lim_{\varepsilon\rightarrow0}|\Omega\setminus Q^{\varepsilon}_{\gamma}|=0,\]
and furthermore, from the uniform convergence, we have
\begin{equation}\label{main5.21}
\lim_{n\rightarrow\infty}\int_{Q^{\varepsilon}_{\gamma}}(\SSS(C^n,\DD\UU^n)-\SSS(C^n,\DD\uu))\cdot (\DD\UU^n-\DD\uu)\dx=0.
\end{equation}
Thanks to the boundedness of $\DD\uu$ on $Q^{\varepsilon}_{\gamma}$, by the dominated convergence theorem, we have $\SSS(C^n,\DD\uu)\rightarrow \SSS(c,\DD\uu)$ strongly in $L^q(\Omega)^{2\times 2}$ for any $q\in[1,\infty)$.
Thus, together with the above $L^q$-convergence and weak convergence \eqref{conv1}, from \eqref{main5.21}, we have
\[\lim_{n\rightarrow\infty}\int_{Q^{\varepsilon}_{\gamma}}\SSS(C^n,\DD\UU^n)\cdot(\DD\UU^n-\DD\uu)\dx=0.\]
Hence, by the boundedness of $\DD\uu$ on $Q^{\varepsilon}_{\gamma}$ and the convergence result \eqref{conv5}, we have
\begin{equation}\label{main5.22}
\lim_{n\rightarrow\infty}\int_{Q^{\varepsilon}_{\gamma}}\SSS(C^n,\DD\UU^n)\cdot \DD\UU^n\dx=\int_{Q^{\varepsilon}_{\gamma}}\bar{\SSS}\cdot \DD\uu\dx.
\end{equation}
Now, let $\boldsymbol{B}\in L^{\infty}(Q^{\varepsilon}_{\gamma})^{2\times 2}$ be arbitrarily fixed. By the monotonicity assumption \eqref{S2},
\begin{equation}\label{main5.23}
0\leq\int_{Q^{\varepsilon}_{\gamma}}(\SSS(C^n,\DD\UU^n)-\SSS(C^n,\boldsymbol{B}))\cdot(\DD\UU^n-\boldsymbol{B})\dx.
\end{equation}
Thus, from \eqref{main5.22}, the $L^q$-convergence of $\SSS(C^n,\boldsymbol{B})\rightarrow \SSS(c,\boldsymbol{B})$ and the weak convergence \eqref{conv1}, we have
\begin{align*}
0&\leq\lim_{n\rightarrow\infty}\int_{Q^{\varepsilon}_{\gamma}}(\SSS(C^n,\DD\UU^n)-\SSS(C^n,\boldsymbol{B}))\cdot(\DD\UU^n-\boldsymbol{B})\dx\\
&=\int_{Q^{\varepsilon}_{\gamma}}\bar{\SSS}\cdot(\DD\uu-\boldsymbol{B})\dx-\int_{Q^{\varepsilon}_{\gamma}}\SSS(c,\boldsymbol{B})\cdot(\DD\uu-\boldsymbol{B})\dx\\
&=\int_{Q^{\varepsilon}_{\gamma}}(\bar{\SSS}-\SSS(c,\boldsymbol{B}))\cdot(\DD\uu-\boldsymbol{B})\dx.
\end{align*}
Now we use Minty's trick. First, choose $\boldsymbol{B}=\DD\uu\pm\lambda \boldsymbol{A}(x)$ with $\lambda>0$ and $\boldsymbol{A}\in L^{\infty}(Q^{\varepsilon}_{\gamma})^{2\times 2}$. Then, passing to the limit $\lambda\rightarrow0$, the continuity of $\SSS$ gives us
\[\int_{Q^{\varepsilon}_{\gamma}}(\bar{\SSS}-\SSS(c,\DD\uu))\cdot \boldsymbol{A}(x)\dx=0.\]
Therefore, we have
\[\bar{\SSS}=\SSS(c,\DD\uu)\,\,\,{\rm{a.e.}}\,\,\,{\rm{on}}\,\,\,Q^{\varepsilon}_{\gamma}.\]
So now we let $\varepsilon\rightarrow0$ and then $\gamma\rightarrow\infty$ to conclude that
\[\bar{\SSS}=\SSS(c,\DD\uu)\,\,\,{\rm{a.e.}}\,\,\,{\rm{on}}\,\,\,\Omega.\]

Finally, since $\SSS$ is strictly monotonic and $C^n\rightarrow c$ in $C^{0,\tilde{\alpha}}(\overline{\Omega})$, from \eqref{cptness} we have
\begin{equation}\label{a.e.conv}
\DD\UU^n\rightarrow \DD\uu\,\,\,{\rm{a.e.}}\,\,\,{\rm{on}}\,\,\,\Omega.
\end{equation}
As a continuous linear operator preserves weak convergence, by the dominated convergence theorem with \eqref{conv3}, \eqref{conv4} and \eqref{a.e.conv}, we can deduce that
\[\q_c(C^n,\nabla C^n,\DD\UU^n)\rightharpoonup \q_c(c,\nabla c, \DD\uu)\qquad{\rm{weakly}}\,\,{\rm{in}}\,\,L^2(\Omega)^2.\]
Therefore, by the uniqueness of the weak limit, we can identify
\[\bar{\q}_c=\q_c(c,\nabla c,\DD\uu),\]
thus completing the proof of the convergence of the finite element method under consideration to a weak solution of the problem.
\end{subsection}
\end{section}

\begin{section}{Conclusion}
We have established the convergence of finite element approximations to a chemically reacting incompressible non-Newtonian fluid flow model in a two-dimensional convex polygonal domain. The model consists of a convection-diffusion equation for the concentration and a generalized Navier--Stokes equation, where the viscosity depends on the shear-rate and the concentration. Our key technical tools included discrete counterparts of the Bogovski\u{\i} operator, De Giorgi's regularity theorem and the Acerbi--Fusco Lipschitz truncation of Sobolev functions, which were used in combination with a variety of results in variable-exponent Lebesgue and Sobolev spaces.

An interesting direction for future research is the extension of the results obtained herein to unsteady models, including both the proof of the existence of a weak solution to the unsteady model, and the convergence of a fully discrete approximation to the model based on a mixed finite element discretization with respect to the spatial variables. A nontrivial open problem is the extension of the two-dimensional discrete De Giorgi estimate to the case of three space dimensions. The argument used here in the case of two space dimensions relied on a discrete counterpart of Meyers' regularity theorem in conjunction with Morrey's embedding theorem.
This kind of argument for deriving a uniform H\"older norm bound on the sequence of approximate
solutions to the concentration equation is specific to the case of $d=2$.
The extension of the analysis developed here to the case of $d=3$ space dimensions is the subject of ongoing research
and will be discussed in a forthcoming paper, \cite{KS17}.
\end{section}

\section*{Acknowledgements}
Seungchan Ko's work was supported by the UK Engineering and Physical Sciences Research Council [EP/L015811/1]. Endre S\"uli is grateful to the Ne\v{c}as Center for Mathematical Modeling at the Faculty of Mathematics and Physics of the Charles University in Prague for the stimulating research environment during his sabbatical leave.

\bibliography{references}
\bibliographystyle{abbrv}


\end{document}